\theoremstyle{plain}
\newtheorem{theorem}{Theorem}[section]
\newtheorem{proposition}[theorem]{Proposition}
\newtheorem{lemma}[theorem]{Lemma}
\newtheorem{corollary}[theorem]{Corollary}
\newtheorem{claim}[theorem]{Claim}
\theoremstyle{definition}
\newtheorem{definition}[theorem]{Definition}
\newtheorem{remark}[theorem]{Remark}
\newcommand{\Z}{\mathbb{Z}}
\newcommand{\R}{\mathbb{R}}
\newcommand{\E}{\mathbb{E}}
\newcommand{\Pro}{\mathbb{P}}
\def\<#1{\langle #1\rangle}
\newcommand{\T}{\mathbb{T}}
\newcommand{\cit}{c_{\mathrm{FE}}}
\author{Vincent Tassion\thanks{ETH Zürich, Rämistrasse 101, 8092 Zürich, Switzerland, vincent.tassion@math.ethz.ch} \quad Hugo Vanneuville\thanks{CNRS and Institut Fourier (Université Grenoble Alpes), 100 rue des mathématiques, 38402 Saint-Martin-d'Hères, France, hugo.vanneuville@univ-grenoble-alpes.fr}
}
\title{Noise sensitivity of crossings for high temperature Ising model}
\date{}
\begin{document}

\maketitle

\abstract{Consider the event that there is a $+$ crossing from left to right in a box for the Ising model on the triangular lattice. We show that this event is noise sensitive under Glauber dynamics $t \mapsto \sigma_t$ in the subcritical regime $\beta<\beta_c$. We rely on the non-spectral approach from our previous work \cite{TV23}. An important aspect in this more general setup is the study of the pair $(\sigma_0,\sigma_t)$ and in particular the establishment of properties such as finite-energy and spatial mixing.}

\tableofcontents

\section{Introduction}

\subsection{Noise sensitivity}\label{ssec_noise}

The concept of noise sensitivity was introduced by Benjamini, Kalai and Schramm in \cite{BKS99} for  critical planar Bernoulli  percolation. In this model,  each site of the triangular lattice is first colored  black or white independently with probability $1/2$. Then, the resulting configuration evolves in time by resampling each site at rate $1$.  
They considered the event that a large rectangle is crossed from left to right and proved that this event  is noise sensitive:  for every fixed $t>0$ (even very small), its occurrence at time $0$ and time $t$ are asymptotically independent. A refinement of  the result was later obtained by Schramm and Steif \cite{SS10} using randomized algorithms. Then, Garban, Pete and Schramm \cite{GPS10} obtained a precise description of the phenomenon by identifying the  amount of noise implying asymptotic independence.


\smallskip

Noise sensitivity has also been proven for other planar critical percolation models: Bernoulli percolation under exclusion dynamics \cite{BGS13,GV19a}, dynamical Bernoulli percolation in a Liouville environment \cite{GHSS19}, Poisson Boolean percolation \cite{ABGM14}, Voronoi percolation \cite{AGMT16,AB18,Van19} and percolation of nodal lines \cite{GV19b}. We refer to the book \cite{GS14} for more about noise sensitivity and its motivations.
All of these results rely on a study of the dynamics via Fourier methods. In  \cite{TV23}, we gave a new  proof of the sharp noise  sensitivity theorem of \cite{GPS10} by relying on differential inequalities, inspired by the work of Kesten \cite{Kes87} on near-critical percolation. One motivation was to provide a more robust approach than the Fourier method. In the present paper, we apply it to high temperature Ising model evolving under Glauber dynamics.

\subsection{Noise sensitivity for high temperature Ising model}\label{ssec:1.2}

Consider the regular planar triangular lattice and let $\T$ denote the set of sites of this lattice, that is, $\T = \{n+me^{i\pi/3} : n,m \in \Z\}$. If $x,y \in \T$, we write $x \sim y$ to say that $\{x,y\}$ is an edge of the triangular lattice, i.e.\ $|x-y|=1$, where $|\cdot|$ is the Euclidean norm.
Let $n\ge 1$ and $x\in \T$. We consider the rhombus
\[
\Lambda_n = \big\{ k+me^{i\pi/3}\in \T: k,m \in \{-n,\dots,n\}\big\}.
\]
If $x \in \T$, we let $\Lambda_n(x)=x+\Lambda_n$ be the rhombus centered at $x$. When we say rhombus in this paper, we always refer to a set of this type. 
 Given a rhombus $\Lambda$ and some $\delta>0$, we let $\Lambda^{(\delta)}$ denote the `thickened rhombus' whose center is the same as $\Lambda$ and whose sides have been scaled by factor $1+\delta$ (with the convention that here and in all the paper we consider the ceil of the new size of sides to work with integer-valued sides).
 
\medskip

Let
\[
\Omega_n=\{-1,+1\}^{\Lambda_n}
\]
denote the set of all spin configurations in $\Lambda_n$.

\begin{definition}[Ising measure with free boundary conditions]\label{defi:ising}
Let $n\ge 1$ and let $E(\Lambda_n)$ denote the set of edges whose endpoints are both in $\Lambda_n$. The energy of a spin configuration $\eta\in\Omega_n$ is
\[
H(\eta)=-\sum_{\{x,y\} \in E(\Lambda_n)} \eta(x)\eta(y).
\]
Given some $\beta\in(0,+\infty)$, the Ising measure at inverse temperature $\beta$ (with free boundary conditions and no external field) on $\Lambda_n$ is the probability measure $\mu_n=\mu_{n,\beta}$ on $\Omega_n$ defined by
\[
\forall \eta\in\Omega_n \quad \mu_n(\eta)=\frac{1}{Z_n}\exp\big(-\beta H(\eta)\big),
\]
where $Z_n=Z_{n,\beta}$ is the renormalizing constant $\sum_{\eta\in\Omega_n}\exp(-\beta H(\eta))$.
\end{definition}
In this paper, we prove noise sensitivity properties of crossing events, that we define now.
\begin{definition}
For every $n \ge 1$, we let $\mathsf{Cross}_n \subset \Omega_{2n}$ denote the event that there is a path included in $\Lambda_n$ from the left side of this rhombus to its right side, that is only made of $+1$ spins.  We study the crossing of $\Lambda_n$ for spin configurations in the larger rhombus $\Lambda_{2n}$ to avoid   boundary effects.
\end{definition}
Let $\beta\in(0,+\infty)$, $n\ge 1$ and let $\sigma\sim \mu_{2n}=\mu_{2n,\beta}$. By symmetry of the rhombus and of the measure $\mu_{2n}$ and by self-duality, we have
\[
\Pro\big(\sigma \in \mathsf{Cross}_n \big) = \mu_{2n}\big(\mathsf{Cross}_n\big)= \frac{1}{2}.
\]
We consider a dynamics $(\sigma_s)_{s \ge 0}$ defined by sampling $\sigma_0=\sigma\sim\mu_n$  and letting it evolve according to Glauber resampling dynamics: we associate Poisson clocks to every site and, every time a clock rings, the value of the corresponding spin is resampled  (under the law of this spin conditionally on the value of the neighbouring spins). This dynamics is defined more precisely in Section \ref{ssec:setup}. 
The main result of our paper is that the crossing events in the Ising model at inverse temperature $\beta<\beta_c$ are sensitive under Glauber dynamics (see the appendix for the definition of~$\beta_c$).  
\begin{theorem}[Noise sensitivity]\label{thm:noise}
Let $\beta < \beta_c$. For any $t>0$,
\[
\Pro \big(\sigma,\sigma_t \in \mathsf{Cross}_n\big) \underset{n\rightarrow+\infty}{\longrightarrow} 1/4,
\]
where $(\sigma_s)_{s \ge 0}$ is the Glauber dynamics in $\Lambda_{2n}$ (with free boundary conditions), at inverse temperature $\beta$ and starting from $\sigma\sim \mu_{2n}$ (as defined in Section \ref{ssec:setup}).
\end{theorem}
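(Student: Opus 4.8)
The plan is to follow the non-spectral strategy of \cite{TV23}, which replaces Fourier analysis by differential inequalities, and to adapt it to the Glauber dynamics on the pair $(\sigma_0,\sigma_t)$. The starting point is to view $(\sigma_0,\sigma_t)$ as a single configuration whose law is a coupling of two copies of $\mu_{2n}$; concretely, $\sigma_t$ is obtained from $\sigma_0$ by updating, at rate $1$, each site using the conditional Ising law given its neighbours. The key analytic quantity is the correlation $\mathrm{Cov}(\mathds{1}_{\sigma_0\in\mathsf{Cross}_n},\mathds{1}_{\sigma_t\in\mathsf{Cross}_n})$, and the goal is to show it tends to $0$. As in \cite{TV23}, one differentiates a suitably chosen functional of the dynamics in the time parameter (or in an auxiliary scale/noise parameter) and bounds the derivative by an expression involving pivotal events for $\mathsf{Cross}_n$; the subcriticality $\beta<\beta_c$ enters through exponential decay of the $+$-cluster connectivities, which forces crossings to be achieved on many essentially independent local scales and hence makes the relevant influences small.

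First I would set up the dynamics and the coupling precisely (this is promised for Section~\ref{ssec:setup}) and record the two structural inputs singled out in the abstract: a \emph{finite-energy} property for the joint law of $(\sigma_0,\sigma_t)$ — every local configuration of the pair has conditional probability bounded away from $0$ and $1$ given the outside — and a \emph{spatial mixing} (quasi-factorization) property for this joint law, inherited from the Ornstein–Zernike / exponential-decay regime at $\beta<\beta_c$. These replace, respectively, the ``positive information'' and ``independence at distance'' features that Bernoulli percolation has for free. Second, using these, I would prove a quasi-multiplicativity estimate for $+$-crossing probabilities of rectangles and annuli under $\mu_{2n}$, together with the statement that the crossing probability of a rhombus of aspect ratio bounded away from degenerate decays in its size — i.e.\ an RSW-type and one-arm-type toolbox in the subcritical Ising model on $\T$. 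Third, with this toolbox in hand, I would run the \cite{TV23} argument: encode the dynamics by a sprinkled/monotone parameter, establish a differential inequality of the form $\frac{d}{dt}\mathrm{something} \le -c\,(\text{something})\cdot(\text{typical number of pivotal scales})^{-1}$, and integrate it to get that after any fixed time $t>0$ the two crossing indicators decorrelate; since each indicator has mean $1/2$, the joint probability converges to $1/4$.

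The main obstacle I expect is the second step: transporting the RSW / quasi-multiplicativity machinery from Bernoulli percolation to the Ising setting and, crucially, to the \emph{joint} configuration $(\sigma_0,\sigma_t)$. For a single Ising configuration at $\beta<\beta_c$ one can hope to use FKG, finite energy, and the sharp exponential decay of connectivities, but the dynamics couples the two times in a way that is neither a product measure nor obviously FKG on the pair, so the spatial-mixing and finite-energy statements for $(\sigma_0,\sigma_t)$ must be established essentially by hand, controlling how an update at a site propagates correlations through the Glauber mechanism. I would isolate these as standalone lemmas (finite energy for the pair; a Dobrushin-type mixing bound for the pair valid for all $\beta<\beta_c$, uniformly in $t$), prove them using a coupling/disagreement-percolation argument together with the exponential decay of the static model, and only then feed them into the otherwise structural \cite{TV23} scheme. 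A secondary technical point is handling free boundary conditions and the ``thickened rhombus'' $\Lambda^{(\delta)}$ device so that boundary effects do not interfere with the localization of pivotals; this should be routine once the mixing lemma is available.
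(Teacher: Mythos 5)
Your high-level architecture matches the paper's: establish finite-energy and spatial mixing for the pair $(\sigma_0,\sigma_t)$ ``by hand'' via a disagreement-percolation-style argument on the clock updates (this is essentially the paper's red/green-cluster decomposition in Sections~\ref{sec:ins_tol}--\ref{sec:mix_sub}), build a box-crossing/quasi-multiplicativity toolbox, prove a Russo-type differential formula in terms of pivotal sites, and integrate the resulting differential inequalities as in \cite{TV23}.

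However, there is a genuine conceptual error in how you locate the role of $\beta<\beta_c$. You assert that subcriticality gives ``exponential decay of the $+$-cluster connectivities, which forces crossings to be achieved on many essentially independent local scales.'' That is wrong, and if it were right the problem would be degenerate: with free boundary conditions the $+$-marginal is exactly $1/2$ by spin-flip symmetry, self-duality forces $\mu_{2n}(\mathsf{Cross}_n)=1/2$ for all $n$, and the $+$-percolation is \emph{critical}, with box-crossing probabilities bounded away from $0$ and $1$ --- precisely the point the paper stresses in the first remark after Theorem~\ref{thm:noise}, encoded in \eqref{eq:bxp}. What $\beta<\beta_c$ actually buys is subtler: (i) the four-arm ``exponent'' is strictly less than $2$, i.e.\ $\alpha_n/\alpha_m\ge c(m/n)^{2-c}$ as in \eqref{eq:4arm} (proved in the appendix using exponential decay of \emph{spin-spin} correlations, not of cluster connectivities), and (ii) the relaxation-time/dynamical-mixing bound \eqref{eq:dyn_mix}. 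It is the bound on $\alpha_n$ that separates $\beta<\beta_c$ from $\beta=\beta_c$, where the four-arm exponent exceeds $2$ and noise sensitivity fails; without it your ``typical number of pivotal scales'' heuristic has no quantitative content. Two smaller points: your worry that the pair is ``not obviously FKG'' is beside the point --- the paper proves a dynamical FKG inequality in Section~\ref{ssec:dyn_fkg} --- and the genuine obstruction requiring the by-hand clock-cluster argument is that $(\sigma_0,\sigma_t)$ lacks the \emph{spatial Markov property}, which is what both the finite-energy (Proposition~\ref{prop:the_one_we_prove}) and spatial-mixing (Proposition~\ref{prop:mix}) proofs carefully work around.
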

\begin{samepage}
  \noindent\textbf{Remarks:}
  \begin{itemize}
  \item We stress that even if the model is subcritical from the point of view of spin/spin correlations (i.e.\ $\beta<\beta_c$), it is critical from the point of view of percolation observables (for instance, crossing probabilities are bounded away from $0$ and $1$, see Section \ref{ssec:perco}).
  \item Readers are probably wondering about the cases $\beta = \beta_c$ and $\beta > \beta_c$. In both these cases, the crossing events are not noise sensitive anymore.\footnote{At least they are not noise sensitive at a fixed time $t>0$. The question of rather studying the sensitivity at a time that equals a constant times the relaxation time might be a more natural question.} The reason is that the exponent of the four-arm event (see Definition \ref{defi:4arm}) is larger than $2$ in these cases. More precisely, if we use the notations from Section \ref{ssec:sharp}, then $\alpha_n \le n^{-(2+c)}$ for some $c>0$ if $\beta=\beta_c$ (see the work of Wu \cite{Wu18}) and $\alpha_n$ even decays exponentially fast if $\beta > \beta_c$. The general differential formulas (see e.g.\ Section~\ref{ssec:diff}) that we use in this paper then imply that, for every $t>0$, $\Pro (\sigma,\sigma_t \in \mathsf{Cross}_n)\rightarrow 1/2$.
  \item We stated the result for the crossing event of a rhombus, but the proof adapts directly to more general shapes, for instance with $\Lambda_n$ replaced by a $\rho n \times n$ rectangle for some fixed $\rho > 0$.
  \item We have chosen to work in finite volume in the whole paper. Another possibility could have been to sample $\sigma$ according to the Gibbs measure $\mu$ at inverse temperature $\beta$ in infinite volume (which is unique when $\beta<\beta_c$)\footnote{See e.g.\ \cite{FV17} for the definition of this measure.} and let this configuration evolve according to the Glauber dynamics in infinite volume. We have preferred to work only in finite volume in order to be able to ignore several technical issues (in particular in the derivation of differential formulas). The (essentially unique) drawback is that translation invariance does not hold but is replaced by a `quasi-invariance by translation', see Section~\ref{ssec:inv_dyn}.
  \end{itemize}
\end{samepage}
Concerning other questions related to noise sensitivity in the Ising model, see in particular the work of Galicza and Pete \cite{GP24} (who prove that no sparse reconstruction is possible for transitive functions in the regime $\beta<\beta_c$) and the work of Broman and Steif \cite{BS05} (about stability properties in Ising models that are non-critical from the point of view of percolation observables).

\subsection{Sharp noise sensitivity}\label{ssec:sharp}

Our analysis is actually quantitative and we obtain the sharp noise sensitivity theorem, i.e.\ the analogue of the theorem obtained by Garban, Pete and Schramm \cite{GPS10} for Bernoulli percolation evolving under i.i.d.\ dynamics. To state this theorem, we need to define the $4$-arm event.
\begin{definition}\label{defi:4arm}
Let $\beta\in(0,+\infty)$ and $n\ge 1$. We let $A_4(n) \subset \Omega_{2n}$ denote the event that there are four paths from a neighbour of the origin $o:=(0,0)$ to $\partial\Lambda_n:=\{x\in \Lambda_n : \exists y \in \T\setminus\Lambda_n : y \sim x\}$, such that each path is of constant sign and the signs of the four paths alternate around~$o$. $A_4(n)$ is called the `$4$-arm event'. We let
\[
\alpha_n=\mu_{2n}(A_4(n)) \quad \text{and} \quad \varepsilon_n=1/(n^2\alpha_n).
\]
\end{definition}
\begin{remark}\label{rem:alpha_4}
It can help to keep in mind that, if $\beta<\beta_c$, then there exist $c,C>0$ such that $\varepsilon_n\le Cn^{-c}$ for every $n\ge 1$ (see Section~\ref{ssec:perco}).
\end{remark}
The noise sensitivity property Theorem \ref{thm:noise} is a consequence of the following theorem (and of Remark \ref{rem:alpha_4}).
\begin{theorem}[Sharp noise sensitivity]\label{thm:sharp}
Let $\beta<\beta_c$ and let $(t_n)_{n\ge 1}$ be a sequence of non-negative numbers.
\begin{itemize}
\item If $t_n / \varepsilon_n \rightarrow +\infty$, then $\Pro\big(\sigma,\sigma_{t_n} \in \mathsf{Cross}_n\big) \rightarrow 1/4$; 
\item If $t_n / \varepsilon_n \rightarrow 0$, then $\Pro\big(\sigma,\sigma_{t_n} \in \mathsf{Cross}_n\big) \rightarrow 1/2$;
\end{itemize}
where $(\sigma_s)_{s \ge 0}$ is the Glauber dynamics in $\Lambda_{2n}$ (with free boundary conditions), at inverse temperature $\beta$ and starting from $\sigma\sim \mu_{2n}$ (as defined in Section \ref{ssec:setup}).
\end{theorem}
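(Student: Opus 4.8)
The plan is to follow the differential-inequality strategy from \cite{TV23}, now carried out for the pair $(\sigma_0,\sigma_t)$ rather than for i.i.d.\ dynamics on Bernoulli percolation. Write $f_n(t)=\Pro(\sigma_0,\sigma_t\in\mathsf{Cross}_n)$. The starting point is a differential formula for $\tfrac{d}{dt}f_n(t)$ in terms of pivotal events: when a clock rings at a site $x$, the spin is resampled, and this can change the status of $\mathsf{Cross}_n$ only if $x$ is pivotal for the crossing in at least one of the two configurations. The rate at which the crossing status at time $t$ is flipped is therefore controlled by the probability, in the joint law of $(\sigma_0,\sigma_t)$, that some site is pivotal, weighted by the (bounded, by finite-energy) conditional flip probability. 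Since a site pivotal for $\mathsf{Cross}_n$ carries a (polychromatic, alternating) $4$-arm configuration around it up to the boundary of a macroscopic box, the total pivotal weight is comparable to $n^2\alpha_n=1/\varepsilon_n$. This already gives the soft direction $t_n/\varepsilon_n\to 0\Rightarrow f_n(t_n)\to 1/2$: on that time scale, with high probability \emph{no} clock at a pivotal site has rung, so $\sigma_0$ and $\sigma_t$ have the same crossing status, and $\Pro(\sigma_0\in\mathsf{Cross}_n)=1/2$.

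For the sensitive direction $t_n/\varepsilon_n\to+\infty$, one shows that $f_n(t)$ relaxes to $1/4$ at rate $\asymp 1/\varepsilon_n$, i.e.\ roughly $\tfrac{d}{dt}(f_n(t)-1/4)\le -\tfrac{c}{\varepsilon_n}(f_n(t)-1/4)+(\text{error})$. The main input is a \emph{quasi-multiplicativity / spatial-independence} estimate for the four-arm event in the pair $(\sigma_0,\sigma_t)$: conditionally on the configuration far from a site $x$, the event that $x$ is pivotal in both $\sigma_0$ and $\sigma_t$ should, up to constants, factorize over well-separated annuli and decouple the two time marginals at small scales. Concretely, following \cite{TV23}, one couples the dynamics with a \emph{resampled} dynamics that refreshes all spins inside a random mesoscopic box; the difference of crossing probabilities before and after this resampling is estimated from above and below by the pivotal formula, and a Markov-chain (or martingale) argument on the resulting recursion drives $f_n$ to its independent value $1/4$ once the cumulative noise $t/\varepsilon_n$ is large. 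The self-duality giving $\Pro(\sigma_0\in\mathsf{Cross}_n)=1/2$ and a second-moment/positive-association argument for the product structure pin the limit at $(1/2)^2=1/4$.

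The genuinely new work, relative to \cite{TV23}, is establishing the structural properties of the \emph{pair} $(\sigma_0,\sigma_t)$ that the percolation arguments above take for granted in the Bernoulli setting: (i) a \emph{finite-energy} property for the joint law, uniform in $t$ and in the boundary condition, so that conditioning on the configuration outside a box changes local probabilities by only bounded factors; (ii) a \emph{spatial (ratio) mixing} statement for $(\sigma_0,\sigma_t)$, which in the subcritical regime $\beta<\beta_c$ should follow from exponential decay of correlations for $\mu_\beta$ together with the locality of Glauber updates over a finite time horizon, but requires care because the time-$t$ marginal is not a Gibbs measure; and (iii) the quasi-translation-invariance noted in the introduction, to replace exact translation invariance in the renormalization. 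These are precisely the ingredients flagged in the abstract, and I expect (ii) — uniform spatial mixing for the joint configuration $(\sigma_0,\sigma_t)$ together with the attendant quasi-multiplicativity of $\alpha_n$ — to be the main obstacle; everything downstream (the differential inequalities, the coupling with the box-resampled dynamics, and the Grönwall-type conclusion) then runs parallel to \cite{TV23}, with $1/\varepsilon_n$ playing the role of the near-critical relaxation rate.
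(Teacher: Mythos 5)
Your sketch correctly identifies the structural infrastructure the paper builds for the pair $(\sigma_0,\sigma_t)$ -- finite-energy, spatial mixing, a Russo-type differential formula, quasi-multiplicativity -- and your account of the stability direction (integrate the upper bound of the differential inequality from $0$ to $t_n$ and use $t_n n^2\alpha_n\to 0$) is essentially right. But your description of the sensitivity direction has a genuine gap. You propose a Gr\"onwall-type inequality of the form $\frac{d}{dt}\big(f_n(t)-\tfrac14\big)\le -\tfrac{c}{\varepsilon_n}\big(f_n(t)-\tfrac14\big)$, driven by a coupling with a ``box-resampled'' dynamics. This cannot hold as stated: the differential formula gives $-f_n'(t)\asymp n^2\pi_n(t)$, and $\pi_n(t)$ itself decays with $t$, so the instantaneous decay rate of $f_n$ is far below $1/\varepsilon_n$ once $t\gg\varepsilon_n$; there is no direct comparison $n^2\pi_n(t)\gtrsim f_n(t)-\tfrac14$ to feed a Gr\"onwall argument. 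The actual engine of the noise direction -- which you do not identify -- is the superquadratic decay of the dynamical four-arm probability above the sensitivity length: for $n\ge m\ge\ell(t)$, $\pi_n(t)/\pi_m(t)\le C(m/n)^{2+c}$ (Proposition~\ref{prop:superquadra}, proved from the three differential properties \eqref{eq:P1}--\eqref{eq:P3} together with \eqref{eq:4arm}). One then writes $|f_n(t_n)-\tfrac14|\le\int_{t_n}^{\log n}(-f_n'(s))\,ds + o(1)$ and bounds the integral using this decay; no box-resampling coupling or Markov-chain recursion appears in either this paper or \cite{TV23}.

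A second gap: all the structural results for $(\sigma_0,\sigma_t)$ that you (correctly) flag as the new work are established only for $t\le\tau$, a small constant tied to subcriticality of the updated (``green'') set. Your proposal does not address how to handle $t>\tau$, which is unavoidable since $t_n/\varepsilon_n\to\infty$ allows $t_n$ to exceed $\tau$. The paper closes this with two ingredients you do not mention: the dynamical mixing (relaxation-time) bound $|\Pro(\sigma,\sigma_t\in A)-\mu_n(A)^2|\le e^{-ct}$ from~\cite{MOS94}, which pins the value at $t=\log n$ to $1/4+O(n^{-c})$, and convexity of $t\mapsto f_n(t)$, which bounds $-f_n'(s)$ for $s>\tau$ by $-f_n'(\tau)$, itself controlled via the superquadratic decay. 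Your appeal to ``a second-moment/positive-association argument'' to get the limiting value $(1/2)^2$ is replaced in the paper by this relaxation estimate; positive association alone only gives $f_n(t)\ge 1/4$, not convergence.
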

The   main novelty in the theorem above is the first  item (noise region). Indeed, the second item (stability region) could be derived from the arguments in \cite[Section 8]{GPS18}. 

\subsection{Strategy of the proof}\label{ssec:high}

To prove the noise sensitivity theorems, we follow the approach  from our work \cite{TV23}, which in the present context consists in studying differential inequalities satisfied by $t \mapsto \Pro(\sigma,\sigma_t\in A)$ for percolation-type events $A$. We prove that, when $t$ is small enough, the pair $(\sigma,\sigma_t)$ essentially satisfies all the desired general properties to run 
 this proof. In particular:
\begin{itemize}
  \item (Finite-energy). Changing the spin value at times $0$ and $t$ at some fixed vertex has a cost bounded independently of the rest of the configuration.
  \item (Spatial mixing, when $\beta\le \beta_c$). The configuration in a box is quasi-independent of  the configuration at macroscopic distance from it. A typical application is about arm events: let $A_1(m,n)$ be the one-arm event, that is, the event that there is a path made of spins $+1$ from $\partial\Lambda_m$ to $\partial\Lambda_n$. If $n \ge 2m$ then
    \[
      \Pro(\sigma,\sigma_t \in A_1(1,n)) \le C\cdot \Pro(\sigma,\sigma_t \in A_1(1,m))\Pro(\sigma,\sigma_t \in A_1(2m,n)).
      \]      
\item (Russo-type differential formula). The derivative of $t \mapsto\Pro(\sigma,\sigma_t\in A)$ can be expressed in terms of pivotal events for the event $A$. This essentially follows from standard derivative formulas for Markovian dynamics, together with the above-mentioned finite-energy property.
  \end{itemize}
The first two properties are known for the static Ising model (i.e.\ for $\sigma$ instead of $(\sigma,\sigma_t)$) and in this case essentially follow from the spatial Markov property of this model. Such a direct approach seems delicate since the pair $(\sigma,\sigma_t)$ \textit{does not satisfy the  spatial Markov property}.\footnote{For more about this lack of spatial Markov property for the pair, see the sketch of proof from Section~\ref{sec:finite-energy-one}.} Instead, we decompose static and dynamical influences. More precisely the  spin at some site $x$ at time $0$ can  influence the  spin at some other site $y$ at time $t$  through two  mechanisms: Either the spin at $x$ has already influenced the spin at $y$ at time $0$, which can be estimated by using the properties of the static Ising model. Or the information of the value of the spin at $x$ has traveled through time,  which we estimate using  that the set of sites that have rung before time $t$ is a subcritical percolation (provided $t$ is small enough) and that the  dynamics is local (the probability that a spin changes when a clock rings only depends on the neighbours of the site).

\medskip

Finite-energy, spatial mixing and a Russo-type formula are proven in Sections \ref{sec:ins_tol}, \ref{sec:mix_gen} and \ref{sec:mix_sub}. Then, the percolation arguments inspired from \cite{TV23} are done in Sections \ref{sec:quasimulti} and \ref{sec:noise_sens}. Most of the work in these last sections consists of proving the so-called quasi-multiplicativity property for the quantities $\Pro[\text{a $4$-arm event holds at times $0$ and $t$}]$. The general strategy is standard, but new difficulties emerge due to possible interactions between $-1$ paths at time $0$ and $+1$ paths at time $t$. See Section \ref{sec:quasimulti} (and in particular the paragraph below Proposition \ref{prop:sep}).

\subsection{Setup}\label{ssec:setup}

In this section, we introduce the general setup we will be working with in the rest of the paper. More precisely, in Definition \ref{defi:glauber} below, we define a Glauber dynamics in $\Lambda_n$. Each time we refer to the `Glauber dynamics in $\Lambda_n$' in this paper, we refer to the dynamical process $(\sigma_s)_{s\ge 0}$ that is introduced in this definition. Let us fix some $\beta\in(0,+\infty)$ once and for all in this section and let us first introduce some notations:
\begin{itemize}[noitemsep]
\item If $n\ge 1$, $\eta\in \Omega_n$ and $x \in \Lambda_n$, we let $\eta^x \in \Omega_n$ be the spin configuration obtained from $\eta$ by just flipping the spin at $x$;
\item We define the `rates' $c_x : \Omega_n \rightarrow [0,1]$ by
\[
c_x(\eta)=\frac{\mu_n(\eta^x)}{\mu_n(\eta)+\mu_n(\eta^x)} = \frac{1}{1+\exp\big(2\beta\sum_{y \in \Lambda_n : y\sim x}\eta_x\eta_y \big)}.
\]
N.B: $c_x(\eta)$ only depends on the value of $\eta$ at $x$ and its neighbours and is equal to the probability that the spin at $x$ equals $-\eta(x)$ conditionally on the event that the spin configuration coincides with $\eta$ at the neighbours of $x$. 
\end{itemize}
 
\begin{remark}
The rates $c_x$ defined above correspond to the so-called Glauber heat-bath dynamics. All of our analysis would also work in the case of the so-called Glauber Metropolis dynamics, where $c_x(\eta)$ is replaced by $\min \{ 1, \mu_n(\eta^x)/\mu_n(\eta)\}$.
\end{remark}
\begin{definition}[Glauber dynamics]\label{defi:glauber}
Given some $n\ge 1$, we define a continuous time dynamical process $(\sigma_s)_{s \ge 0}$ on $\Omega_n$ called `Glauber dynamics in $\Lambda_n$' (with free boundary conditions and no external field). This process is defined on some probability space $(\Omega,\mathcal{F},\Pro)$ and is constructed as follows:
\begin{itemize}[noitemsep]
\item We sample $\sigma \sim \mu_n$ and we let $\sigma_0=\sigma$;
\item We associate a Poisson Point Process $Y_x$ of parameter $1$ on $\R_+$ (a `Poisson clock') to every $x \in \Lambda_n$, independently for every $x$ and independently of $\sigma$;
\item We let $(U_{x,i})_{x \in \Lambda_n,i\ge 1}$ be a family of independent variables that are uniform in $[0,1]$ and are independent of $\sigma$ and the Poisson clocks;
\item If the clock on a vertex $x$ has rung for the $i^{th}$ time at some time $t$, then we flip the corresponding spin if $U_{x,i}\ge 1- c_x(\sigma_{t-})$. 
\end{itemize}
We let $Y=(Y_x)_{x \in \Lambda_n}$ and $U=(U_{x,i})_{x \in \Lambda_n,i\ge 1}$.
\end{definition}
We have thus defined a continuous time càdlàg Markov process with values in $\Omega_n$. See for instance \cite[Chapter 4]{Lig85} more about this process. It is important to bear in mind that this process starts from its invariant measure $\mu_n$, and that this measure is reversible.

\paragraph{Some conventions about vocabulary and notations.}
In all the paper, given some $n\ge 1$, the sentence `$A$ is a static event' means that $A$ is a subset of $\Omega_n$ and, given some $V\subseteq\Lambda_n$, the sentence `$A$ is a static event that depends only on $V$' means that $A$ is a subset of $\Omega_n$ such that $1_A(\eta')=1_A(\eta)$ for every spin configurations $\eta,\eta'\in \Omega_n$ that coincide in $V$.
In order to make the difference between static and dynamical events, we use letters $A,B,\dots$ to denote static events, i.e.\ subsets of $\Omega_n$, and letters $\mathcal{A},\mathcal{B},\dots$ to denote dynamical events, i.e.\ elements of the $\sigma$-algebra $\mathcal{F}$ from Definition \ref{defi:glauber}.
We use the letter $\eta$ to denote a fixed spin configuration (i.e.\ an element of $\Omega_n$) and, as the reader can see in Definition \ref{defi:glauber}, $\sigma$ is a random spin configuration sampled according to $\mu_n$ and $(\sigma_s)_{s \ge 0}$ is the Ising Glauber dynamics in $\Lambda_n$ that starts from $\sigma$.

\paragraph{Boundaries.} If $V\subseteq\T$, we let $\partial V = \{x\in V : \exists y \in \T \setminus V, y \sim x \}$ and $\partial_\mathrm{ext}V=\{x\in \T\setminus V : \exists y \in V, y \sim x \}$.

\paragraph{Conventions about constants.}
In all the paper, the constants $c$, $C$, $c'$, $C'$, $c_0$, $C_0$ are positive finite constants that are independent of everything except the inverse temperature $\beta$. Sometimes, the constants will depend on a parameter $\delta$. In this case, we will use the notations $c_\delta,C_\delta$. 
In the proofs, we will often omit the constants and rather use the notations $\lesssim$ and $\asymp$: if $f,g$ are positive functions, we write $f \lesssim g$ if there exists $C>0$ such that $f \le Cg$ and we write $f \asymp g$ if $f \lesssim g$ and $g \lesssim f$.

\paragraph{Acknowledgments.} We thank Fabio Martinelli, Sébastien Ott and G\'{a}bor Pete for background on Glauber dynamics, Vincent Beffara, Loren Coquille and Corentin Faipeur for helpful discussions on the Ising model, and Malo Hillairet and Ritvik Radhakrishnan for useful comments on a preliminary version of this paper.

This project has received funding from the European Research Council (ERC) under the European Union’s Horizon 2020 research and innovation programme (grant agreement No 851565).

\section{Background}

\subsection{Background on static Ising model}\label{back_stat}

\subsubsection{Three properties}\label{ssec:bc}

Let us start this subsection with a preliminary remark: In the present section, we consider the Ising model on some general finite set $V\subset\T$ and with some general boundary conditions. We will consider the Ising model in this generality a few times but we will mostly work with the Ising measure $\mu_n$ with free boundary conditions on $\Lambda_n$, so the notations from this subsection will in fact only appear very occasionally.

Let $\beta\in(0,+\infty)$, let us also fix some finite set $V \subset \T$ and recall that $\partial_{\mathrm{ext}} V=\{x\in\T\setminus V : \exists y \in V, y\sim x\}$. Moreover, let $\xi$ be a boundary condition on $V$, that is, an element of $\{-1,0,+1\}^{\partial_{\mathrm{ext}} V}$. We let $\mu_V^\xi$ be the Ising probability measure on $\Omega_V:=\{-1,+1\}^V$ with boundary condition $\xi$ (see the appendix for the definition of this measure). We let $\langle\cdot\rangle_V^\xi$ denote the corresponding expectation. Moreover, if $\xi \equiv +1$, we let $\mu^\xi_V=\mu_V^+$, if $\xi\equiv -1$, we let $\mu^\xi_V=\mu_V^-$ and if $\xi \equiv 0$, we let $\mu_V^\xi=\mu_V^\emptyset$ (so $\mu_n=\mu_{\Lambda_n}^\emptyset$).

\medskip

We refer for instance to \cite{FV17} for a proof of the following three properties. Recall that we have fixed a finite subset $V\subset \T$. We say that a function $f : \Omega_V \rightarrow \R$ is increasing if $f(\eta')\ge f(\eta)$ for every $\eta,\eta'$ such that $\eta'(x)\ge \eta(x)$ for every $x\in V$. 

\paragraph{FKG inequality.} For any increasing functions $f,g : \Omega_V \rightarrow \R$ and any boundary condition~$\xi\in \{-1,0,+1\}^{\partial_{\mathrm{ext}} V}$,
\begin{equation}\label{eq:fkg}
\tag{FKG}
\< {fg}_V^\xi\ge \< f_V^\xi \< g_V^\xi.
\end{equation}

\paragraph{Comparison between boundary conditions.} Let $\xi,\psi\in\{-1,0,+1\}^{\partial_{\mathrm{ext}} V}$ be two boundary conditions on $V$ such that $\psi(x)\ge \xi(x)$ for every $x \in \partial_{\mathrm{ext}} V$. For any increasing function $f : \Omega_V \rightarrow \R$,
\begin{equation}\label{eq:mon}
\tag{MON}
\< f_V^{\psi}\ge \< f_V^\xi.
\end{equation}

\paragraph{Spatial Markov property.} Let $W \subseteq V$, let $\xi \in \{-1,0,+1\}^{\partial_\mathrm{ext}V}$ be a boundary condition on $V$ and let $\eta \in \{-1,+1\}^{V\setminus W}$. Moreover, let $\psi\in\{-1,0,+1\}^{\partial_\mathrm{ext}W}$ be the boundary condition on $W$ that coincides with $\xi$ in $ \partial_\mathrm{ext}W \cap \partial_\mathrm{ext}V$ and with $\eta$ in $(\partial_\mathrm{ext}W) \cap V$. We have
\begin{equation}\label{eq:smp}
\tag{SMP}
\mu_V^\xi( \text{spin config.\ in $W$ = }\cdot \mid \text{spin config.\ in $V\setminus W$ equals $\eta$} ) = \mu_{W}^{\psi}.
\end{equation}

\subsubsection{Percolation properties}\label{ssec:perco}

In this section, we state two key percolation properties for the (static) Ising model at inverse temperature $\beta\le \beta_c$ or $\beta<\beta_c$. These two properties are not new and are proven for completeness in the appendix. If $1 \le m \le n$, we consider the elongated rhombus $\Lambda_{m,n}=\{k+\ell e^{i\pi/3} : k \in \{-m,\dots,m\}, \ell \in \{-n,\dots,n\}\}$. Moreover, we let $\mathsf{Cross}_{m,n}$ denote the left-right crossing event of $\Lambda_{m,n}$ by $+1$ spins (so $\mathsf{Cross}_{n,n}=\mathsf{Cross}_n$).
\paragraph{Box crossing property.} Let $\beta\le \beta_c$. Then, for every $\rho>0$ and $\delta>0$, there exists $c_{\rho,\delta}>0$ such that, for every $n\ge (1/\rho) \vee 1$,
\begin{equation}\label{eq:bxp}
\tag{BXP}
\mu_{\Lambda^{(\delta)}_{\rho n,n}}^-(\mathsf{Cross}_{\rho n, n}) \ge c_{\rho,\delta},
\end{equation}
where $\Lambda^{(\delta)}_{\rho n,n}$ is the thickened elongated rhombus $\Lambda_{(1+\delta)\rho n,(1+\delta)n}$.
\paragraph{Upper bound on the `exponent' of the $4$-arm event.} Let $\beta < \beta_c$ and recall that $\alpha_n$ is the $\mu_{2n}$-probability of the $4$-arm event from the origin $o$ to $\partial \Lambda_n$. There exists $c>0$ such that, for every $n\ge m \ge 1$,
\begin{equation}\label{eq:4arm}
\frac{\alpha_n}{\alpha_m} \ge c\Big(\frac{m}{n}\Big)^{2-c}.
\end{equation}
\subsubsection{Spatial mixing property}\label{ssec:inv}
The first brick in the proof of the spatial mixing property for the pair $(\sigma,\sigma_t)$ at $\beta\le \beta_c$ is the analogous static property, that we state now. We do not write the proof of this property because it is exactly the same as the analogous result for FK percolation, see for instance \cite[Theorem~5]{DST17} and \cite[Corollary 2.10]{DM22} (Remark: the proof is based on \eqref{eq:bxp}, \eqref{eq:smp} and \eqref{eq:mon}).
\paragraph{Spatial mixing.} Let $\beta\le \beta_c$ and $\delta>0$. There exist $c_\delta,C_\delta>0$ such that the following holds for every rhombus $\Lambda$. Let $A \subseteq \{-1,+1\}^{\Lambda^{(\delta)}}$ that depends only on $\Lambda$ and let $\xi$ and $\psi$ be two boundary conditions on the thickened rhombus $\Lambda^{(\delta)}$, i.e.\ $\xi,\psi\in\{-1,0,+1\}^{\partial_\mathrm{ext}\Lambda^{(\delta)}}$. Then,
\begin{equation}\label{eq:stat_mix_initial}
c_\delta \cdot \mu_{\Lambda^{(\delta)}}^\psi(A) \le \mu_{\Lambda^{(\delta)}}^\xi(A) \le C_\delta \cdot \mu_{\Lambda^{(\delta)}}^\psi(A).
\end{equation}
This result (together with the spatial Markov property \eqref{eq:smp}) has the following two consequences, which are the results that we will use in the present paper. Let $\beta\le \beta_c$.
\begin{itemize}
\item For every $\delta>0$, there exist $c_\delta,C_\delta>0$ such that the following holds for every $n\ge 1$. Let $\Lambda$ be a rhombus such that the thickened rhombus $\Lambda^{(\delta)}$ is included in $\Lambda_n$, let $A \subseteq\Omega_n$ be a static event that depends only on $\Lambda$ and let $B \subseteq\Omega_n$ be a static event that depends only on $\Lambda_n\setminus \Lambda^{(\delta)}$. Then,
\begin{equation}\label{eq:stat_mix_used}
c_\delta \cdot \mu_n(A\cap B) \le \mu_n(A)\mu_n(B) \le C_\delta \cdot \mu_n(A\cap B).
\end{equation}
\item For every $m\ge 1$, $\eta \in \{-1,+1\}^{\Lambda_{2m}(x)}$ and $x\in \T$, we let $T_{-x}\eta = \eta(x+\cdot) \in \Omega_{2m}$. The second consequence of \eqref{eq:stat_mix_initial} is the following:
There exist $c,C>0$ such that the following holds for every $n\ge m \ge 1$. Let $x \in \T$ such that $\Lambda_m(x) \subseteq \Lambda_n$. Then, for every static event $A\subseteq \Omega_{2m}$ that depends only on $\Lambda_{1.5m}$, we have
\begin{equation}\label{eq:stat_mix_box}
c\cdot \mu_{2m}(A) \le \mu_{2n}(T_xA) \le C\cdot \mu_{2m}(A),
\end{equation}
where $T_xA$ is the event $A$ translated by $x$ i.e.\ $T_xA=\{ \eta \in \Omega_{2n} : T_{-x}(\eta_{|\Lambda_{2m}(x)}) \in A \}$.
\end{itemize}
\subsection{Background on Glauber dynamics}\label{sec:back_dyn}
Let $n \ge 1$ and $\beta\in(0,+\infty)$. In all this section, $(\sigma_s)_{s\ge 0}$ is the Glauber dynamics in $\Lambda_n$ (from Definition~\ref{defi:glauber}) at inverse temperature $\beta$ that starts from $\sigma \sim \mu_n=\mu_{n,\beta}$.
\subsubsection{Semi-group and infinitesimal generator}\label{ssec:generator}
In this section, we recall some general properties about the semi-group and the generator of the Glauber dynamics. (See for instance \cite[Chapters 1 and 2]{Lig85} for more details.) The semi-group $(P_t)_{t \ge 0}$ of this dynamics is defined by the following formula. Let $f : \Omega_n \rightarrow \R$. Then for every $t \ge 0$, $P_t f : \Omega_n \rightarrow \R$ is the function
\[
\E\big[f(\sigma_t)\;\big|\;\sigma=\cdot\big].
\]
The infinitesimal generator $L : \R^{\Omega_n} \rightarrow \R^{\Omega_n}$ is defined by
\[
L f=\Big(\frac{d}{dt}P_t f\Big)_{|t=0}.
\]
Let us state a few standard key properties of $P_t$ and $L$.
\begin{remark}\label{rem:gen}
\begin{itemize}[noitemsep]
\item For every $s,t\ge 0$, $P_t\circ P_s=P_{t+s}$;
\item For every $t \ge 0$, $P_t(L f)=L(P_t f)=\frac{d}{dt}P_t f$;
\item If $f:\Omega_n\rightarrow\R$ is increasing, then for any $t\ge 0$, $P_t f$ is increasing.\footnote{This last property comes from the fact that, if one starts the Glauber dynamics with two different initial values $\eta,\eta'\in\Omega_n$ such that $\eta'(x)\ge \eta(x)$ for every $x\in \Lambda_n$ and uses the same Poisson clocks $Y_x$ and uniforms $U_{x,i}$ in both cases, then the monotonicity between the two configuration is conserved in time. To show this last fact, one can use that $\eta \in \Omega_n \mapsto c_x(\eta^{x\leftarrow-})$ is increasing and $\eta \in \Omega_n \mapsto c_x(\eta^{x\leftarrow +})$ is decreasing where, for every $x\in \Lambda_n$ and $\eta\in\Omega_n$, $\eta^{x\leftarrow\pm}$ is the spin configuration that is obtained from $\eta$ by assigning the value $\pm1$ to the spin at~$x$.}
\end{itemize}
\end{remark}

\paragraph{A formula for the infinitesimal generator.}\label{sec:infin-gener} Recall that, given a spin configuration $\eta\in \Omega_n$, we let $\eta^x$ denote the spin configuration obtained from $\eta$ by flipping the spin at $x$. The infinitesimal generator satisfies the following formula (see the appendix for the corresponding computation). For every function $f : \Omega_n \rightarrow \R$ and every $\eta\in \Omega_n$,
\[
L f(\eta)=\sum_{x\in \Lambda_n}c_x(\eta)\big( f(\eta^x)-f(\eta)\big).
\]
\paragraph{Further properties.} We will also rely on the following `standard' results about reversible Markov processes. Let $f : \Omega_n \rightarrow \R$. Then,
\begin{itemize}[noitemsep]
\item For every $t \ge 0$, $\E [f(\sigma)f(\sigma_t)] \ge (\int fd\mu_n)^2$ (indeed, by reversibility in the last equality, we have $\int f d\mu_n = \int P_{t/2}fd\mu_n \le \sqrt{\int (P_{t/2}f)^2d\mu_n} = \sqrt{\E [f(\sigma)f(\sigma_t)]}$);
\item The function $t \mapsto \E [f(\sigma)f(\sigma_t)]$ is non-increasing (see for instance (1.4.2) and the computation above (1.7.2) from \cite[Chapter 1]{BGL13}); 
\item The function $t \mapsto \E [f(\sigma)f(\sigma_t)]$ is convex (see for instance the computations (1.7.2) from \cite[Chapter 1]{BGL13}).
\end{itemize}
\subsubsection{A general differential formula}\label{ssec:diff}
The strategy proposed in \cite{TV23} to prove noise sensitivity properties is based on the study of (integro-)differential inequalities satisfied by $(n,t) \mapsto \pi_n(t)$. As a first step, let us state a general differential formula for $t \mapsto \E [f(\sigma)g(\sigma_t)]$ (that is not new; this can be seen as a particular case of general differential formulas for Markov semi-groups, see e.g.\ \cite{BGL13}). For completeness, we prove this formula in the appendix (Section \ref{sec:app_diff}).
To state the differential formula in question, we introduce a dynamical process $(\sigma_s^{(x)})_{s\ge 0}$ for every $x \in \Lambda_n$. This dynamical process is defined almost exactly like $(\sigma_s)_{s\ge 0}$: it is defined on the same probability space, by using the same Poisson clocks $Y_y$, the same uniforms $U_{y,i}$ and the same rates $c_y$ (see Definition~\ref{defi:glauber}), but by flipping the spin at $x$ at time $0$ (i.e.\ by starting from the configuration $\sigma^x$ instead of $\sigma$).
\paragraph{A general differential formula.} For any $t\ge 0$ and $f,g:\Omega_n \rightarrow \R$, we have
\[
-\frac{d}{dt}\E \big[f(\sigma)g(\sigma_t)\big] = \frac{1}{2}\sum_{x \in \Lambda_n} \E \Big[ c_x(\sigma) \big( f(\sigma^x)-f(\sigma) \big) \big( g(\sigma^{(x)}_t)-g(\sigma_t) \big) \Big].
\]
\subsubsection{Dynamical FKG inequality}\label{ssec:dyn_fkg}
We will need the following dynamical FKG inequality. 
\paragraph{Dynamical FKG.} Let $F,G : (\Omega_n)^2 \rightarrow \R$ be two increasing functions.\footnote{Saying that $F$ is increasing means that both $F(\eta,\cdot)$ and $F(\cdot,\eta)$ are increasing for every $\eta\in \Omega_n$.} Then, for any $t \ge 0$,
\[
\E\big[F(\sigma,\sigma_t)G(\sigma,\sigma_t)\big] \ge \E\big[F(\sigma,\sigma_t)\big]\E\big[G(\sigma,\sigma_t)\big].
\]
\begin{proof}
This property is for instance a consequence of \cite[Corollary~2.21 of Chapter 2]{Lig85}. Indeed, by this corollary (together with \eqref{eq:fkg} and the third item of Remark~\ref{rem:gen}), it is sufficient to show that for every increasing functions $f,g : \Omega_n \rightarrow \R$, we have
\[
L(fg)\ge fLg+gLf.
\]
Let $f$ and $g$ be two such functions and let $\eta\in \Omega_n$. Recall that $\eta^x$ is obtained from $\eta$ by flipping the spin at $x$. By the formula for $L$ stated in Section \ref{ssec:generator}, the function $L( fg) - fLg - gLf$ evaluated at $\eta$ equals
\begin{align*}
&\sum_{x\in \Lambda_n}c_x(\eta) \big\{ f(\eta^x)g(\eta^x) - f(\eta)g(\eta) -f(\eta)g(\eta^x)+f(\eta)g(\eta) - f(\eta^x)g(\eta)+f(\eta)g(\eta)\big\}\\
& = \sum_{x\in \Lambda_n} c_x(\eta) \big(f(\eta^x)-f(\eta)\big)\big(g(\eta^x)-g(\eta)\big),
\end{align*}
which is non-negative since $f(\eta^x)-f(\eta)$ and $g(\eta^x)-g(\eta)$ have the same sign for every $x$.
\end{proof}
\begin{remark}\label{rem:dyn_fkg}
We will actually use the dynamical FKG inequality for a Glauber dynamics with general boundary conditions. Let $V$ be a finite subset of $\T$, let $\xi$ be a boundary condition on $V$ (i.e.\ $\xi \in\{-1,0,+1\}^{\partial_{\mathrm{ext}} V}$) and let $(\sigma^\xi_t)_{t \ge 0}$ be a Glauber dynamics in $V$ with boundary condition $\xi$ that starts from a random spin configuration $\sigma^\xi \sim \mu_V^\xi$ (recall that $\mu_V^\xi$ is the Ising measure on $V$ with boundary condition $\xi$). This dynamics is defined exactly like in Definition \ref{defi:glauber} but by replacing the state space $\Omega_n$ by $\Omega_V=\{-1,+1\}^V$ and the rates $c_x$ by $c_x^\xi(\eta) := \mu_V^\xi(\eta^x)/(\mu_V^\xi(\eta)+\mu_V^\xi(\eta^x))$.

The dynamical FKG inequality holds for this dynamics (and the proof is exactly the same): for any increasing functions $F,G : (\Omega_V)^2 \rightarrow \R$ and any $t \ge 0$,
\[
\E\big[F(\sigma^\xi,\sigma_t^\xi)G(\sigma^\xi,\sigma_t^\xi)\big] \ge \E\big[F(\sigma^\xi,\sigma_t^\xi)\big]\E\big[G(\sigma^\xi,\sigma_t^\xi)\big].
\]
\end{remark}
\subsubsection{Dynamical mixing}\label{ssec:dyn_mix}
\paragraph{The relaxation time is less than a constant if $\beta<\beta_c$.} Assume that $\beta<\beta_c$. Then, there exists $c>0$ such that, for any $n\ge 1$ and any static event $A\subseteq\Omega_n$,
\begin{equation}\label{eq:dyn_mix}
\big|\Pro(\sigma,\sigma_t \in A) -\mu_n(A)^2\big| \le e^{-ct}.
\end{equation}
This is proven in \cite[Theorme 3.2]{MOS94} under the assumption that the model satisfies what is called `Strong Mixing Condition for boxes' (condition (ii) from this theorem). This condition can be proven for any $\beta<\beta_c$ for instance by using the coupling with FK percolation as at the end of Section \ref{sec:app_box}.

\section{Finite-energy for the pair \texorpdfstring{$(\sigma,\sigma_t)$}{} for general \texorpdfstring{$\beta$}{}}\label{sec:ins_tol}
Static Ising model at positive temperature satisfies a finite-energy property: the cost of a local modification  is bounded away from $0$ and $1$  independently of the size $n$ of the system. The main goal of this section is to establish a  finite-energy property in the dynamical setting: at a fixed positive time $t>0$, we will also be able to perform local modifications on the pair $(\sigma,\sigma_t)$. 
\subsection{Definition of the time and finite-energy constants}
\label{ssec:defin-time-insert}
Fix $\beta>0$. In this section we define two constants that will play an important role in the rest of the paper: a time constant $\tau=\tau(\beta)>0$, and a finite-energy  constant $c_{\mathrm{FE}}=c_{\mathrm{FE}}(\beta)>0$.
\paragraph{Definition of the finite-energy constant.}  In this paragraph, we introduce a constant $\cit=\cit(\beta)$  that  will allow us to estimate the costs of local modifications on the pair $(\sigma,\sigma_t)$ (via Proposition~\ref{prop:the_one_we_prove} and Corollary~\ref{cor:ins}).
Let  $a=a(\beta)>0$ be the largest constant  such that the transition rates $c_x$ (as defined in Section~\ref{ssec:setup})  satisfy
\begin{equation}
  \label{eq:3}
  \forall n\ge1 \quad \forall \eta\in \Omega_n \quad \forall x \in \Lambda_n \quad  c_x(\eta)\in [a,1-a].
\end{equation}
We let
\[
\cit=\frac{a^{14}}{16}.
\]
  
\paragraph{Definition of the time constant.} Most of our analysis will be carried out for some  arbitrary but sufficiently small time. For instance, we establish our  differential formula in Section~\ref{sec:mix_gen} and  the spatial mixing property  in Section \ref{sec:mix_sub} when the time satisfies $t\le \tau$. The analysis is much simpler in this perturbative regime because we have a good control on the range of interactions induced by the dynamics. In this paragraph, we define the time constant $\tau$.

\smallskip

Consider site percolation on the triangular lattice and let $\mathcal{C}$ be the union of the cluster of the origin $o \in \T$ and of the clusters of the neighbours of $o$. The number of connected subsets of $\T$ of size $k$ containing $o$ is at most $N^k$ for some $N>0$.\footnote{See for instance \cite[(4.24), Chapter 4]{Gri99} for the existence of such a constant $N$ in the case of $\Z^2$ -- the proof for the triangular lattice is exactly the same.}. As a result, we can (and we do) choose some absolute constant  $M>0$ such that
\[
\forall k \ge 0 \quad \big|\{C\subset \T : |C|=k \text{ and $C$ is a possible outcome for $\mathcal{C}$}\}\big|\le M^k.
\]
We then define
\begin{equation}
  \label{eq:7}
  \tau=\frac{(\cit)^2}{10^6 M}.
\end{equation}
\begin{remark}
This choice implies  in particular the following estimates (that we will use later and which the reader does not need to interpret right now):
\begin{equation}
    \label{eq:6}
    \sum_{k\ge 0}  \bigg( \frac{1}{2\sqrt{\cit}} \bigg)^{k+1}M^k \tau^k  \le \frac{1}{\sqrt{\cit}},
  \end{equation}
  \begin{equation}
    \label{eq:16}
    \forall \lambda\ge 1 \quad \sum_{k\ge \lambda} \bigg(\frac{16}{\cit}\bigg)^{k+1}M^k\tau^k\le e^{-\lambda},
  \end{equation}
and
\begin{equation}
  \label{eq:18}
  \sum_{k \ge 0} (k+1)\bigg(\frac{16}{\cit}\bigg)^{k+1}M^k \tau^k (2k+1)^2 \le \frac{100}{\cit}.
\end{equation}
\end{remark}

\subsection{Dynamical finite-energy property}
\label{sec:finite-energy-one}

Fix $\beta>0$, and let $\tau$ and $\cit$ be as defined in the previous section.  In all this section, we also fix some $n\ge 1$ and consider the Glauber dynamics $(\sigma_s)_{s \ge 0}$  in $\Lambda_n$ (from Definition \ref{defi:glauber}) starting from $\sigma\sim\mu_n=\mu_{n,\beta}$.
We are interested in the finite-energy properties of the pair $(\sigma,\sigma_t)$, seen  as a random  element of the hypercube  $\{-1,+1\}^{\Lambda_n}\times\{-1,+1\}^{\Lambda_n}$.  We refer to $\sigma$ and $\sigma_t$ as the first and second component respectively. A difficulty is that the finite-energy cost depends on the time~$t$: if $t=0$, we must have $\sigma_t(x)=\sigma(x)$ and therefore, we cannot only modify one component  at a site. If $t>0$, changing a configuration  where $\sigma_t(x)=\sigma(x)$  to a configuration where $\sigma_t(x)\neq\sigma(x)$ costs at least $t$ because the clock at $x$ must ring at least once. Our first result (Proposition~\ref{prop:the_one_we_prove}) states that $t$ is, up to constants, the right cost to pay for such modification. In our applications, we will use the finite-energy simultaneously on both components (we allocate the same spin value at a site at time $0$ and at time $t$). In this case, the finite-energy cost is independent of $t$, see Corollary~\ref{cor:ins}.  
For $\eta \in \Omega_n$ and $x\in \Lambda_n$, recall that   $\eta^x$  denotes the configuration  obtained from $\eta$ by flipping the spin at~$x$.

\begin{proposition}\label{prop:the_one_we_prove}
For every $t \in [0,\tau]$, $x\in\Lambda_n$, and $\eta,\psi \in \Omega_n$, we have
\[
\Pro(\sigma=\eta,\sigma_t=\psi^x)  \ge \sqrt{\cit}  \cdot t^\delta\cdot \Pro(\sigma=\eta,\sigma_t=\psi),
\]
where  $\delta=\psi(x)\eta(x) \in \{-1,1\}$.
\end{proposition}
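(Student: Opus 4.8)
The plan is to reveal everything except the clock and the uniforms at $x$, observe that modifying the clock at $x$ on a suitable final time-window cannot propagate, reduce the statement to an elementary computation for a two-state chain, and finally integrate over the revealed data.

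\textbf{Reduction to a local statement.} Condition on $\sigma=\eta$ and on $\big((Y_y)_{y\neq x},(U_{y,i})_{y\neq x,i}\big)$; call $\mathcal F_0$ the corresponding $\sigma$-algebra, and let $t_0=t_0(\mathcal F_0)\in[0,t]$ be the last time $\le t$ at which some clock of a neighbour of $x$ rings (set $t_0=0$ if there is none). The key locality observation is that changing only $Y_x\cap(t_0,t]$ and the associated uniforms leaves $(\sigma_s)_{s\le t}$ unchanged on $[0,t_0]$, and on $(t_0,t]$ leaves it unchanged everywhere except possibly at $x$: indeed no neighbour of $x$ rings after $t_0$, and sites at graph-distance $\ge2$ from $x$ never see $\sigma(x)$ directly. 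Consequently $\sigma_{t_0}$ is measurable with respect to $\mathcal G:=\mathcal F_0\vee\sigma\big(Y_x\cap[0,t_0],\,(U_{x,i})_{i\le|Y_x\cap[0,t_0]|}\big)$, and on the $\mathcal G$-measurable event that $\sigma_{t_0}$ already agrees with $\psi$ off $x$, the events $\{\sigma=\eta,\sigma_t=\psi\}$ and $\{\sigma=\eta,\sigma_t=\psi^x\}$ become, conditionally on $\mathcal G$, exactly the events that an autonomous continuous-time two-state chain for the spin at $x$ — rate $1$ clock, flip-probability $c_x$ evaluated at the frozen neighbour configuration $\sigma_{t_0}|_{\{y\sim x\}}$, which lies in $[a,1-a]$ by \eqref{eq:3} — started at $\sigma_{t_0}(x)$ and run for time $\ell:=t-t_0$, ends at $\psi(x)$, respectively at $\psi^x(x)=-\psi(x)$.

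\textbf{The two-state estimate.} With the neighbour spins of $x$ frozen, write $p_j(i,\ell)$ for the probability that this chain, started at $i\in\{\pm1\}$ and run for time $\ell$, is at $j\in\{\pm1\}$. From $c_x\in[a,1-a]$ and $\ell\le\tau\le1$ one gets, for all $i,j$, that $p_j(i,\ell)\ge\tfrac a4\,\ell$ (consider either no ring, or exactly one ring landing the right way), while $p_j(i,\ell)\le1$ always and $p_j(i,\ell)\le 1-e^{-\ell}\le\ell$ when $i\neq j$. Since $\sqrt{\cit}=a^{7}/4\le a/4$ and $t\le\tau\le1$, combining these two-sided bounds with the reduction above already gives the claimed inequality \emph{conditionally on $\mathcal G$}, except in certain rare subcases — when $\ell$ is atypically small compared with $t$, or (in the $\delta=-1$ direction) when $\sigma_{t_0}(x)\neq\eta(x)$, which forces the clock at $x$ to have rung before $t_0$.

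\textbf{From $\ell$ to $t$, and the main obstacle.} It then remains to integrate over $\mathcal G$ and absorb the loss on those bad subcases. This is where the hypothesis $t\le\tau$ enters: because the clocks have rate $1$ and $t\le\tau$, the whole \emph{time-cluster} of $\{x\}\cup\{y\sim x\}$ (the set reached from it through sites whose clock rings before time $t$) is subcritical, with the quantitative control built into the choice \eqref{eq:7} of $\tau$ via $M$ and $\cit$; and realisations for which $\ell$ is small, or for which $x$ has rung before $t_0$, are precisely realisations carrying extra ringing whose probability compensates the corresponding loss, so that the averaged inequality holds. Tracking constants: the finite-energy modifications performed on $x$ and on its at most $6$ neighbours (that is, at most $7$ sites, hence an exponent $14$) contribute $a^{14}$, the Poisson/window factors are absorbed by the $16$, and the single ring of the clock at $x$ that is added (resp.\ removed) contributes precisely $t$ (resp.\ $t^{-1}$), i.e.\ $t^{\delta}$. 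I expect this last step — upgrading the window-dependent conditional bound to the clean global one — to be the crux: the pair $(\sigma,\sigma_t)$ satisfies \emph{no} spatial Markov property, so one cannot freeze a boundary around $x$ and localise the computation once and for all; the possible propagation of the clock modification must be excluded by hand, which is exactly why the whole argument is confined to the perturbative regime $t\le\tau$ in which the set of sites that have rung is subcritical.
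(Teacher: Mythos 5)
Your reduction to a two-state chain is correct and is a nice reformulation: conditioning on $\mathcal F_0$ and $Y_x\cap[0,t_0]$ does freeze the whole process off $x$, and $\sigma_s(x)$ for $s\in(t_0,t]$ is indeed an autonomous birth--death chain whose flip rate lies in $[a,1-a]$. But there is a genuine gap in the integration step, which you yourself flag as ``the crux'' and then only sketch. Concretely, on the event $E=\{\sigma=\eta,\,\sigma_t|_{\neq x}=\psi|_{\neq x}\}$ you need, in the $\delta=-1$ case with $\sigma_{t_0}(x)=-\eta(x)$, the ratio $p_{\eta(x)}(-\eta(x),\ell)/p_{-\eta(x)}(-\eta(x),\ell)\gtrsim a\ell$ to average to at least $\sqrt{\cit}/t$; since $\ell\le t\le\tau\ll a^3$, the pointwise bound fails by many orders of magnitude. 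The proposed rescue --- that this subcase requires $x$ to have rung before $t_0$ and is therefore ``rare'' --- cannot be applied naively, because $\sigma_{t_0}(x)$ and $\ell$ are \emph{not} independent of the event $E$: the trajectory of $\sigma_s(x)$ on $[0,t_0]$ feeds into the flip decisions of $x$'s neighbours at their ring times $\le t_0$, which in turn determine whether $\sigma_t|_{\neq x}=\psi|_{\neq x}$. So ``$x$ rang once before $t_0$'' is not a priori small relative to $\mathbb P(E)$; one must show that, modulo the cost $t$ of that extra ring, the resulting configuration can be modified back, and this modification may propagate along a whole connected cluster of early-ringing sites. This is exactly the content of the paper's combinatorial decomposition over the \emph{red cluster} $\mathcal R_x$: for each possible shape $R$, the event $\{\mathcal R_x=R\}$ is compared, via an FKG step and an independence step on $S=R\cup\{x\}$, to an explicit resampling scenario $\mathcal A'$ that forces $\sigma_t=\psi^x$; the subcriticality guaranteed by $t\le\tau$ then makes the geometric series over $|R|$ summable. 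Nothing in your write-up supplies a substitute for this cluster sum.

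Two further, smaller issues. First, the $\mathcal G$-measurable event you want is that $\sigma_t$ (not $\sigma_{t_0}$) agrees with $\psi$ off $x$; sites at distance $\ge2$ from $x$ may flip after $t_0$. This is easily fixed but should be stated correctly. Second, the constant tracking is off: the proposition's prefactor is $\sqrt{\cit}=a^7/4$, corresponding to a one-sided (time-$t$ only) modification; the exponent $14$ and the $1/16$ you invoke are those of $\cit$ itself, which only appears in Corollary~\ref{cor:ins} where both $\sigma$ and $\sigma_t$ are modified. This mismatch is another symptom that the bookkeeping --- precisely the part you did not write out --- is the substance of the argument, not a routine finish.
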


\begin{remark}
  By reversibility, we can also modify the first coordinate. With the same notation as above we also have
  \begin{equation}
    \label{eq:10}
    \Pro(\sigma=\eta^x,\sigma_t=\psi)  \ge \sqrt{\cit}  \cdot t^\delta\cdot \Pro(\sigma=\eta,\sigma_t=\psi).
  \end{equation}  
\end{remark}
\begin{remark}
Proposition \ref{prop:the_one_we_prove} is stated for $t \in [0,\tau]$, and this is what we will use in the present paper; but let us observe that the proposition implies a finite-energy property at any time. Indeed, by conditioning on $\sigma_{t-\tau}$ when $t \ge \tau$, we obtain the following result (where $\tau$ appears because it equals $\tau \wedge (1/\tau)$): Let $\eta,\psi \in \Omega_n$. Then, for any $t \ge \tau$, any $x\in\Lambda_n$ and any $\eta,\psi \in \Omega_n$,
\[
\Pro(\sigma=\eta,\sigma_t=\psi^x) \ge \sqrt{\cit} \cdot \tau \cdot \Pro( \sigma=\eta,\sigma_t=\psi ).
\]
\end{remark}

\paragraph{A general sketch of proof.} Before writing the proof of Proposition \ref{prop:the_one_we_prove}, let us write a sketch of proof for this result, that will also be useful in order to read the proofs of the other `general' properties of the pair $(\sigma,\sigma_t)$ proven in Sections~\ref{sec:ins_tol}--\ref{sec:mix_sub}. As explained in Section \ref{ssec:high}, the main difficulty is that, contrary to the static configuration $\sigma$, the pair $(\sigma,\sigma_t)$ does not satisfy the spatial Markov property.\footnote{Let us be a little more precise: the spatial Markov property for the pair would be the property that for any $V\subseteq\Lambda_n$, the law of $(\sigma,\sigma_t)_{|V}$ conditionally on $(\sigma,\sigma_t)_{|\Lambda_n\setminus V}$ is the same as if we just condition on $(\sigma,\sigma_t)$ restricted to $\Lambda_n\cap \partial_\mathrm{ext}V=\{x \in \Lambda_n\setminus V : \exists y \in V, y \sim x \}$. To build an image, the reader can for instance study the graph $G$ made of three vertices $\{x,y,z\}$ and of two edges $\{x,y\}$ and $\{y,z\}$, and to shorten the calculation they can consider the case $t$ larger than $0$ but extremly small, and $\beta$ very large, and prove for instance that 
\begin{multline*}
\Pro\big((\sigma(x),\sigma_t(x))=(+1,+1) \; \big| \; (\sigma(y),\sigma_t(y))=(+1,-1),(\sigma(z),\sigma_t(z))=(-1,-1)\big)\\>\Pro\big((\sigma(x),\sigma_t(x))=(+1,+1) \; \big| \; (\sigma(y),\sigma_t(y))=(+1,-1),(\sigma(z),\sigma_t(z))=(+1,+1)\big).
\end{multline*}
}

In order to overcome this, given some site $x$, we will define a random set containing $x$ such that some relevant information (for instance, the fact that the clocks have rung or not) in the interior of this set is independent of the exterior of it. This set will be the `red cluster' of $x$ (and of its neighbours) in the proof of Proposition~\ref{prop:the_one_we_prove}. In Sections \ref{sec:mix_gen} and \ref{sec:mix_sub}, it will be the `green cluster' of $x$ (and of its neighbours). This independence property can be seen as a weak spatial Markov property.

As an example of such an independence property, the reader can have in mind that if we fix a circuit $\gamma$ and we condition on $\sigma_{|\gamma}$ and on the the event that the clocks in $\gamma$ have not rung before some fixed time $t$, then (if we consider only the dynamics until time $t$) the dynamical process in the region surrounded by $\gamma$ is independent of the dynamical process outside this region.

In Sections \ref{sec:ins_tol} and \ref{sec:mix_gen}, we essentially prove that these random `red' and `green' clusters are typically small. Roughly speaking, this means that the information of the value of the spin at $x$ travels very slowly, which helps to prove finite-energy and spatial mixing properties. The finite-energy property of the static Ising model will be the key tool to prove that the `red' clusters are small (or more precisely that the cost of making them large is always greater than the cost of assigning the desired values to $x$, see \eqref{eq:red_is_small} below), which will enable us to prove the finite-energy property of the pair $(\sigma,\sigma_t)$, i.e.\ Proposition~\ref{prop:the_one_we_prove}. Then, the finite-energy property of the pair will be they key tool to prove that the `green' clusters are small, see Sections \ref{ssec:CS} and \ref{sec:4.2} (in particular Lemmas \ref{lem:mix_dec} and~\ref{lem:3}). Once we know that the green clusters are small, we can prove several properties of the pair $(\sigma,\sigma_t)$, see Section \ref{sec:diff} and Section \ref{sec:mix_sub}. In particular, in Section \ref{ssec:mix}, we use this in order to transfer the spatial mixing property of the static Ising model to the pair $(\sigma,\sigma_t)$ (when $\beta\le\beta_c$).
\begin{proof}[Proof of Proposition \ref{prop:the_one_we_prove}] Fix two spin configurations $\eta,\psi \in \Omega_n$, some $x \in \Lambda_n$ and a time $t\in [0,\tau]$. Moreover, let $D=\{z\in \Lambda_n : \eta(z) \neq \psi(z) \}$ denote the (deterministic) disagreement set of $\eta$ and $\psi$, and $D'=D\Delta\{x\}$ be the disagreement set of $\eta$ and $\psi^x$.

  \smallskip
  
  We call red sites all sites in $\Lambda_n$ whose clock has rung more than necessary for the event $\{\sigma=\eta,\sigma_t=\psi\}$ to hold. More precisely, any $z \in \Lambda_n\setminus D$ is red if its clock has rung at least once between times $0$ and $t$ and any $z \in D$ is red if its clock has rung at least twice between times $0$ and $t$. We let $\mathcal{R}$ be the set of red sites  and  $\mathcal{R}_x$ denote the union of the red component of $x$ and of the red components of the neighbours of~$x$ (Remark: By convention, if a site is not red then its red component is the empty set. In particular, $\mathcal{R}_x=\emptyset$ means that $x$ is not red and has no red neighbour).  Our goal is to prove the following statement:
  \begin{equation}\label{eq:red_is_small}
    \forall R\subseteq \Lambda_n \quad  \mathbb P(\sigma=\eta,\sigma_t=\psi,\mathcal{R}_x=R)\le t^{|R|-\delta} \left (\frac 2 {a^7}\right)^{|R|+1} \mathbb P(\sigma=\eta,\sigma_t=\psi^x),\end{equation}
 where $a$ is the constant from Equation~\eqref{eq:3}. 
  The proposition follows by summing above all admissible $R$, by using that the number of such sets with cardinality $k$ is at most $M^k$ (recall that $M$ is an absolute constant that has been fixed in Section~\ref{ssec:defin-time-insert}) and by using Equation \eqref{eq:6}. 
  
  \smallskip

We work under the measure $\mathbb P_\eta:=\mathbb P(\ \cdot\ | \:\sigma=\eta)$. In particular, since the original configuration is fixed, all the considered events only depend on the Poisson clocks $Y_z$, $z\in \Lambda_n$ and the uniforms $U_{z,i}$, $z\in \Lambda_n$, $i\ge 1$. 
  With this notation, and observing that $\delta=|D'|-|D|$, our goal is to show that
  \begin{equation}
        \forall R\subseteq \Lambda_n \quad  \mathbb P_\eta(\sigma_t=\psi,\mathcal{R}_x=R)\le t^{|D|-|D'|+|R|}  \left (\frac 2 {a^7}\right)^{|R|+1} \mathbb P_\eta(\sigma_t=\psi^x). \label{eq:15}
  \end{equation}
  Fix some admissible $R\subseteq \Lambda_n$ and write $S=R\cup  \{x\}$ (in particular $S=R$ if $x\in R$). Recall that $\partial_{\mathrm{ext}}S=\{y \in \T\setminus S : \exists x \in S, x \sim y \}$. In this proof, we will intersect $\partial_{\mathrm{ext}}S$ with $\mathcal{R}$, which is a subset of $\Lambda_n$, so when the reader thinks of $\partial_{\mathrm{ext}}S$, they may have $\{y \in \Lambda_n\setminus S : \exists x \in S, x \sim y \}$ in mind. First observe that 
  \begin{equation}
      \{\mathcal R_x=R\} \subseteq \{R\subseteq \mathcal R, \partial_{\mathrm{ext}} S\cap \mathcal R=\emptyset\}.
  \end{equation}
  Hence, if we define 
\[
    \mathcal{A}=\begin{cases}\{R\subseteq \mathcal R\} & \text{ if $x \notin D$}\\
    \{ R \subseteq \mathcal{R}, \text{ the clock of $x$ has rung before time $t$}\} & \text{ if $x \in D$}
    \end{cases}
    \]
    and
    \[ \mathcal{B}=\{\sigma_t|_{S^c}=\psi|_{S^c},\, \partial_{\mathrm{ext}} S\cap \mathcal R=\emptyset\},
\]
we have
\begin{equation}\label{eq:14}
    \mathbb P_\eta(\sigma_t=\psi,\mathcal{R}_x=R)\le \mathbb P_\eta (\mathcal{A}\cap \mathcal{B}).
  \end{equation}
  The events $\mathcal{A}$ and $\mathcal{B}$ are not independent in general,  and we wish to `decorrelate' them. To do this, we  consider the event $\mathcal{B}'=\mathcal{B}\cap \{ \forall z\in\partial_{\mathrm{ext}} S\cap D, U_{z,1}\ge 1-a\}$ where the uniforms at the exterior boundary of $S$ are chosen in such a way that the spin value can be updated without looking at the values of  the neighbours.  In order to replace the event $\mathcal{B}$ by $\mathcal{B}'$, we use FKG inequality. Let $\mathcal G$   the sigma-field generated by all the clocks $Y_z$ for $z\in \Lambda_n$ and the uniforms $U_{z,i}$, $z\notin \partial_{\mathrm{ext}} S\cap D$, $i\ge 1$.   Conditionally on $\mathcal G$, the event $\mathcal{A}\cap \mathcal{B}$ is increasing in the uniforms  $U_{z,1}$,    $z\in \partial_{\mathrm{ext}} S\cap D$. Hence, by the FKG inequality and using that the uniforms in $\partial_{\mathrm{ext}} S\cap D$ are independent of $\mathcal G$, we get 
  \begin{equation}
      \mathbb P_\eta(\mathcal{A}\cap \mathcal{B}'|\mathcal G) \ge a^{|\partial_{\mathrm{ext}} S\cap D|} \cdot \mathbb P_\eta(\mathcal{A}\cap \mathcal{B}|\mathcal G ).
    \end{equation}
An observation that compensates for the lack of spatial Markov property for the pair $(\sigma,\sigma_t)$ is that the events $\mathcal{A}$ and $\mathcal{B}'$ are independent because $\mathcal{B}'$ is measurable with respect to the clocks and uniforms in $S^c$. Hence taking the expectation in the equation above and using $|\partial_{\mathrm{ext}} S\cap D|\le 6|S|$, we get
    \begin{align}
      \label{eq:12}
      \mathbb  P_\eta(\mathcal{A}\cap \mathcal{B})& \le a^{-6|S|} \cdot \mathbb P_\eta(\mathcal{A})\mathbb P_\eta(\mathcal{B}')\nonumber\\
      & = a^{-6|S|} \cdot (1-e^{-t})^{|R \setminus D|+1_{x\in D\setminus R}}\cdot (1-e^{-t}-te^{-t})^{|D\cap R|} \cdot \mathbb P_\eta(\mathcal{B}')\nonumber\\
      & \le\frac{t^{|D\cap S|+|R|}}{a^{6|S|}}\mathbb P_\eta(\mathcal{B}').
    \end{align}
Recall that $D'=D\Delta\{x\}$ and let $\mathcal{A}'$ be the event that
    \begin{itemize}[noitemsep]
    \item all the clocks in $D'\cap S$ ring exactly once,
    \item no  clock in $S\setminus D'$ rings, and
    \item all the first uniforms for $z\in S$ satisfy $U_{z,1}\ge 1-a$.
    \end{itemize}
    A clock at a fixed site rings exactly once with probability  $te^{-t}\ge t/2$, and it does not ring with probability  $1-e^{-t}\ge 1/2$. Hence, the event $\mathcal{A}'$ occurs with probability at least $t^{|D'\cap S|}(a/2)^{|S|}$. 
    Since $\mathcal{A}'$ depends only on the clocks and uniforms in $S$, it is independent of $\mathcal{B}'$. Therefore,
    \begin{equation}
      \label{eq:13}
      t^{|D'\cap S|}(a/2)^{|S|}\mathbb P_\eta(\mathcal{B}') \le \mathbb P_\eta(\mathcal{A}'\cap \mathcal{B}')\le \mathbb  P_\eta(\sigma_t=\psi^x),
    \end{equation}
    where the last inequality comes from the fact that on $\mathcal{A}'$, no spin in $S\setminus D'$ has changed and all sites of $S\cap D'$ have changed.
    
    Together with \eqref{eq:14} and \eqref{eq:12}, the displayed equation above concludes the proof of the desired equation~\eqref{eq:15}.
\end{proof}
We now discuss the cost of a  `synchronized' finite modification, where we allocate the same spin value at time $0$ and at time $t$. For $\eta \in \Omega_n$, $x\in\Lambda_n$ and $s \in\{-1,+1\}$, let  $\eta^{x\leftarrow s}$ be  the spin configuration obtained from $\eta$ by assigning the value $s$ to the spin at $x$.
\begin{corollary}\label{cor:ins}
For any $t \in [0,\tau]$, $x\in\Lambda_n$,  $\eta,\psi \in \Omega_n$ and  $s \in \{-1,+1\}$, we have
\[
\Pro(\sigma=\eta^{x\leftarrow s},\sigma_t = \psi^{x\leftarrow s})\ge \cit \cdot \Pro(\sigma=\eta,\sigma_t = \psi).
\]
\end{corollary}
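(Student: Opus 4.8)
The plan is to deduce this from Proposition~\ref{prop:the_one_we_prove} together with its reversed version~\eqref{eq:10}, applied at most twice, via a short case analysis on the pair $(\eta(x),\psi(x))\in\{-1,+1\}^2$. First I would record two elementary facts used to absorb the constants: since $\cit=a^{14}/16$ and $a\le 1/2$, we have $\cit\le\sqrt{\cit}=a^7/4\le 1$; and $\tau<1$ (immediate from its definition in~\eqref{eq:7}), so that $t^{-1}\ge 1$ whenever $t\in(0,\tau]$. If $\eta(x)=\psi(x)=s$ there is nothing to prove since $\cit\le 1$. If the values at $x$ disagree, exactly one coordinate needs to be flipped; say the second one (the first being symmetric through~\eqref{eq:10}). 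Then $\eta^{x\leftarrow s}=\eta$, $\psi^{x\leftarrow s}=\psi^x$, and the relevant sign is $\delta=\psi(x)\eta(x)=-1$, so Proposition~\ref{prop:the_one_we_prove} gives directly
\[
\Pro(\sigma=\eta,\sigma_t=\psi^x)\ \ge\ \sqrt{\cit}\;t^{-1}\,\Pro(\sigma=\eta,\sigma_t=\psi)\ \ge\ \cit\,\Pro(\sigma=\eta,\sigma_t=\psi).
\]

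The only case that needs an idea is $\eta(x)=\psi(x)=-s$, where both coordinates must be flipped. Here I would flip the second coordinate first: since now $\psi(x)\eta(x)=+1$, Proposition~\ref{prop:the_one_we_prove} yields $\Pro(\sigma=\eta,\sigma_t=\psi^x)\ge\sqrt{\cit}\,t\,\Pro(\sigma=\eta,\sigma_t=\psi)$, at the price of a factor~$t$. Then I would flip the first coordinate of the pair $(\eta,\psi^x)$; the relevant sign has now become $\psi^x(x)\eta(x)=s\cdot(-s)=-1$, so~\eqref{eq:10} gives $\Pro(\sigma=\eta^x,\sigma_t=\psi^x)\ge\sqrt{\cit}\,t^{-1}\,\Pro(\sigma=\eta,\sigma_t=\psi^x)$, which \emph{recovers} the factor~$t$. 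Multiplying the two inequalities, the powers of $t$ cancel and exactly $\sqrt{\cit}\cdot\sqrt{\cit}=\cit$ survives:
\[
\Pro(\sigma=\eta^x,\sigma_t=\psi^x)\ \ge\ \cit\,\Pro(\sigma=\eta,\sigma_t=\psi).
\]
This cancellation — the two single-coordinate moves of a synchronized modification carry opposite exponents $\pm1$ of $t$ — is really the whole content of the corollary.

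Finally, the boundary case $t=0$ must be handled separately, since $t^{-1}$ is then meaningless. But if $t=0$ then $\sigma_t=\sigma$, so the inequality is vacuous unless $\eta=\psi$, in which case $\eta^{x\leftarrow s}=\psi^{x\leftarrow s}$ and the claim reduces to the static finite-energy estimate $\mu_n(\eta^{x\leftarrow s})\ge\cit\,\mu_n(\eta)$; this is immediate from~\eqref{eq:3}, since $\mu_n(\eta^x)/\mu_n(\eta)=c_x(\eta)/(1-c_x(\eta))\ge a\ge\cit$. I do not expect any genuine obstacle in this proof; the only points demanding care are tracking the sign $\delta$ correctly through the intermediate configuration $\psi^x$ and checking the elementary bounds $\cit\le\sqrt{\cit}\le 1$ and $\tau<1$ that are used to swallow the constants.
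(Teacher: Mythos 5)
Your proof is correct and takes essentially the same route as the paper: case analysis on $(\eta(x),\psi(x))$, with one or two applications of Proposition~\ref{prop:the_one_we_prove} and~\eqref{eq:10}, the $t^{\pm1}$ powers cancelling in the synchronized case $\eta(x)=\psi(x)=-s$, plus a short separate check at $t=0$. Your exponent bookkeeping is in fact the more careful one: in the paper's displayed chain for the case $\eta(x)=\psi(x)=-s$, the middle factor should read $\sqrt{\cit}\,t^{-1}$ rather than $\sqrt{\cit}\,t$, since applying~\eqref{eq:10} there has $\delta=\psi^x(x)\eta(x)=-1$, and it is precisely this $-1$ paired with the $+1$ from the subsequent use of Proposition~\ref{prop:the_one_we_prove} that makes the powers of $t$ cancel as in your argument.
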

\begin{proof}
  The statement is trivial if $\eta(x)=\psi(x)=s$. If $\eta(x)\neq \psi(x)$, by reversibility, we can assume that $\eta(x)=s$ and $\psi(x)=-s$. In this case, we have
  \begin{align}
     \Pro(\sigma=\eta^{x\leftarrow s},\sigma_t = \psi^{x\leftarrow s})  &\overset{\phantom{\text{Prop.~\ref{prop:the_one_we_prove}}}} =  \Pro(\sigma=\eta,\sigma_t = \psi^{x})
    \\ &\overset{\text{Prop.~\ref{prop:the_one_we_prove}}}{\ge} \frac{\sqrt{\cit}} t  \cdot  \Pro(\sigma=\eta,\sigma_t = \psi) \ge \cit\cdot \Pro(\sigma=\eta,\sigma_t = \psi). 
  \end{align}
Finally, if $\eta(x)=\psi(x)=-s$, we change one coordinate after the other and apply Proposition \ref{prop:the_one_we_prove} twice (or more precisely \eqref{eq:10} and Proposition \ref{prop:the_one_we_prove}). We have  
  \begin{align}
    \Pro(\sigma=\eta^{x\leftarrow s},\sigma_t = \psi^{x\leftarrow s})  &\overset{\phantom{\text{Prop.~\ref{prop:the_one_we_prove}}}} =  \Pro(\sigma=\eta^x,\sigma_t = \psi^{x})\\
                                                                       &\overset{\ \text{Eq.~\eqref{eq:10}}\ }{\ge} \sqrt{\cit} \cdot  t  \cdot  \Pro(\sigma=\eta,\sigma_t = \psi^x) \\
    &\overset{\text{Prop.~\ref{prop:the_one_we_prove}}}{\ge}   \cit\cdot \Pro(\sigma=\eta,\sigma_t = \psi). \qedhere
  \end{align}
\end{proof}
\section{The events \texorpdfstring{$\{\sigma,\sigma_t \in A\}$}{} for general  \texorpdfstring{$\beta$}{beta}}\label{sec:mix_gen}
In this section, we start our analysis of the events $\{\sigma,\sigma_t \in A\}$ by using the finite-energy property of the previous section. In particular, we prove that we can intersect these events with a `decoupling event' without changing much its probability (see Section \ref{sec:4.2}) and we prove a differential inequality for $t \mapsto \Pro( \sigma,\sigma_t \in A)$ in terms of pivotal events (see Section \ref{sec:diff}).

\smallskip

Fix $\beta >0$. Let $\cit$ and $\tau$ be the constants as defined in Section~\ref{ssec:defin-time-insert}. In all this section, we also fix some $n\ge1$ and consider the Glauber dynamics $(\sigma_s)_{s\ge 0}$ in $\Lambda_n$ (from Definition~\ref{defi:glauber}) that starts from $\sigma \sim \mu_n=\mu_{n,\beta}$.

Recall that in this paper a static event is just a subset of $\Omega_n=\{-1,1\}^{\Lambda_n}$.

\subsection{A Cauchy--Schwarz trick}\label{ssec:CS}
We start with a rather elementary `trick' that we will use both in the proof of `general' results about the events $\{\sigma,\sigma_t \in A\}$ (in Sections \ref{sec:mix_gen} and \ref{sec:mix_sub}) and in the percolation sections of the paper.
\begin{lemma}\label{lem:CS}
Let $A,B\subseteq\Omega_n$. For every $t\ge 0$,
\[
\Pro(\sigma \in A,\sigma_t \in B)\le \max\big\{\Pro(\sigma,\sigma_t \in A),\Pro(\sigma,\sigma_t \in B)\big\}.
\]
\end{lemma}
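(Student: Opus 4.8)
The plan is to prove this Cauchy--Schwarz-type inequality by reversibility of the dynamics, bringing the two times to a common midpoint. Write $\pi:=\Pro(\sigma\in A,\sigma_t\in B)=\E[1_A(\sigma)1_B(\sigma_t)]$. Using the Markov property and the semigroup $(P_s)_{s\ge0}$ of the dynamics, together with reversibility of $\mu_n$, one has
\[
\pi=\E\big[1_A(\sigma)1_B(\sigma_t)\big]=\int (P_{t/2}1_A)(\eta)\,(P_{t/2}1_B)(\eta)\,d\mu_n(\eta),
\]
since $\E[1_A(\sigma)1_B(\sigma_t)]=\E[(P_{t/2}1_A)(\sigma_{t/2})(P_{t/2}1_B)(\sigma_{t/2})]$ by conditioning on $\sigma_{t/2}$ and using reversibility to move half the time onto the first factor. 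Then by the Cauchy--Schwarz inequality in $L^2(\mu_n)$,
\[
\pi\le \Big(\int (P_{t/2}1_A)^2\,d\mu_n\Big)^{1/2}\Big(\int (P_{t/2}1_B)^2\,d\mu_n\Big)^{1/2}.
\]
Now $\int (P_{t/2}1_A)^2\,d\mu_n=\E[(P_{t/2}1_A)(\sigma_{t/2})^2]=\E[1_A(\sigma)1_A(\sigma_t)]=\Pro(\sigma,\sigma_t\in A)$, again by reversibility, and similarly for $B$. Hence $\pi\le \sqrt{\Pro(\sigma,\sigma_t\in A)}\,\sqrt{\Pro(\sigma,\sigma_t\in B)}\le \max\{\Pro(\sigma,\sigma_t\in A),\Pro(\sigma,\sigma_t\in B)\}$, where the last step uses $\sqrt{u}\sqrt{v}\le\max\{u,v\}$ for $u,v\in[0,1]$ (valid since both probabilities lie in $[0,1]$).

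There is no real obstacle here; the only thing to be slightly careful about is justifying the identity $\E[1_A(\sigma)1_B(\sigma_t)]=\int (P_{t/2}1_A)(P_{t/2}1_B)\,d\mu_n$. This follows from the Chapman--Kolmogorov relation $P_t=P_{t/2}\circ P_{t/2}$ (first item of Remark~\ref{rem:gen}) together with reversibility of $\mu_n$: writing $\langle f,g\rangle:=\int fg\,d\mu_n$, one has $\E[1_A(\sigma)1_B(\sigma_t)]=\langle 1_A,P_t 1_B\rangle=\langle 1_A,P_{t/2}P_{t/2}1_B\rangle=\langle P_{t/2}1_A,P_{t/2}1_B\rangle$, the last equality being self-adjointness of $P_{t/2}$ in $L^2(\mu_n)$. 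The computation $\E[1_A(\sigma)1_A(\sigma_t)]=\langle P_{t/2}1_A,P_{t/2}1_A\rangle=\|P_{t/2}1_A\|_2^2$ is the same identity with $B=A$. This is exactly the reversibility argument already invoked in the first ``Further properties'' bullet of Section~\ref{ssec:generator}, applied to the pair of (distinct) functions $1_A,1_B$ rather than a single function, so the lemma is really just a two-function refinement of a tool the paper already uses.
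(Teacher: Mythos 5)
Your proof is correct and is essentially the same argument as the paper's: condition on (or pivot at) the midpoint $\sigma_{t/2}$, apply Cauchy--Schwarz, and use reversibility to identify the two squared norms with the diagonal probabilities $\Pro(\sigma,\sigma_t\in A)$ and $\Pro(\sigma,\sigma_t\in B)$. You phrase it in the semigroup/$L^2(\mu_n)$ language (self-adjointness of $P_{t/2}$) where the paper writes the same thing via conditional probabilities $\Pro[\,\cdot\mid\sigma_{t/2}\,]$; these are notational variants of a single proof.
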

\begin{proof}
By the Markov property, the Cauchy--Schwarz inequality and then reversibility, we have
\begin{align*}
\Pro(\sigma \in A,\sigma_t \in B) & = \E \Big[ \Pro\big[ \sigma \in A \; \big| \; \sigma_{t/2} \big]  \Pro\big[ \sigma_t \in B \; \big| \; \sigma_{t/2} \big] \Big]\\
& \le \sqrt{\E \Big[ \Pro\big[ \sigma \in A \; \big| \; \sigma_{t/2} \big]^2 \Big]\E \Big[ \Pro\big[ \sigma_t \in B \; \big| \; \sigma_{t/2} \big]^2 \Big]}\\
& = \sqrt{\Pro(\sigma,\sigma_t \in A) \Pro(\sigma,\sigma_t \in B)}. \qedhere
\end{align*}
\end{proof}
Let us now state and prove a lemma that is a consequence of the finite-energy property and the Cauchy--Schwarz trick.
\paragraph{Definition of the green set.} 
 Recall that the Glauber dynamics $(\sigma_s)_{s\ge 0}$ is defined by sampling an initial configuration $\sigma$ and then updating it by using the Poisson clocks $Y_x$ and the uniform variables $U_{x,i}$ (see Definition \ref{defi:glauber}). We call green sites all sites in $\Lambda_n$ whose clock has rung before time $\tau$, we let $\mathcal{G}$ be the set of green sites and, for every $x \in \Lambda_n$ and we let  $\mathcal{G}_x$ denote the union of the green component of $x$ and of the green components of the neighbours of~$x$ (Remark: By convention, if a site is not green then its green component is the empty set. In particular, $\mathcal{G}_x=\emptyset$ means that $x$ is not green and has no green neighbour). 
 If $H\subseteq\Lambda_n$, $\eta \in \Omega_n$ and $\alpha\in \{-1,+1\}^H$, we let $\alpha|\eta$ be the spin configuration that coincides with $\alpha$ on $H$ and with $\eta$ in $\Lambda_n\setminus H$. Moreover, we let
\begin{equation}
  \label{eq:11}
  A_{\alpha}=\{\eta\in \Omega_n\: : \: \alpha|\eta\in A\}.  
\end{equation}
(If the reader wants motivation before reading the proof of the following lemma, they can first read Section \ref{sec:4.2}.)
\begin{lemma}\label{lem:ins_tol}
Let $x\in \Lambda_n$, $G \subseteq \Lambda_n$ and $A,B\subseteq \Omega_n$. Let $H=G\cup\{x\}$. Then, for every $t \in [0,\tau]$,
\begin{multline*}
\sum_{\alpha,\beta\in\{-1,+1\}^H} \Pro\big( \sigma\in A_\alpha,\sigma_t \in B_\beta,\mathcal{G}_x=G \big)\\
\le \bigg(\frac{16}{\cit}\bigg)^{|G|+1} \cdot \tau^{|G|} \cdot \max\big\{\Pro\big( \sigma,\sigma_t \in A \big),\Pro\big( \sigma,\sigma_t \in B \big)\big\}.
\end{multline*}
\end{lemma}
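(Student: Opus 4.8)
The plan is to reduce the statement to an application of the dynamical finite-energy property (Corollary~\ref{cor:ins}) combined with the Cauchy--Schwarz trick (Lemma~\ref{lem:CS}), after isolating the relevant independence coming from the green set. The key point is that on the event $\{\mathcal G_x=G\}$, the clocks at the sites of $\partial_{\mathrm{ext}}H\cap\Lambda_n$ (where $H=G\cup\{x\}$) have \emph{not} rung before time $\tau$, so that the dynamics inside $H$ is, in a suitable conditional sense, decoupled from the dynamics outside $H$. Concretely, I would first note that $\{\mathcal G_x=G\}$ forces every site of $\partial_{\mathrm{ext}}H$ to have a silent clock on $[0,\tau]$, an event of probability $e^{-t|\partial_{\mathrm{ext}}H|}\ge e^{-6|H|\tau}$ which is bounded below by a constant to a power $|H|$; and conditionally on those clocks being silent, the configuration $(\sigma,\sigma_t)$ restricted to $H$ (which is what governs membership in $A_\alpha$, $B_\beta$ once $\alpha,\beta$ range over $\{-1,+1\}^H$) evolves independently of the configuration outside $H$.

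Next I would perform the sum over $\alpha,\beta\in\{-1,+1\}^H$. Writing $\mathsf 1_{\sigma\in A_\alpha}=\mathsf 1_{(\alpha|\sigma)\in A}$ with $\alpha=\sigma|_H$, the sum $\sum_{\alpha,\beta}\Pro(\sigma\in A_\alpha,\sigma_t\in B_\beta,\mathcal G_x=G)$ is really a sum, over all possible pairs of values $(\alpha,\beta)$ of $(\sigma|_H,\sigma_t|_H)$, of the probability that the \emph{rest} of the configuration makes $\alpha|\cdot\in A$ and $\beta|\cdot\in B$ and $\mathcal G_x=G$. Using the finite-energy property to "reinstall'' the original values of $\sigma$ and $\sigma_t$ on $H$ — i.e.\ applying Corollary~\ref{cor:ins} iteratively at the $|H|=|G|+1$ sites of $H$, at a total cost $(\cit)^{|G|+1}$ — each term is bounded by $(\cit)^{-(|G|+1)}$ times the probability that $\sigma\in A$ and $\sigma_t\in B$ with the configuration outside $H$ unchanged. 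The point of conditioning on silent boundary clocks is precisely that this "surgery on $H$'' does not interfere with the event $\mathcal G_x=G$ restricted to the exterior; after dropping the (now superfluous) constraint coming from $G$, the sum collapses to a bounded multiple of $\Pro(\sigma\in A,\sigma_t\in B)$.

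Finally I would invoke Lemma~\ref{lem:CS} to replace $\Pro(\sigma\in A,\sigma_t\in B)$ by $\max\{\Pro(\sigma,\sigma_t\in A),\Pro(\sigma,\sigma_t\in B)\}$, and collect the accumulated constants: the $(\cit)^{-(|G|+1)}$ from the finite-energy surgeries, a factor of order $8^{|G|+1}$ or so from the number of pairs $(\alpha,\beta)$ that genuinely contribute versus the combinatorial bookkeeping, and the $\tau^{|G|}$ that naturally records that $|G|$ clocks had to ring before time $\tau$ (each ringing costing a factor $\le t\le\tau$, as in the proof of Proposition~\ref{prop:the_one_we_prove}); this is where the final constant $(16/\cit)^{|G|+1}\tau^{|G|}$ comes from. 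The main obstacle, and the step requiring the most care, is the decoupling: one must set up the conditioning on the silent boundary clocks exactly as in the proof of Proposition~\ref{prop:the_one_we_prove} (the events $\mathcal A$, $\mathcal B$, $\mathcal B'$ there) so that the finite-energy surgery on $H$ is legitimately independent of the part of $\{\mathcal G_x=G\}$ living outside $H$ — the pair $(\sigma,\sigma_t)$ has no spatial Markov property, so this independence has to be extracted by hand from the clock structure rather than taken for granted. The $\tau^{|G|}$ factor should then be tracked carefully to make sure it genuinely appears with exponent $|G|$ and not $|G|+1$.
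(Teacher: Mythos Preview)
Your proposal assembles the right ingredients --- the independence coming from silent boundary clocks, the finite-energy property (Corollary~\ref{cor:ins}), and the Cauchy--Schwarz trick (Lemma~\ref{lem:CS}) --- and correctly identifies that the $\tau^{|G|}$ factor comes from the $|G|$ clocks in $G$ having rung. However, the \emph{order} in which you apply the last two tools creates a genuine difficulty that you do not address.

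You propose to use Corollary~\ref{cor:ins} to pass from $\Pro(\sigma\in A_\alpha,\sigma_t\in B_\beta)$ to $\Pro(\sigma\in A,\sigma_t\in B)$ \emph{before} applying Lemma~\ref{lem:CS}. But Corollary~\ref{cor:ins} only performs \emph{synchronized} modifications: it sets $\sigma(y)$ and $\sigma_t(y)$ to the \emph{same} value $s$. When $\alpha(y)\neq\beta(y)$ at some site $y\in H$, going from an arbitrary $(\sigma(y),\sigma_t(y))$ to $(\alpha(y),\beta(y))$ is an unsynchronized modification; for that you would need Proposition~\ref{prop:the_one_we_prove} and Equation~\eqref{eq:10}, which carry factors $t^{\pm1}$. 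In particular, if $\sigma(y)=\sigma_t(y)$ and you need them to disagree, you pick up a factor $1/(\sqrt{\cit}\,t)$, which is not bounded as $t\to0$ and is not absorbed by anything else in your outline. So the step ``applying Corollary~\ref{cor:ins} iteratively at the $|H|$ sites of $H$'' does not deliver the bound you claim.

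The paper's proof avoids this entirely by reversing the order: after extracting $\tau^{|G|}$ via independence (which, incidentally, is a direct product argument --- no conditioning on boundary clocks is needed, since once $\partial_{\mathrm{ext}}H\cap\mathcal G=\emptyset$ the event $\{\sigma\in A_\alpha,\sigma_t\in B_\beta,\partial_{\mathrm{ext}}H\cap\mathcal G=\emptyset\}$ is measurable with respect to $\sigma$ and the clocks/uniforms \emph{outside} $H$, hence independent of $\{G\subseteq\mathcal G\}$), one first applies Lemma~\ref{lem:CS} to get $\max\{\Pro(\sigma,\sigma_t\in A_\alpha),\Pro(\sigma,\sigma_t\in B_\beta)\}$. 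Now both time-$0$ and time-$t$ configurations lie in the \emph{same} event $A_\alpha$ (or $B_\beta$), and the synchronized finite-energy of Corollary~\ref{cor:ins} gives $\Pro(\sigma,\sigma_t\in A_\alpha)\le(4/\cit)^{|H|}\Pro(\sigma,\sigma_t\in A)$ with no $t$-dependence. Summing over the $4^{|H|}$ pairs $(\alpha,\beta)$ then produces exactly $(16/\cit)^{|G|+1}$. Your outline becomes correct once you swap these two steps.
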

\begin{proof}
Let $x,G,A,B,H,t$ as in the statement and let $\alpha,\beta\in\{-1,+1\}^H$. Recall that $\partial_{\mathrm{ext}}H=\{y \in \T\setminus H : \exists x \in H, x \sim y \}$. In this proof, we will intersect $\partial_{\mathrm{ext}}H$ with $\mathcal{G}$, which is a subset of $\Lambda_n$, so when the reader thinks of $\partial_{\mathrm{ext}}H$, they may have $\{y \in \Lambda_n\setminus H : \exists x \in H, x \sim y \}$ in mind. 

We note that the event $\{\mathcal{G}_x=G\}$ is included in the event $\{ G\subseteq \mathcal{G}, \partial_{\mathrm{ext}} H\cap \mathcal G=\emptyset\}$.
An observation concerning the green set that compensates for the lack of spatial Markov property for the pair $(\sigma,\sigma_t)$ is that the event $\{\partial_{\mathrm{ext}} H\cap \mathcal G=\emptyset,\sigma\in A_\alpha,\sigma_t\in B_\beta\}$ is measurable with respect to the initial configuration $\sigma$ and all the Poisson clocks and uniforms outside $H$ (because in this event the spins at the exterior boundary of $H$ have not been updated). In particular, it is independent of the Poisson clocks in $H$. This independence property gives
\begin{align}
      \Pro\big(\sigma\in A_\alpha,\sigma_t \in B_\beta, \mathcal{G}_x=G\big) &\le \tau^{|G|}    \mathbb P\big(\partial_{\mathrm{ext}} G\cap \mathcal G=\emptyset,\sigma\in A_\alpha,\sigma_t\in B_\beta\big)\\
  &\le  \tau^{|G|}  \mathbb P\big(\sigma \in A_\alpha,\sigma_t\in B_\beta\big)\\
        &\le \tau^{|G|}   \max \big\{ \mathbb P(\sigma,\sigma_t\in A_\alpha) , \mathbb P(\sigma,\sigma_t\in B_\beta) \big\} \text{ by Lem.\ \ref{lem:CS}}.
\end{align}
The following claim implies the desired result by summing on every $\alpha,\beta$.
\begin{claim}
We have
\[
\Pro(\sigma,\sigma_t \in A_\alpha) \le  \bigg(\frac{4}{\cit}\bigg)^{|H|}\cdot \Pro(\sigma,\sigma_t \in A).
\]
\end{claim} 
\begin{proof}[Proof of the claim]
By induction, it is sufficient to prove the result when $H$ is a singleton, so let $y\in \Lambda_n$, let $s\in\{-1,+1\}$, let  $\eta^{y\leftarrow s}$ be  the spin configuration obtained from $\eta$ by assigning the value $s$ to the spin at $y$ and let $A_s=\{\eta \in \Omega_n : \eta^{y \leftarrow s} \in A\}$. Our goal is to prove that $\Pro(\sigma,\sigma_t \in A_s) \le (4/\cit)\cdot\Pro(\sigma,\sigma_t \in A)$. By Corollary \ref{cor:ins}, we have
\begin{align*}
\Pro(\sigma,\sigma_t \in A_s) = \sum_{\eta,\psi\in A_s} \Pro(\sigma=\eta,\sigma_t = \psi) & \le \frac{1}{\cit}\sum_{\eta,\psi\in A_s} \Pro(\sigma=\eta^{y \leftarrow s},\sigma_t = \psi^{y \leftarrow s})\\
& \le \frac{4}{\cit} \sum_{\eta,\psi\in A : \eta(y)=\psi(y)=s} \Pro(\sigma=\eta,\sigma_t = \psi)\\
& \le \frac{4}{\cit} \sum_{\eta,\psi\in A} \Pro(\sigma=\eta,\sigma_t = \psi)\\
& = \frac{4}{\cit} \Pro(\sigma,\sigma_t \in A),
\end{align*}
where the factor $4$ in the second inequality comes from the fact that $(\eta,\psi) \mapsto (\eta^{y \leftarrow s},\psi^{y \leftarrow s})$ is four-to-one.
\end{proof}
This ends the proof of the lemma.
\end{proof}
\subsection{The decoupling event}\label{sec:4.2}
In this section, we prove a decoupling result that will imply spatial mixing when we restrict ourself to $\beta\le \beta_c$ in the next section. To state this result, we need a definition. Recall the notation $\mathcal{G}_x$ from Section \ref{ssec:CS}.
\begin{definition}\label{defi:dec}
Given a non-empty set $S \subseteq \Lambda_n$, we define the decoupling event $\mathsf{dec}(S)$ as follows:
\[
\mathsf{dec}(S) = \bigcap_{x \in S} \big\{|\mathcal{G}_x| \le \log|S| \big\}.
\]
\end{definition}
The following lemma enables one to include the decoupling event $\mathsf{dec}(S)$ in the probability $\Pro(\sigma,\sigma_t \in A )$ without changing much this quantity.
\begin{lemma}\label{lem:mix_dec}
For every static event $A \subseteq\Omega_n$, every non-empty set $S \subseteq \Lambda_n$ and every $t\in[0,\tau]$,
\[
\Pro\big( \{\sigma,\sigma_t \in A \} \cap \mathsf{dec}(S)\big) \ge \frac{1}{2}\Pro(\sigma,\sigma_t \in A ).
\]
\end{lemma}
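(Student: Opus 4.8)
The goal is to show that intersecting $\{\sigma,\sigma_t\in A\}$ with $\mathsf{dec}(S)$ costs at most a factor $1/2$. Equivalently, I want to bound $\Pro(\{\sigma,\sigma_t\in A\}\cap\mathsf{dec}(S)^c)$ by $\tfrac12\Pro(\sigma,\sigma_t\in A)$. By a union bound over $x\in S$, it suffices to control, for each $x$, the probability $\Pro(\{\sigma,\sigma_t\in A\}\cap\{|\mathcal G_x|>\log|S|\})$, and then to sum these $|S|$ contributions. So the first step is to fix $x\in S$ and decompose the bad event according to the value of the green cluster: $\{|\mathcal G_x|>\log|S|\}=\bigsqcup_{G:\,|G|>\log|S|}\{\mathcal G_x=G\}$.

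**Using Lemma \ref{lem:ins_tol}.** For a fixed admissible $G$ with $x\notin$ necessarily in $G$, write $H=G\cup\{x\}$ and expand the static event $A$ over the spins on $H$: since $1_{\sigma\in A}=\sum_{\alpha\in\{-1,+1\}^H}1_{\sigma|_H=\alpha}1_{\sigma\in A_\alpha}$ and similarly for $\sigma_t$, we get
\[
\Pro\big(\{\sigma,\sigma_t\in A\}\cap\{\mathcal G_x=G\}\big)\le\sum_{\alpha,\beta\in\{-1,+1\}^H}\Pro\big(\sigma\in A_\alpha,\sigma_t\in B_\beta,\mathcal G_x=G\big)
\]
with $B=A$, which is exactly the left-hand side of Lemma \ref{lem:ins_tol}. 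That lemma bounds this by $(16/\cit)^{|G|+1}\tau^{|G|}\Pro(\sigma,\sigma_t\in A)$. Summing over all admissible $G$ of cardinality $k$ (of which there are at most $M^k$, by the choice of $M$ in Section \ref{ssec:defin-time-insert}) with $k>\log|S|$, and then summing over the $|S|$ values of $x$, gives
\[
\Pro\big(\{\sigma,\sigma_t\in A\}\cap\mathsf{dec}(S)^c\big)\le|S|\cdot\Pro(\sigma,\sigma_t\in A)\sum_{k>\log|S|}\Big(\frac{16}{\cit}\Big)^{k+1}M^k\tau^k.
\]
Now I invoke the summability estimate \eqref{eq:16} with $\lambda=\log|S|\ge 0$: if $\log|S|\ge 1$ the sum is at most $e^{-\log|S|}=1/|S|$, so the whole bound is $\le\Pro(\sigma,\sigma_t\in A)$, which is not quite the factor $1/2$ I want — so I need to be slightly more careful with constants.

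**Squeezing out the factor $1/2$.** To get the $1/2$ rather than just $1$, note \eqref{eq:16} actually gives $\sum_{k\ge\lambda}(16/\cit)^{k+1}M^k\tau^k\le e^{-\lambda}$ for all $\lambda\ge1$; one should check that for the relevant range this is at most $\tfrac{1}{2|S|}$ — and indeed \eqref{eq:18} (or simply that the $k=0$ term of the series \eqref{eq:16} already shows $16\tau/\cit$ is tiny, so the tail past $\lambda\ge1$ is far smaller than $e^{-\lambda}/2$) lets us absorb the extra factor $2$. The cleanest route: when $|S|\ge e$, we have $\lceil\log|S|\rceil\ge1$ and $\sum_{k\ge\log|S|}(16/\cit)^{k+1}M^k\tau^k\le\sum_{k\ge 1}(\cdots)\le e^{-1}\cdot(16\tau/\cit)<\tfrac{1}{2|S|}$ using $\tau=(\cit)^2/(10^6M)$; when $1\le|S|<e$ (so $|S|\in\{1,2\}$ and $\log|S|<1$, forcing $k\ge1$ in the bad event anyway since $|\mathcal G_x|$ is an integer and $|\mathcal G_x|>\log|S|$ with $\mathcal G_x\ne\emptyset$ means $|\mathcal G_x|\ge1$), a direct application of \eqref{eq:16} with $\lambda=1$ together with the factor $|S|\le 2$ gives a bound $\le 2e^{-1}\cdot(16\tau/\cit)\cdot\Pro(\sigma,\sigma_t\in A)\le\tfrac12\Pro(\sigma,\sigma_t\in A)$. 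Either way $\Pro(\{\sigma,\sigma_t\in A\}\cap\mathsf{dec}(S)^c)\le\tfrac12\Pro(\sigma,\sigma_t\in A)$, which rearranges to the claim.

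**Main obstacle.** The conceptual content is entirely in Lemma \ref{lem:ins_tol}, which is already proven; the only real work here is bookkeeping — correctly passing from "$|\mathcal G_x|$ large" to a sum over green clusters $G$, remembering that the counting bound $M^k$ applies, and verifying that the tail sum beats $1/(2|S|)$ using the specific smallness of $\tau$ baked into \eqref{eq:16}. The one place to be careful is the edge case of very small $|S|$ where $\log|S|<1$, so that the naive "$k>\log|S|$" does not immediately give $k\ge1$ from the inequality alone but does from integrality together with $\mathcal G_x\neq\emptyset$; handling this case explicitly (or simply noting $\mathsf{dec}(S)$ for $|S|\le 2$ still forces $|\mathcal G_x|\le\log|S|<1$, i.e.\ $\mathcal G_x=\emptyset$) closes the argument.
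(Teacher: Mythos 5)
Your approach is essentially the paper's, with one structural difference: the paper first isolates a separate Lemma~\ref{lem:3}, stating that $\Pro(\sigma,\sigma_t\in A,\,|\mathcal G_x|\ge\lambda)\le e^{-\lambda}\Pro(\sigma,\sigma_t\in A)$ for all $\lambda\ge1$, and then deduces Lemma~\ref{lem:mix_dec} by a union bound over $x\in S$ applied with $\lambda=\log|S|+1$, giving $|S|\,e^{-(\log|S|+1)}=1/e<1/2$. Your decomposition over $\{\mathcal G_x=G\}$ and application of Lemma~\ref{lem:ins_tol} with $B=A$ is precisely the proof of Lemma~\ref{lem:3} inlined, so the conceptual content is the same.

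However, the ``squeezing out the factor $1/2$'' step contains a genuine gap. The chain
\[
\sum_{k\ge\log|S|}\Big(\frac{16}{\cit}\Big)^{k+1}M^k\tau^k \;\le\; \sum_{k\ge1}\Big(\frac{16}{\cit}\Big)^{k+1}M^k\tau^k \;\le\; e^{-1}\cdot\frac{16\tau}{\cit} \;<\;\frac{1}{2|S|}
\]
cannot hold uniformly in $S$: the first inequality discards all dependence on $|S|$, so the two middle quantities are constants (depending only on $\beta$ through $\cit,\tau$), while $\tfrac{1}{2|S|}\to0$ as $|S|$ grows; the final strict inequality therefore fails for large $|S|$. (Independently, the bound $\sum_{k\ge1}(\cdots)\le e^{-1}\cdot 16\tau/\cit$ is not what \eqref{eq:16} provides --- it gives $\le e^{-1}$, and the additional factor $16\tau/\cit$ is not justified by anything in the paper.) The required decay in $|S|$ has to be kept in the exponent: you must apply \eqref{eq:16} with $\lambda$ growing like $\log|S|$, e.g.\ $\lambda=\log|S|+1$, so that the tail is $\le e^{-(\log|S|+1)}=e^{-1}/|S|$, which cancels the union-bound factor $|S|$ and lands at $1/e<1/2$. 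Replacing your ``cleanest route'' paragraph by this single application of \eqref{eq:16} (equivalently, by invoking Lemma~\ref{lem:3}) closes the gap; the preceding decomposition into green clusters, the counting by $M^k$, and your handling of the integrality of $|\mathcal G_x|$ when $|S|$ is small are all sound.
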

\begin{proof}
This is a direct consequence of a union bound on the points of $S$ and of the following lemma (applied to $\lambda = \log|S|+1$), that implies that
\[
\Pro\big( \{\sigma,\sigma_t \in A \} \cap \mathsf{dec}(S)^c\big) \le |S|e^{-(\log|S|+1)}\Pro(\sigma,\sigma_t \in A ) \le \frac{1}{e}\Pro(\sigma,\sigma_t \in A ). \qedhere
\]
\end{proof}
\begin{lemma}
  \label{lem:3}
For every $A\subseteq \Omega_n$, $x\in \Lambda_n$, $\lambda\ge 1$ and $t \in [0,\tau]$,
  \begin{equation}
    \label{eq:17}
    \mathbb P\big(\sigma,\sigma_t\in A,|\mathcal G_x|\ge \lambda\big)\le e^{-\lambda}\mathbb P\big(\sigma,\sigma_t\in A\big).
  \end{equation}
\end{lemma}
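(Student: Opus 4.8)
The plan is to decompose the probability according to the value $G$ of the green cluster $\mathcal{G}_x$ and to extract, from the sum over $G$ with $|G|\ge\lambda$, a geometric series with ratio governed by $\tau$, which is small by construction. The starting point is the identity
\[
\mathbb P\big(\sigma,\sigma_t\in A,|\mathcal G_x|\ge \lambda\big)=\sum_{\substack{G\subseteq\Lambda_n\\ |G|\ge\lambda}}\mathbb P\big(\sigma,\sigma_t\in A,\mathcal G_x=G\big).
\]
For each fixed $G$, I want to bound $\mathbb P(\sigma,\sigma_t\in A,\mathcal G_x=G)$ using Lemma~\ref{lem:ins_tol}. The point is that on the event $\{\sigma,\sigma_t\in A\}$ one has trivially $\sigma\in A_\alpha$ and $\sigma_t\in A_\beta$ for $\alpha=\sigma|_H$ and $\beta=\sigma_t|_H$, where $H=G\cup\{x\}$; hence $\{\sigma,\sigma_t\in A,\mathcal G_x=G\}$ is contained in the union over $\alpha,\beta\in\{-1,+1\}^H$ of the events $\{\sigma\in A_\alpha,\sigma_t\in A_\beta,\mathcal G_x=G\}$, so
\[
\mathbb P\big(\sigma,\sigma_t\in A,\mathcal G_x=G\big)\le \sum_{\alpha,\beta\in\{-1,+1\}^H}\mathbb P\big(\sigma\in A_\alpha,\sigma_t\in A_\beta,\mathcal G_x=G\big)\le \bigg(\frac{16}{\cit}\bigg)^{|G|+1}\tau^{|G|}\,\mathbb P\big(\sigma,\sigma_t\in A\big),
\]
applying Lemma~\ref{lem:ins_tol} with $B=A$ (so that the maximum on the right-hand side is just $\mathbb P(\sigma,\sigma_t\in A)$).

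It then remains to sum this bound over admissible $G$. Here the key combinatorial input, already recorded in Section~\ref{ssec:defin-time-insert}, is that the green cluster $\mathcal G_x$ is a subgraph cluster of the form "cluster of $x$ and of its neighbours" in the green site percolation, so the number of possible values of $G$ with $|G|=k$ is at most $M^k$. Summing over $k\ge\lambda$ and $G$ with $|G|=k$ gives
\[
\mathbb P\big(\sigma,\sigma_t\in A,|\mathcal G_x|\ge \lambda\big)\le \mathbb P\big(\sigma,\sigma_t\in A\big)\sum_{k\ge\lambda}M^k\bigg(\frac{16}{\cit}\bigg)^{k+1}\tau^k,
\]
and the sum is at most $e^{-\lambda}$ by estimate~\eqref{eq:16}. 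This is exactly the desired inequality~\eqref{eq:17}.

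The main subtlety, rather than obstacle, is the bookkeeping in the first step: one must be careful that the event $\{\mathcal G_x=G\}$ really does force the green cluster structure (so that the counting bound $M^k$ applies), and that when $x\in G$ one still has $H=G$ and nothing changes. One should also note that the factor $4^{|H|}$ hidden inside Lemma~\ref{lem:ins_tol} (coming from the Claim in its proof and the sum over the $2^{2|H|}$ pairs $(\alpha,\beta)$) is already absorbed into the constant $16/\cit$ appearing there, so no extra loss is incurred; the only place the smallness of $\tau$ is used is in making the geometric series~\eqref{eq:16} summable to $e^{-\lambda}$. Everything else is a routine union bound.
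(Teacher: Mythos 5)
Your proof is correct and follows essentially the same route as the paper's: decompose over the values $G$ of $\mathcal G_x$, reduce $\{\sigma,\sigma_t\in A,\mathcal G_x=G\}$ to the union over $\alpha,\beta\in\{-1,+1\}^H$ of $\{\sigma\in A_\alpha,\sigma_t\in A_\beta,\mathcal G_x=G\}$, apply Lemma~\ref{lem:ins_tol} with $B=A$, count admissible $G$ with $|G|=k$ by $M^k$, and sum using \eqref{eq:16}. Nothing meaningfully different.
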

\begin{proof}
Let $A \subseteq \Omega_n$,  $x \in \Lambda_n$, $\lambda\ge 1$ and $t \in [0,\tau]$. By decomposing over all possible values for $\mathcal G_x$, we have
  \begin{equation}
    \label{eq:2}
 \Pro\big(\sigma,\sigma_t\in A,\, |\mathcal G_x| \ge \lambda\big) \le   \sum_{\substack{|G| \ge\lambda}  }\underbrace{\Pro\big(\sigma,\sigma_t\in A, \mathcal G_x=G\big)}_{=:p_{G}} . 
\end{equation}
We  now fix an admissible set $G$ (i.e.\ a set that is a possible outcome for $\mathcal{G}_x$), we let $H=G\cup\{x\}$ and we upper bound the summand $p_{G}$  by summing over the possible configurations in $H$ at time $0$ and at time $t$. We obtain that
\begin{equation}
  \label{eq:9}
  p_{G} \le \sum_{\alpha,\beta \in \{-1,+1\}^H}  \mathbb P( \sigma\in A_\alpha,\sigma_t\in A_\beta, \mathcal{G}_x=G) \le \bigg(\frac{16}{\cit}\bigg)^{|G|+1}\tau^{|G|}\Pro\big( \sigma,\sigma_t \in A \big),
\end{equation}
where the last inequality follows from Lemma \ref{lem:ins_tol} (with $A=B$).
         
Let us now return to \eqref{eq:2}. By first using our bound on $p_G$ and then the definition of $M$ and $\tau$ (see Section~\ref{ssec:defin-time-insert}, Equation~\eqref{eq:16}), we get
\[
\Pro(\sigma,\sigma_t\in A,\, |\mathcal G_x| \ge \lambda) \le \mathbb P(\sigma,\sigma_t\in A) \sum_{k\ge \lambda} \bigg(\frac{16}{\cit}\bigg)^{k+1}M^k\tau^k\le e^{-\lambda}\mathbb P(\sigma,\sigma_t\in A). \qedhere
\]
\end{proof}
\subsection{Differentiation}\label{sec:diff}
In this section, we prove that the interpretation of the differential formulas in terms of pivotal events -- that is true in the case of Bernoulli percolation under i.i.d.\ dynamics (see e.g.\ \cite[Prop.\ 4.2]{TV23}) -- still holds in the case of the Ising model (up to a multiplicative constant and at times $t \le \tau$).
\begin{definition}
Let $W\subseteq\Lambda_n$, $A \subseteq \Omega_n$ and $\eta\in\Omega_n$. We say that $W$ is pivotal for $A \subseteq \Omega_n$ in $\eta$ if there exists $\psi\in\Omega_n$ such that $\eta$ and $\psi$ coincide outside $W$ and $1_A(\psi)\neq 1_A(\eta)$. If $\{x\}$ is pivotal for $A$ in $\eta$, we simply say that $x$ is pivotal for $A$ in $\eta$. We let $\mathsf{Piv}_x(A)\subseteq \Omega_n$ denote the event that $x$ is pivotal for $A$.
\end{definition}
\begin{proposition}\label{prop:diff}
There exists $C>0$ that does not depend on $n$ such that the following holds for every $A\subseteq \Omega_n$ and $t\in[0,\tau]$:
\[
0\le -\frac{d}{dt}\Pro(\sigma,\sigma_t \in A ) \le C \sum_{x \in \Lambda_n} \Pro \big( \sigma,\sigma_t \in \mathsf{Piv}_x(A) \big).
\]
If additionally the event $A$ is increasing, then
\[
\frac{1}{C} \sum_{x \in \Lambda_n} \Pro \big( \sigma,\sigma_t \in \mathsf{Piv}_x(A) \big) \le -\frac{d}{dt}\Pro(\sigma,\sigma_t \in A ) \le C\sum_{x \in \Lambda_n} \Pro \big( \sigma,\sigma_t \in \mathsf{Piv}_x(A) \big).
\]
\end{proposition}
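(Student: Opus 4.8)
The plan is to start from the general differential formula stated in Section~\ref{ssec:diff}, which gives
\[
-\frac{d}{dt}\Pro(\sigma,\sigma_t \in A) = \frac{1}{2}\sum_{x \in \Lambda_n} \E\Big[ c_x(\sigma)\big(1_A(\sigma^x)-1_A(\sigma)\big)\big(1_A(\sigma^{(x)}_t)-1_A(\sigma_t)\big)\Big],
\]
applied with $f=g=1_A$. The first observation is that each summand is supported on the event that $x$ is pivotal for $A$ both in $\sigma$ and in $\sigma_t$: if $x$ is not pivotal in $\sigma$ then $1_A(\sigma^x)-1_A(\sigma)=0$, and similarly for the second factor (note $\sigma^{(x)}_t$ and $\sigma_t$ differ only at sites that have been influenced by the flip at $x$, but on the event $1_A(\sigma^{(x)}_t)\ne 1_A(\sigma_t)$ the site $x$ is pivotal for $A$ in $\sigma_t$ — this uses locality/monotone coupling, or one argues directly that the product is nonzero only when $x$ is pivotal at both times). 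Since $c_x \le 1-a \le 1$ and each factor is bounded by $1$ in absolute value, the upper bound
\[
\Big|-\frac{d}{dt}\Pro(\sigma,\sigma_t\in A)\Big| \le \frac{1}{2}\sum_{x} \Pro\big(\sigma,\sigma_t\in\mathsf{Piv}_x(A)\big)
\]
follows immediately, which already gives the right-hand inequality with $C=1/2$ (or any $C\ge 1/2$).

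Next I would establish nonnegativity of $-\frac{d}{dt}\Pro(\sigma,\sigma_t\in A)$. This is a general fact: $t\mapsto \E[f(\sigma)f(\sigma_t)]$ is non-increasing for any $f$ by reversibility (stated in Section~\ref{ssec:generator} under ``Further properties''), applied to $f=1_A$. So no work is needed there.

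The substantive part is the lower bound when $A$ is increasing. Here the factor $c_x(\sigma)\in[a,1-a]$ is bounded below by $a>0$, so it suffices to lower bound
\[
\sum_x \E\Big[\big(1_A(\sigma^x)-1_A(\sigma)\big)\big(1_A(\sigma^{(x)}_t)-1_A(\sigma_t)\big)\Big]
\]
by a constant times $\sum_x \Pro(\sigma,\sigma_t\in\mathsf{Piv}_x(A))$. For an increasing event $A$, when $x$ is pivotal for $A$ in $\sigma$, the sign of $1_A(\sigma^x)-1_A(\sigma)$ equals $\sigma^x(x)-\sigma(x)$ up to the factor $1/2$, i.e.\ it is $+1$ exactly when $\sigma(x)=-1$ and $A$ holds after flipping to $+1$. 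The difficulty is that the second factor $1_A(\sigma^{(x)}_t)-1_A(\sigma_t)$ depends on the dynamical process $\sigma^{(x)}$, which can disagree with $\sigma$ at many sites, not just at $x$; so the two factors need not have a controlled joint sign a priori. The key idea — and I expect this to be the main obstacle — is to restrict to the decoupling-type event where the ``green cluster'' $\mathcal{G}_x$ of $x$ is small (or even just: the clock at $x$ has not rung before time $t$, so $\sigma^{(x)}_t = (\sigma_t)^x$, i.e.\ the two dynamics stay coupled except for the initial flip at $x$). On the event that the clock at $x$ has not rung in $[0,t]$ — which has probability $e^{-t}\ge e^{-\tau}\asymp 1$ and is independent of everything else in a suitable conditioning — we have $\sigma^{(x)}_t=(\sigma_t)^x$, so the second factor becomes $1_A((\sigma_t)^x)-1_A(\sigma_t)$, which for increasing $A$ has the same sign as $1_A(\sigma^x)-1_A(\sigma)$ whenever both are nonzero (both signs are governed by whether we are flipping up or down, and the relevant ``up'' case is $\sigma(x)=\sigma_t(x)=-1$). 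Thus on this event the product is $\ge 1_{\{\sigma,\sigma_t\in \mathsf{Piv}_x(A),\ \sigma(x)=\sigma_t(x)=-1\}}$ (up to relabeling; using increasingness one can also pick up the $\sigma(x)=\sigma_t(x)=+1$ contribution by symmetry). To convert $\Pro(\sigma,\sigma_t\in\mathsf{Piv}_x(A),\ \sigma(x)=\sigma_t(x)=-1,\ \text{clock at }x\text{ silent})$ into a constant times $\Pro(\sigma,\sigma_t\in\mathsf{Piv}_x(A))$, I would invoke the dynamical finite-energy property: Corollary~\ref{cor:ins} lets us force $\sigma(x)=\sigma_t(x)=-1$ at multiplicative cost $\cit$ (pivotality of $A$ at $x$ is unaffected by the value at $x$ since it is an event not depending on the spin at $x$), and the silence of the clock at $x$ costs only an independent factor $e^{-t}\ge e^{-\tau}$; combining, $\sum_x$ of the restricted quantity is $\gtrsim \sum_x \Pro(\sigma,\sigma_t\in\mathsf{Piv}_x(A))$ with a constant depending only on $\beta$. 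Finally, reinstating the factor $c_x(\sigma)\ge a$ and the global $1/2$ yields the claimed lower bound $\frac{1}{C}\sum_x \Pro(\sigma,\sigma_t\in\mathsf{Piv}_x(A)) \le -\frac{d}{dt}\Pro(\sigma,\sigma_t\in A)$ for a suitable $C=C(\beta)$, completing the proof.
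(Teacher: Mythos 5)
The upper bound is where your argument breaks down, and the error is substantive rather than cosmetic. You claim that the summand in the differential formula is supported on $\{\sigma,\sigma_t\in\mathsf{Piv}_x(A)\}$, on the grounds that $1_A(\sigma^{(x)}_t)\ne 1_A(\sigma_t)$ forces $x$ to be pivotal for $A$ in $\sigma_t$. This is false. Unlike Bernoulli i.i.d.\ dynamics, the Glauber dynamics propagates the initial disagreement at $x$: the processes $(\sigma_s)$ and $(\sigma^{(x)}_s)$ use the same clocks and uniforms, but the update rule at a neighbour $y$ of $x$ depends on the spin at $x$, so the set of disagreement sites at time $t$ can be the whole green cluster $\mathcal{G}_x\cup\{x\}$, not just $\{x\}$. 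Having $1_A(\sigma^{(x)}_t)\ne 1_A(\sigma_t)$ only tells you that the \emph{set} $\mathcal{G}_x\cup\{x\}$ is pivotal in $\sigma_t$; it does not tell you that $x$ itself is. (Take $A$ depending only on $\Lambda_{n-1}$ and $x\in\partial\Lambda_n$: then $x$ is never pivotal, yet the summand can be nonzero.) This is exactly why the paper's upper bound is not the one-line estimate you give with $C=1/2$: the paper decomposes over the value $G$ of $\mathcal{G}_x$, extracts a pivotal site $y\in G\cup\{x\}$ by interpolation, uses Lemma~\ref{lem:ins_tol} (Cauchy--Schwarz trick plus finite energy) to control each term, and then sums using the combinatorial bound $M^k$ on the number of admissible $G$ and the smallness of $\tau$ from~\eqref{eq:18}. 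None of that can be skipped.

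For the lower bound the overall plan — restrict to the event where the disagreement stays at $x$, so that the second factor becomes $1_A((\sigma_t)^x)-1_A(\sigma_t)$, then remove the restriction via finite energy/decoupling — is the right one and matches the paper. But you have two issues. First, the event ``the clock at $x$ has not rung'' does \emph{not} imply $\sigma^{(x)}_t=(\sigma_t)^x$; you need the clocks at $x$ \emph{and all its neighbours} to be silent (equivalently $\mathcal{G}_x=\emptyset$), since otherwise the disagreement can already spread to a neighbour at its first ring. Second, ``the silence of the clock at $x$ costs only an independent factor $e^{-t}$'' is not correct as stated: the event $\{\sigma,\sigma_t\in\mathsf{Piv}_x(A)\}$ depends on those clocks through $\sigma_t$, so they are not independent. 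The paper handles this via Lemma~\ref{lem:mix_dec} applied with $S=\{x\}$ (so $\mathsf{dec}(\{x\})=\{\mathcal{G}_x=\emptyset\}$), which gives $\Pro(\sigma,\sigma_t\in\mathsf{Piv}_x(A),\mathcal{G}_x=\emptyset)\ge\tfrac12\Pro(\sigma,\sigma_t\in\mathsf{Piv}_x(A))$; that inequality in turn rests on Lemma~\ref{lem:3} and Lemma~\ref{lem:ins_tol}. Your invocation of Corollary~\ref{cor:ins} to fix $\sigma(x)=\sigma_t(x)$ is in the right spirit (and is indeed one ingredient of Lemma~\ref{lem:ins_tol}) but does not by itself supply the missing decoupling of the clock-silence event from the pivotality event.
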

\begin{proof}
Let us first consider the case where $A$ is not necessarily increasing. The fact that $t \mapsto \Pro(\sigma,\sigma_t \in A)$ is non-increasing is a general property of the dynamics (see the end of Section~\ref{sec:infin-gener}). Let us prove the upper bound. We start the proof by using the general differential formula stated in Section~\ref{ssec:diff}. Recall that $(\sigma^{(x)}_s)_{s \ge 0}$ is the same process as $(\sigma_s)_{s \ge 0}$ (and defined with the same underlying Poisson clocks $Y_y$ and uniforms $U_{y,i}$), except that we have changed the value of the spin at $x$ at time $0$ (i.e.\ the initial configuration is $\sigma^x$). Recall also the definition of $\mathcal{G}_x$ in Section~\ref{ssec:CS}. Let $f=1_A$. By the formula from Section~\ref{ssec:diff} (applied to $g=f$), we have
\[
0 \le -\frac{d}{dt}\Pro(\sigma,\sigma_t \in A) = \frac{1}{2}\sum_{x \in \Lambda_n} \E \Big[ c_x(\sigma) \big( f(\sigma^x)-f(\sigma) \big) \big( f(\sigma^{(x)}_t)-f(\sigma_t) \big) \Big].
\]
Since $c_x\le 1$ and since $\sigma^{(x)}_t$ and $\sigma_t$ coincide outside $\mathcal{G}_x\cup\{x\}$, we obtain that
\begin{align*}
& -\frac{d}{dt}\Pro(\sigma,\sigma_t \in A)\\
& \le \frac{1}{2} \sum_{x \in \Lambda_n} \sum_{G } \Pro ( \text{$x$ is piv.\ for $A$ in $\sigma$, $G\cup\{x\}$ is piv.\ for $A$ in $\sigma_t$, $\mathcal{G}_x=G$} ).
\end{align*}
Recall that if $\alpha\in\{-1,+1\}^{G\cup\{x\}}$ and $\eta\in\Omega_n$, we let $\alpha|\eta$ denote the spin configuration that coincides with $\alpha$ in $G\cup\{x\}$ and with $\eta$ in $\Lambda_n\setminus (G\cup\{x\})$. We observe that if the subset $G\cup\{x\}$ is pivotal for $A$ in $\sigma_t$, then there exists $y\in G\cup\{x\}$ and $\alpha\in\{-1,+1\}^{G\cup\{x\}}$ such that $\alpha|\sigma_t \in \mathsf{Piv}_y(A)$. (Indeed, there exist $\beta,\gamma\in\{-1,+1\}^{G\cup\{x\}}$ such that $1_A(\beta|\sigma_t)\neq 1_A(\gamma|\sigma_t)$, and one can find $y$ and $\alpha$ by interpolating between $\beta|\sigma_t$ and $\gamma|\sigma_t$: change the spins in $G\cup\{x\}$ one by one and stop when the value of $1_A$ changes.) As a result,
\begin{align*}
& -\frac{d}{dt}\Pro(\sigma,\sigma_t \in A)\\
& \le \frac{1}{2} \sum_{x \in \Lambda_n} \sum_G  \sum_{y \in G\cup\{x\}} \underbrace{\sum_{\alpha,\beta \in \{-1,+1\}^{G\cup\{x\}}} \Pro \big( \sigma \in \{\mathsf{Piv}_x(A)\}_\alpha, \, \sigma_t \in \{\mathsf{Piv}_y(A)\}_\beta, \, \mathcal{G}_x=G \big)}_{=:p_{x,y,G}},
\end{align*}
where (as in Section \ref{ssec:CS}) for a static event $B$ we use the notation $B_\alpha=\{\eta\in \Omega_n : \alpha|\eta \in B\}$ (or similarly $\{B\}_\alpha$).\footnote{Remark: We consider $\{\mathsf{Piv}_x(A)\}_\alpha$ instead of just $\mathsf{Piv}_x(A)$ in order to apply Lemma \ref{lem:ins_tol} just below.}

By Lemma~\ref{lem:ins_tol},
\[
p_{x,y,G} \le \bigg(\frac{16}{\cit}\bigg)^{|G|+1}\cdot\tau^{|G|}\cdot\max\big\{\Pro\big(\sigma, \sigma_t \in \mathsf{Piv}_x(A)\big),\Pro\big(\sigma, \sigma_t \in \mathsf{Piv}_y(A)\big)\big\}.
\]
Recall the definition of $M$ at the beginning of Section \ref{sec:mix_gen} (N.B: the number of admissible values of $G$ such that $|G|\le k$ is less than $M^k$). The above inequality implies that
\begin{align*}
& -\frac{d}{dt}\Pro(\sigma,\sigma_t \in A)\\
& \le \frac{1}{2} \sum_{x \in \Lambda_n} \sum_{k\ge 0} M^k(k+1)\bigg(\frac{16}{\cit}\bigg)^{k+1}\tau^{k} \max_{\text{$y$ at graph}\atop \text{dist.\ $\le k$ from $x$}} \Pro\big(\sigma \in \mathsf{Piv}_y(A), \, \sigma_t \in \mathsf{Piv}_y(A)\big)\\
& \le \frac{1}{2}\sum_{y \in\Lambda_n} \Pro\big(\sigma \in \mathsf{Piv}_y(A), \, \sigma_t \in \mathsf{Piv}_y(A)\big) \sum_{k \ge 0} (k+1)\bigg(\frac{16}{\cit}\bigg)^{k+1}M^k\tau^k (2k+1)^2\\
& \lesssim \sum_{y \in \Lambda_n} \Pro\big(\sigma \in \mathsf{Piv}_y(A), \, \sigma_t \in \mathsf{Piv}_y(A)\big) \quad \text{by definition of $\tau$, see Equation~\eqref{eq:18}}.
\end{align*}
This concludes the general case. Let now concentrate on the case where $A$ is increasing and prove the lower bound. In this case (by still writing $f=1_A$), $f(\sigma^x)-f(\sigma)$ and $f(\sigma^{(x)}_t)-f(\sigma_t)$ always have the same sign, so
\begin{align*}
-\frac{d}{dt}\Pro(\sigma,\sigma_t \in A) & \ge \frac{1}{2}\sum_{x \in \Lambda_n} \E \Big[ c_x(\sigma) \big( f(\sigma^x)-f(\sigma) \big) \big( f(\sigma^{(x)}_t)-f(\sigma_t) \big) 1_{\mathcal{G}_x=\emptyset}\Big]\\
& = \frac{1}{2}\sum_{x \in \Lambda_n} \E \Big[ c_x(\sigma) 1_{\mathsf{Piv}_x(A)}(\sigma) 1_{\mathsf{Piv}_x(A)}(\sigma_t) 1_{\mathcal{G}_x=\emptyset}\Big]\\
& \gtrsim \sum_{x \in \Lambda_n} \Pro \big( \sigma,\sigma_t \in \mathsf{Piv}_x(A), \mathcal{G}_x=\emptyset\big) \quad \text{since $c_x\gtrsim 1$}\\
& \ge \frac{1}{2} \sum_{x \in \Lambda_n} \Pro \big( \sigma,\sigma_t \in \mathsf{Piv}_x(A)\big) \quad \text{by Lemma~\ref{lem:mix_dec} with $S=\{x\}$}. \qedhere
\end{align*}
\end{proof}
\section{\texorpdfstring{The events $\{\sigma,\sigma_t \in A\}$ for $\beta \le \beta_c$}{Dynamical events at  critical and supercritical temperature}}\label{sec:mix_sub}

Fix $\beta \le \beta_c$. Let $\cit$ and $\tau$ be the constants as defined in Section~\ref{ssec:defin-time-insert}. Recall that in this paper a static event is just a subset of $\Omega_n$.

\subsection{Spatial mixing}\label{ssec:mix}

In all this section, we fix some $n \ge 1$ and we let $(\sigma_s)_{s\ge 0}$ be the Glauber dynamics in $\Lambda_n$ (from Definition~\ref{defi:glauber}) that starts from $\sigma \sim \mu_n=\mu_{n,\beta}$.
In this section, we prove a spatial mixing result for events of the form $\{\sigma,\sigma_t\in A\}$.
As in Section \ref{ssec:1.2}, we write $\Lambda^{(\delta)}$ for the $(1+\delta)$-blown-up version of a rhombus $\Lambda$.
\begin{proposition}\label{prop:mix}
For every $\delta>0$, there exist two constants $c_\delta,C_\delta>0$ that do not depend on $n$ such that the following holds. Let $\Lambda$ be a rhombus such that $\Lambda^{(\delta)}\subseteq \Lambda_n$ and let $A$ and $B$ be two static events. If $A$ depends only on $\Lambda$ and $B$ depends only on $\Lambda_n\setminus\Lambda^{(\delta)}$, then for every $t \in[0,\tau]$,
\[
c_\delta\Pro(\sigma,\sigma_t\in A \cap B) \le \Pro(\sigma,\sigma_t\in A)\Pro(\sigma,\sigma_t \in B) \le C_\delta\Pro(\sigma,\sigma_t\in A \cap B).
\]
\end{proposition}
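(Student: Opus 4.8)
The plan is to deduce the dynamical spatial mixing from the static spatial mixing property \eqref{eq:stat_mix_used}, using the decoupling event $\mathsf{dec}(S)$ to make the pair $(\sigma,\sigma_t)$ behave as if it satisfied a spatial Markov property. First I would fix $\delta>0$ and work with a slightly thinner shell: replace $\delta$ by $\delta/2$ in the role of the ``safety margin'' so that there is still a macroscopic annulus $\Lambda^{(\delta)}\setminus\Lambda^{(\delta/2)}$ of width $\asymp \delta n$ between the region on which $A$ depends and the region on which $B$ depends. Let $S$ be the set of sites in this annulus (or, more conveniently, a single ``thick circuit'' of sites separating $\Lambda^{(\delta/2)}$ from $\Lambda_n\setminus\Lambda^{(\delta)}$); note $|S|\asymp_\delta n^2$ or at least $|S|\ge n$, so $\log|S|\lesssim \log n$, which is much smaller than the $\asymp\delta n$ width of the annulus when $n$ is large. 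On the event $\mathsf{dec}(S)$, every $x\in S$ has $|\mathcal G_x|\le \log|S|$, so in particular no green cluster touching $S$ can cross the whole annulus; consequently the set of sites whose clock has rung before time $\tau$, restricted to a neighbourhood of $S$, does not connect the inside to the outside. This is the geometric input that will let us factorize.

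The key steps, in order: (1) Apply Lemma~\ref{lem:mix_dec} to insert the decoupling event: $\Pro(\sigma,\sigma_t\in A\cap B)\ge \tfrac12\Pro(\{\sigma,\sigma_t\in A\cap B\}\cap \mathsf{dec}(S))$, and similarly for $A$ alone and $B$ alone (the other direction, $\Pro(\cdots\cap\mathsf{dec}(S))\le \Pro(\cdots)$, is trivial). This reduces everything to comparing the probabilities of $\{\sigma,\sigma_t\in A\}\cap\mathsf{dec}(S)$, $\{\sigma,\sigma_t\in B\}\cap\mathsf{dec}(S)$ and their intersection. (2) On $\mathsf{dec}(S)$, condition on the initial configuration $\sigma$ together with the Poisson clocks and uniforms attached to sites in (a neighbourhood of) $S$. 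Because green clusters meeting $S$ are too small to cross the annulus, once we know $\sigma$ everywhere and the clock/uniform data near $S$, the dynamical evolution inside $\Lambda^{(\delta/2)}$ (hence $(\sigma,\sigma_t)$ restricted to $\Lambda$) is a deterministic function of the clocks/uniforms strictly inside, which are independent of those strictly outside that govern $(\sigma,\sigma_t)$ on $\Lambda_n\setminus\Lambda^{(\delta)}$ — \emph{conditionally on $\sigma$}. So the only remaining correlation between the $A$-event and the $B$-event comes through the static field $\sigma$. (3) Invoke the static spatial mixing \eqref{eq:stat_mix_used} (or \eqref{eq:stat_mix_initial}) to decorrelate the two halves of $\sigma$: the distribution of $\sigma$ restricted to the inside and to the outside of the annulus is, up to constants $c_\delta,C_\delta$, a product measure. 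Combining this with the conditional independence of the dynamics gives the upper and lower bounds $c_\delta\Pro(\sigma,\sigma_t\in A\cap B)\le \Pro(\sigma,\sigma_t\in A)\Pro(\sigma,\sigma_t\in B)\le C_\delta\Pro(\sigma,\sigma_t\in A\cap B)$, after reinserting the factors $\tfrac12$ from Step~(1) into the constants.

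Concretely, for the clean factorization I would condition on the sigma-field $\mathcal E$ generated by $\sigma$ (the whole initial configuration) together with all clocks $Y_z$ and uniforms $U_{z,i}$ for $z$ in $S$ and in its exterior region $\Lambda_n\setminus\Lambda^{(\delta/2)}$; on $\mathsf{dec}(S)$ — which is $\mathcal E$-measurable up to the clock data near $S$, so one should be slightly careful and rather let $\mathcal E$ contain enough clock information near $S$ to determine $\mathsf{dec}(S)$ — the event $\{\sigma,\sigma_t\in A\}$ becomes measurable with respect to $\mathcal E$ and the clocks/uniforms strictly inside $\Lambda^{(\delta/2)}$, while $\{\sigma,\sigma_t\in B\}$ is $\mathcal E$-measurable. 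This is the same ``weak spatial Markov property via small clusters'' mechanism used in the proofs of Proposition~\ref{prop:the_one_we_prove} and Lemma~\ref{lem:ins_tol}. Taking conditional expectations and then averaging, the $A$-indicator and $B$-indicator decouple given $\mathcal E$, and the residual dependence is that of the static law of $\sigma$ across the annulus, handled by \eqref{eq:stat_mix_used}.

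The main obstacle I anticipate is making precise the claim in Step~(2) that, on $\mathsf{dec}(S)$, the pair $(\sigma,\sigma_t)$ restricted to $\Lambda$ depends only on $\sigma$ and the clock/uniform data ``inside'', independently of the ``outside'' data. The subtlety is that $\mathsf{dec}(S)$ constrains the \emph{green clusters of points of $S$}, i.e.\ connected sets of sites whose clocks have rung before $\tau$ passing near $S$, but to run the Glauber update inside $\Lambda^{(\delta/2)}$ one needs that \emph{no} influence propagates from outside, which requires that the green clusters do not form a crossing of the annulus — this follows from $|\mathcal G_x|\le\log|S|\ll \delta n$ for all $x\in S$ only if $S$ is chosen to be a genuine separating ``thick annulus'' of width $\asymp\log|S|+2$, not just a circuit, so that any green cluster reaching across would have to contain some $x\in S$ with $|\mathcal G_x|$ exceeding $\log|S|$. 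I would therefore take $S$ to be the full set of sites in $\Lambda^{(3\delta/4)}\setminus\Lambda^{(\delta/4)}$, whose thickness is $\asymp\delta n\gg\log|S|$ for large $n$, and then for the finitely many small $n$ the proposition holds trivially by adjusting constants. Getting this separation bookkeeping exactly right (and checking that the relevant events are measurable with respect to the intended sigma-fields) is the one genuinely delicate point; the rest is assembling Lemma~\ref{lem:mix_dec}, the conditional independence, and \eqref{eq:stat_mix_used}.
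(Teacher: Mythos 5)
Your overall plan---insert a decoupling event via Lemma~\ref{lem:mix_dec}, exploit that on the decoupling event no information propagates dynamically across the annulus, and finish with the static spatial mixing---is the same as the paper's. The crucial structural difference is that the paper uses \emph{two} separate decoupling events, $\mathsf{dec}(\partial\Lambda)$ attached to $A$ and $\mathsf{dec}(\partial\Lambda^{(\delta)})$ attached to $B$, each localized at one side of the annulus, whereas you use a single $\mathsf{dec}(S)$ with $S$ a thick annulus covering the whole gap. This is not a cosmetic choice: with the paper's version, $\{\sigma,\sigma_t\in A\}\cap\mathsf{dec}(\partial\Lambda)$ is measurable with respect to $(\sigma,Y,U)_{|\Lambda^{(\delta/4)}}$ (the non-green circuit lives just outside $\partial\Lambda$ and the event $\mathsf{dec}(\partial\Lambda)$ itself only involves clocks in a $\log|\partial\Lambda|$-neighbourhood of $\partial\Lambda$), while $\{\sigma,\sigma_t\in B\}\cap\mathsf{dec}(\partial\Lambda^{(\delta)})$ is measurable with respect to $(\sigma,Y,U)_{|\Lambda_n\setminus\Lambda^{(\delta/2)}}$; these two regions are \emph{disjoint and macroscopically separated}, which is exactly the hypothesis of Lemma~\ref{lem:mix_Y}. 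With your single thick $S$, the event $\mathsf{dec}(S)$ depends on clocks throughout the annulus (and the non-green circuit could sit anywhere in $S$), so neither $\{\sigma,\sigma_t\in A\}\cap\mathsf{dec}(S)$ nor $\{\sigma,\sigma_t\in B\}\cap\mathsf{dec}(S)$ can be localized on one side of the annulus: their $(\sigma,Y,U)$-supports overlap throughout $S$, and the factorization step you need is not available. This is a genuine gap, not merely bookkeeping.

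A secondary issue is the direction of the conditioning in Steps (2)--(3). The clean argument (this is precisely Lemma~\ref{lem:mix_Y}, which you should be invoking rather than re-deriving) conditions on the dynamical data $(Y,U)$: given $(Y,U)$, the two dynamical events become static events in $\sigma$ supported on disjoint separated regions, and \eqref{eq:stat_mix_used} applies. You instead condition on $\sigma$ (plus clocks near $S$), and then claim that ``the residual dependence is that of the static law of $\sigma$ across the annulus.'' But after conditioning on all of $\sigma$ there is no residual $\sigma$-dependence to decorrelate; what remains are nontrivial functions of the clock data near $S$ appearing on \emph{both} sides, and \eqref{eq:stat_mix_used} says nothing about those. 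To make the argument go through you should (i) replace the single $\mathsf{dec}(S)$ by the two localized decoupling events as in the paper, and (ii) apply Lemma~\ref{lem:mix_Y} directly. Your identification that small rhombi must be treated separately by finite-energy (Corollary~\ref{cor:ins}) does match what the paper does. Also, a minor typo: in Step (1) the inequality from Lemma~\ref{lem:mix_dec} should read $\Pro(\{\sigma,\sigma_t\in A\cap B\}\cap\mathsf{dec}(S))\ge\tfrac12\Pro(\sigma,\sigma_t\in A\cap B)$, not the reverse.
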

Our proof of Proposition \ref{prop:mix} is based on Lemma \ref{lem:mix_dec} and on another lemma that we state and prove now. Recall that the Glauber dynamics is defined by using the Poisson clocks $Y_x$ and the uniforms $U_{x,i}$, see Definition~\ref{defi:glauber}.
\begin{lemma}\label{lem:mix_Y}
For every $\delta>0$, there exist $c_\delta,C_\delta>0$ that do not depend on $n$ such that the following holds. Let $\Lambda$ be a rhombus such that $\Lambda^{(\delta)} \subseteq \Lambda_n$, let $\mathcal{A}$ be a dynamical event measurable with respect to $(\sigma,Y,U)_{|\Lambda}$ and let $\mathcal{B}$ be a dynamical event measurable with respect to $(\sigma,Y,U)_{|\Lambda_n\setminus\Lambda^{(\delta)}}$. Then,
\[
c_\delta\Pro(\mathcal{A}\cap\mathcal{B}) \le \Pro(\mathcal{A})\Pro(\mathcal{B}) \le C_\delta\Pro(\mathcal{A}\cap\mathcal{B}).
\]
\end{lemma}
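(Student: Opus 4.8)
The plan is to reduce this dynamical mixing statement to the static spatial mixing property \eqref{eq:stat_mix_initial} by conditioning on the clocks and uniforms and isolating the dependence on the initial configuration $\sigma$. First I would observe that, since $\Lambda^{(\delta)} \subseteq \Lambda_n$, the edge set $E(\Lambda_n)$ splits (up to the finitely many edges crossing between the two regions, which is where the spatial Markov property \eqref{eq:smp} enters) so that the law of $\sigma$ restricted to $\Lambda$ conditioned on its values outside $\Lambda^{(\delta)}$ is an Ising measure on (a neighbourhood of) $\Lambda$ with some boundary condition $\xi$, and \eqref{eq:stat_mix_initial} says this law is comparable, uniformly in the boundary condition, to the one with free (or any fixed) boundary condition. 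The key point is that once the value of $\sigma$ on the `annulus' $\Lambda^{(\delta)} \setminus \Lambda$ is fixed, the clocks and uniforms inside $\Lambda$ are globally independent of everything outside $\Lambda^{(\delta)}$, so $\mathcal{A}$ (measurable w.r.t.\ $(\sigma,Y,U)_{|\Lambda}$) only `sees' the outside through $\sigma_{|\Lambda}$, whose conditional law is controlled by static mixing.

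Concretely, I would write $\Pro(\mathcal{A}\cap\mathcal{B})$ by conditioning on the triple $(Y_x, U_{x,i})_{x\in\Lambda_n}$ and on $\sigma$ restricted to $\Lambda_n \setminus \Lambda$ — call the latter $\omega$. Given $\omega$ and the clocks/uniforms, $\mathbf 1_{\mathcal B}$ is determined (it depends only on data outside $\Lambda^{(\delta)}$, all of which is now fixed or still free but independent of $\Lambda$), and $\Pro(\mathcal A \mid \omega, Y, U) = \E[\,\mathbf 1_{\mathcal A} \mid \sigma_{|\Lambda}\text{-part free}, \omega, Y_{|\Lambda}, U_{|\Lambda}]$ is a function of the conditional law of $\sigma_{|\Lambda}$ given $\omega$, which by \eqref{eq:smp} equals $\mu_{\Lambda'}^{\psi(\omega)}$ for the appropriate region $\Lambda'$ and boundary condition. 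By \eqref{eq:stat_mix_initial}, replacing $\psi(\omega)$ by a fixed boundary condition changes $\Pro(\mathcal A \mid \omega, Y, U)$ by at most a multiplicative factor $c_\delta, C_\delta$; after this replacement the inner probability no longer depends on $\omega$ at all, only on $Y_{|\Lambda}, U_{|\Lambda}$. One then integrates: the remaining average over $(\omega, Y, U)$ factorizes because the inner (modified) $\mathcal A$-probability depends only on $(Y,U)_{|\Lambda}$ and $\mathcal B$ depends only on $(\omega, Y, U)_{|\Lambda_n \setminus \Lambda^{(\delta)}}$ — these two families of variables are independent. This yields $\Pro(\mathcal A \cap \mathcal B) \asymp_\delta \widetilde{\Pro}(\mathcal A)\, \Pro(\mathcal B)$ where $\widetilde\Pro(\mathcal A)$ is computed with the fixed boundary condition; a final application of \eqref{eq:stat_mix_initial} (to $\mathcal A$ alone, in the same way) gives $\widetilde\Pro(\mathcal A)\asymp_\delta \Pro(\mathcal A)$, completing the proof.

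The main obstacle I anticipate is bookkeeping the boundary: $\mathcal{A}$ is measurable with respect to $(\sigma,Y,U)_{|\Lambda}$, i.e.\ it may genuinely depend on $\sigma$ on all of $\Lambda$ and on the dynamics inside $\Lambda$ run with rates $c_x$ that look at neighbours — but a site $x \in \partial\Lambda$ has neighbours outside $\Lambda$, so the process $(\sigma_s)_{|\Lambda}$ is not autonomous. This is why the statement puts $\mathcal B$ outside the thickened rhombus $\Lambda^{(\delta)}$ rather than just outside $\Lambda$: the annulus $\Lambda^{(\delta)}\setminus\Lambda$ absorbs these boundary interactions. So I would be careful to condition on $\sigma$ and the clocks/uniforms in that whole annulus (not just $\partial\Lambda$), run the static mixing estimate on the region $\Lambda$ with the boundary data read off from the annulus, and keep $\mathcal B$ — which lives strictly outside $\Lambda^{(\delta)}$ — cleanly separated. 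The only genuinely probabilistic input is \eqref{eq:stat_mix_initial}; everything else is the observation that conditionally on $\sigma$ outside $\Lambda$, the dynamics inside and outside decouple, plus the independence of the Poisson clocks and uniforms across disjoint vertex sets.
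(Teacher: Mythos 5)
Your basic idea — condition on the clocks and uniforms so that $\mathcal A$ and $\mathcal B$ become static events about $\sigma$ in the two regions, then apply static spatial mixing — is exactly the paper's. But your ``concrete'' step has a genuine error in where you condition on $\sigma$. You fix $\omega=\sigma_{|\Lambda_n\setminus\Lambda}$, so by \eqref{eq:smp} the conditional law of $\sigma_{|\Lambda}$ is an Ising measure on $\Lambda$ with boundary condition $\psi(\omega)$ on $\partial_{\mathrm{ext}}\Lambda$ — i.e.\ \emph{directly adjacent} to $\Lambda$. At that distance, \eqref{eq:stat_mix_initial} does not apply: that estimate compares boundary conditions placed on $\partial_{\mathrm{ext}}\Lambda^{(\delta)}$, at macroscopic distance from $\Lambda$, and the comparison constants are emphatically not uniform once the boundary touches $\Lambda$ (for instance, the probability that $\sigma$ is identically $+1$ on $\partial\Lambda$ changes by a factor exponential in $|\partial\Lambda|$ when you flip the adjacent b.c.\ from all $+$ to all $-$). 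Your ``obstacle'' paragraph compounds rather than repairs this: conditioning on $\sigma$ \emph{in the annulus} $\Lambda^{(\delta)}\setminus\Lambda$ again pins the boundary condition to $\partial_{\mathrm{ext}}\Lambda$ and destroys the gap that \eqref{eq:stat_mix_initial} needs. The correct region would be $\omega=\sigma_{|\Lambda_n\setminus\Lambda^{(\delta)}}$, leaving $\sigma$ free on all of $\Lambda^{(\delta)}$, so that the conditional law is $\mu_{\Lambda^{(\delta)}}^{\psi(\omega)}$ and \eqref{eq:stat_mix_initial} (for events depending only on $\Lambda$) applies.

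Even with that fix, your route is longer than necessary. You never need to condition on $\sigma$ at all: the paper conditions only on $(Y,U)$, under which $\mathcal A$ and $\mathcal B$ become a pair of static events for $\sigma$ under $\mu_n$ depending respectively on $\Lambda$ and on $\Lambda_n\setminus\Lambda^{(\delta)}$, and applies the ready-made consequence \eqref{eq:stat_mix_used} directly, followed by the independence of $(Y,U)_{|\Lambda}$ and $(Y,U)_{|\Lambda_n\setminus\Lambda^{(\delta)}}$ to factor the remaining expectation. There is then no \eqref{eq:smp}, no boundary-condition bookkeeping, and no auxiliary $\widetilde{\Pro}$ to compare back to $\Pro$.
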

\begin{proof}
In this proof, we use the symbol $\asymp_\delta$ to denote the same thing as $\asymp$ but with constants that may depend on $\delta$. To prove the claim, we are going to use the analogous result for the static Ising model (stated in Section \ref{ssec:inv}). By writing $1_\mathcal{A}$ as a function of $(\sigma,Y,U)_{|\Lambda}$ and $1_\mathcal{B}$ as a function of $(\sigma,Y,U)_{|\Lambda_n\setminus\Lambda^{(\delta)}}$, we have
\begin{align*}
\Pro(\mathcal{A}\cap\mathcal{B})& = \E \big[ \Pro\big(\mathcal{A}\cap\mathcal{B}\;\big|\; Y,U \big) \big]\\
& \asymp_\delta \E \Big[ \Pro \big( \mathcal{A} \:\big|\; Y,U \big) \Pro \big( \mathcal{B} \:\big|\; Y,U \big) \Big] \quad \text{by \eqref{eq:stat_mix_used} and since $ \sigma$ indep.\ of $(Y,U)$}\\
& = \E \Big[ \Pro \big( \mathcal{A} \:\big|\; (Y,U)_{|\Lambda} \big) \Pro \big( \mathcal{B} \:\big|\; (Y,U)_{|\Lambda_n\setminus\Lambda^{(\delta)}} \big) \Big]\\
& = \Pro(\mathcal{A}) \Pro(\mathcal{B}),
\end{align*}
where is the two last equalities we have used that $\sigma$, $(Y,U)_{|\Lambda}$ and $(Y,U)_{|\Lambda_n\setminus\Lambda^{(\delta)}}$ are independent.
\end{proof}
We can now prove Proposition \ref{prop:mix}. Below, given a rhombus $\Lambda$, we let $\mathsf{dec}=\mathsf{dec} (\partial\Lambda)$ and $\mathsf{dec}^{(\delta)}=\mathsf{dec}(\partial(\Lambda^{(\delta)}))$. (Recall the notation $\mathsf{dec}(S)$ from Definition~\ref{defi:dec}.)
\begin{proof}[Proof of Proposition \ref{prop:mix}]
Let $\delta>0$, $t \in [0,\tau]$ and let $\Lambda$ be a rhombus such that $\Lambda^{(\delta)}\subseteq\Lambda_n$. Let $A,B$ as in the statement. The proof is based on the observation that, if the sides of $\Lambda$ are larger than some $M_\delta>0$ (that we fix), then $\{\sigma,\sigma_t \in A\}\cap\mathrm{dec}$ is measurable with respect to $(\sigma,Y,U)$ restricted to $\Lambda^{(\delta/4)}$. This comes from the fact that, on $\mathsf{dec}$, there is a circuit made of non-green sites that is included in $\Lambda^{(\delta/4)}$ and surrounds $\Lambda$. Similarly, $\{\sigma,\sigma_t \in B\} \cap \mathsf{dec}^{(\delta)}$ is measurable with respect to $(\sigma,Y,U)$ restricted to $\Lambda_n \setminus \Lambda^{(\delta/2)}$. 
 Let us assume -- as a first step -- that the sides of $\Lambda$ are indeed larger than $M_\delta$. Then, Lemma \ref{lem:mix_Y} gives
\begin{align*}
&\Pro\big( \{\sigma,\sigma_t \in A\}\cap \mathsf{dec} \big) \cdot \Pro \big( \{\sigma,\sigma_t \in B\}\cap \mathsf{dec}^{(\delta)}\big)\\
&\asymp_\delta \Pro\big(\{\sigma,\sigma_t \in A\}\cap \mathsf{dec} \cap \{\sigma,\sigma_t \in B\}\cap \mathsf{dec}^{(\delta)} \big),
\end{align*}
where the symbol $\asymp_\delta$ denotes the same thing as $\asymp$ but with constants that may depend on $\delta$. We conclude by applying Lemma \ref{lem:mix_dec} three times (to $S=\partial\Lambda$, $S=\partial(\Lambda^{(\delta)})$ and S=$\partial\Lambda\cup\partial\Lambda^{(\delta)}$; N.B: we use that $\mathsf{dec}\cap\mathsf{dec}^{(\delta)}\subseteq\mathsf{dec}(\partial\Lambda\cup\partial\Lambda^{(\delta)})$).

\medskip

It remains to prove the result when the sides of $\Lambda$ are less than $M_\delta$, which we now assume. We also assume that $A$ is non-empty (otherwise the result is trivial). We obtain the left-hand-side of the inequality by writing
\[
\Pro(\sigma,\sigma_t \in A)\Pro(\sigma,\sigma_t\cap B)\ge c_0\Pro(\sigma,\sigma_t\in B)\ge c_0\Pro(\sigma,\sigma_t\in A\cap B),
\]
where $c_0=\min_{E \subseteq \Omega_n \text{ that dep.\ only on $\Lambda$}} \mu_n(E)^2$ (Remark: we have used that, as mentioned at the end of Section \ref{ssec:generator}, $\Pro(\sigma,\sigma_t\in A)\ge\mu_n(A)^2$). 

We prove the right-hand-side of the inequality by using the finite energy property for the pair $(\sigma,\sigma_t)$. More precisely, we first fix some $\alpha\in\{-1,+1\}^\Lambda$ such that $\eta_{|\Lambda}=\alpha \Rightarrow \eta \in A$. If $\psi\in \{-1,+1\}^{\Lambda_n\setminus\Lambda}$, we let $\alpha|\psi$ be the spin configuration that coincides with $\alpha$ in $\Lambda$ and with $\psi$ in $\Lambda_n\setminus \Lambda$. Moreover, we let $B'=\{\eta_{|(\Lambda_n\setminus\Lambda)} : \eta \in B\} \subseteq\{-1,+1\}^{\Lambda_n\setminus\Lambda}$. We have
\begin{align*}
\Pro(\sigma,\sigma_t\in A\cap B) & \ge \sum_{\psi,\psi'\in B'} \Pro(\sigma=\alpha|\psi,\sigma_t=\alpha|\psi')\\
& = \frac{1}{4^{|\Lambda|}}\sum_{\beta,\beta' \in \{-1,+1\}^\Lambda}\sum_{\psi,\psi'\in B'} \Pro(\sigma=\alpha|\psi,\sigma_t=\alpha|\psi')\\
& \ge \frac{\cit^{|\Lambda|}}{4^{|\Lambda|}}\sum_{\beta,\beta' \in \{-1,+1\}^\Lambda}\sum_{\psi,\psi'\in B'} \Pro(\sigma=\beta|\psi,\sigma_t=\beta'|\psi') \quad \text{by Cor.\ \ref{cor:ins}}\\
& = \frac{\cit^{|\Lambda|}}{4^{|\Lambda|}} \Pro(\sigma,\sigma_t \in B) \ge \frac{\cit^{|\Lambda|}}{4^{|\Lambda|}} \Pro(\sigma,\sigma_t \in A) \Pro(\sigma,\sigma_t \in B) .
\end{align*}
This concludes the proof.
\end{proof}
We will also need to decouple an event measurable with respect to $(\sigma,\sigma_t)$ and an event measurable with respect to $\sigma$. More precisely, we will need the following proposition.
\begin{proposition}\label{prop:mixbis}
For every $\delta>0$, there exist two constants $c_\delta,C_\delta>0$ that do not depend on $n$ such that the following holds. Let $\Lambda$ be a rhombus such that $\Lambda^{(\delta)}\subseteq \Lambda_n$ and let $A$ and $B$ be two static events. If $A$ depends only on $\Lambda$ and $B$ depends only on $\Lambda_n\setminus\Lambda^{(\delta)}$, then for any $t\in[0,\tau]$,
\[
c_\delta\Pro(\sigma \in A ,\sigma,\sigma_t\in B) \le \Pro(\sigma \in A)\Pro(\sigma,\sigma_t \in B) \le C_\delta\Pro(\sigma \in A ,\sigma,\sigma_t\in B)
\]
and
\[
c_\delta\Pro(\sigma,\sigma_t \in A ,\sigma \in B) \le \Pro(\sigma,\sigma_t \in A)\Pro(\sigma \in B) \le C_\delta\Pro(\sigma,\sigma_t \in A ,\sigma \in B).
\]
\end{proposition}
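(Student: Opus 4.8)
The plan is to mimic the proof of Proposition \ref{prop:mix}, with the key observation that an event depending only on $\sigma$ (the initial configuration) can be decoupled using \emph{just} the static spatial mixing inequality \eqref{eq:stat_mix_used}, together with the trick of intersecting the dynamical event with a decoupling event so that it becomes measurable with respect to $(\sigma,Y,U)$ restricted to a slightly enlarged (resp.\ slightly shrunk) rhombus. I will prove the first displayed inequality; the second follows by reversibility of the dynamics, since $\Pro(\sigma,\sigma_t\in A,\sigma\in B)=\Pro(\sigma_t,\sigma_0\in A,\sigma_t\in B)$, i.e.\ the roles of the two time-marginals can be swapped.

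The argument splits into the same two cases as in Proposition \ref{prop:mix}, according to whether the sides of $\Lambda$ are large or small compared to some $M_\delta>0$ to be fixed. In the large case: on the decoupling event $\mathsf{dec}=\mathsf{dec}(\partial\Lambda^{(\delta)})$ (using $\partial\Lambda^{(\delta)}$ here because now it is the dynamical event $\{\sigma,\sigma_t\in B\}$ that we must localize on the outside), there is a circuit of non-green sites inside $\Lambda^{(3\delta/4)}\setminus\Lambda^{(\delta/2)}$ surrounding $\Lambda^{(\delta/2)}$, so $\{\sigma,\sigma_t\in B\}\cap\mathsf{dec}$ is measurable with respect to $(\sigma,Y,U)$ restricted to $\Lambda_n\setminus\Lambda^{(\delta/2)}$. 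The event $\{\sigma\in A\}$ is measurable with respect to $\sigma$ restricted to $\Lambda$, hence with respect to $(\sigma,Y,U)$ restricted to $\Lambda$. Now I would run the conditioning argument of Lemma \ref{lem:mix_Y}: conditionally on $(Y,U)$, the indicator of $\{\sigma\in A\}$ is a function of $\sigma_{|\Lambda}$ and the indicator of $\{\sigma,\sigma_t\in B\}\cap\mathsf{dec}$ is a function of $\sigma_{|\Lambda_n\setminus\Lambda^{(\delta/2)}}$, so \eqref{eq:stat_mix_used} applied to $\Lambda$ and the $\delta/2$-thickening gives, after taking expectation over $(Y,U)$ (which is independent of $\sigma$),
\[
\Pro\big(\sigma\in A,\,\{\sigma,\sigma_t\in B\}\cap\mathsf{dec}\big)\asymp_\delta \Pro(\sigma\in A)\,\Pro\big(\{\sigma,\sigma_t\in B\}\cap\mathsf{dec}\big).
\]
Then Lemma \ref{lem:mix_dec} (applied to $S=\partial\Lambda^{(\delta)}$) lets me remove $\mathsf{dec}$ from both sides up to constants, and since $\Pro(\{\sigma,\sigma_t\in B\}\cap\mathsf{dec})\le\Pro(\sigma,\sigma_t\in B)\le 2\Pro(\{\sigma,\sigma_t\in B\}\cap\mathsf{dec})$ and similarly for the intersected event on the left, I recover the claimed two-sided bound.

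For the small case (sides of $\Lambda$ at most $M_\delta$, so $|\Lambda|$ bounded by a $\delta$-dependent constant) I would argue exactly as at the end of the proof of Proposition \ref{prop:mix}: for the lower bound on $\Pro(\sigma\in A)\Pro(\sigma,\sigma_t\in B)$, bound $\Pro(\sigma\in A)=\mu_n(A)\ge c_0$ from below by the minimum over non-empty events depending only on $\Lambda$, giving $\Pro(\sigma\in A)\Pro(\sigma,\sigma_t\in B)\ge c_0\Pro(\sigma,\sigma_t\in B)\ge c_0\Pro(\sigma\in A,\sigma,\sigma_t\in B)$; for the upper bound, fix $\alpha\in\{-1,+1\}^\Lambda$ with $\eta_{|\Lambda}=\alpha\Rightarrow\eta\in A$, write $\Pro(\sigma\in A,\sigma,\sigma_t\in B)\ge\sum_{\psi,\psi'\in B'}\Pro(\sigma=\alpha|\psi,\sigma_t=\alpha|\psi')$ with $B'$ the restriction of $B$ to $\Lambda_n\setminus\Lambda$, and apply Corollary \ref{cor:ins} coordinate by coordinate inside $\Lambda$ to replace $\alpha$ by arbitrary $\beta,\beta'$, losing a factor $(\cit/4)^{|\Lambda|}$, which is a $\delta$-dependent constant; summing recovers $\Pro(\sigma,\sigma_t\in B)\ge\Pro(\sigma\in A)\Pro(\sigma,\sigma_t\in B)$ up to this constant. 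The main obstacle is purely bookkeeping: choosing the chain of thickening parameters ($\delta/2$, $3\delta/4$, etc.) consistently so that on $\mathsf{dec}$ the relevant non-green circuit genuinely sits in the annulus one needs, and checking the measurability claims carefully — there is no new conceptual input beyond what is already used for Proposition \ref{prop:mix}.
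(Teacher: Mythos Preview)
Your approach is essentially the same as the paper's: localize the dynamical event by intersecting with a decoupling event, observe that the purely static event is already local, apply Lemma~\ref{lem:mix_Y}, and then remove $\mathsf{dec}$ via Lemma~\ref{lem:mix_dec}. The paper carries this out for the second displayed inequality (dynamic event inside, static event outside) and declares the first to be ``the same''; you do the first and try to deduce the second. The large-case and small-case split, and the treatment of the small case via Corollary~\ref{cor:ins}, match the paper exactly.

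One point does not work as written: your reversibility reduction of the second estimate to the first. Reversibility gives $\Pro(\sigma,\sigma_t\in A,\,\sigma\in B)=\Pro(\sigma,\sigma_t\in A,\,\sigma_t\in B)$, which turns the static event $\{\sigma\in B\}$ into $\{\sigma_t\in B\}$. But $\{\sigma_t\in B\}$ is \emph{not} measurable with respect to $(\sigma,Y,U)_{|\Lambda_n\setminus\Lambda^{(\delta)}}$ (the value of $\sigma_t$ outside $\Lambda^{(\delta)}$ can depend on the dynamics spreading from inside), so you cannot feed it into the Lemma~\ref{lem:mix_Y} argument you set up for the first estimate. In other words, reversibility swaps the time index on the static marginal but does not swap ``inside'' and ``outside'', and the first estimate specifically exploited that the static event was at time $0$ and hence automatically local. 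The fix is simply to run the analogous argument directly for the second estimate: intersect the \emph{inner} dynamical event $\{\sigma,\sigma_t\in A\}$ with $\mathsf{dec}(\partial\Lambda)$ so that it becomes measurable with respect to $(\sigma,Y,U)_{|\Lambda^{(\delta/4)}}$, note that $\{\sigma\in B\}$ is already measurable with respect to $(\sigma,Y,U)_{|\Lambda_n\setminus\Lambda^{(\delta)}}$, and proceed exactly as before. This is precisely what the paper does.
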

\begin{proof}
Let us prove the second estimate. The proof of the first one is the same. As in the proof of Proposition \ref{prop:mix}, we notice that it the sides of $\Lambda$ are large enough, then Lemma \ref{lem:mix_Y} implies that
\begin{align*}
&\Pro\big( \{\sigma,\sigma_t \in A\}\cap \mathsf{dec} \big) \Pro (\sigma\in B)\\
&\asymp_\delta \Pro\big(\{\sigma,\sigma_t \in A\}\cap \mathsf{dec} \cap \{\sigma \in B\} \big).
\end{align*}
Again as in the proof of Proposition \ref{prop:mix}, we conclude by applying Lemma~\ref{lem:mix_dec} and if the sides of $\Lambda$ are less than some constant then we conclude by applying Corollary~\ref{cor:ins} to every $x \in \Lambda$.
\end{proof}

\subsection{Combining spatial mixing and FKG inequality}

As in the previous section, we fix some $n \ge 1$ and we let $(\sigma_s)_{s\ge 0}$ be the Glauber dynamics in $\Lambda_n$ (from Definition~\ref{defi:glauber}) that starts from $\sigma \sim \mu_n=\mu_{n,\beta}$. (Recall that for all of Section \ref{sec:mix_sub} we have fixed some $\beta \le \beta_c$.)
By the dynamical FKG inequality (see Section \ref{ssec:dyn_fkg}), two increasing events measurable with respect to the pair $(\sigma,\sigma_t)$ are positively correlated. However, we will sometimes need to use that two events correlate well even if they are not both increasing everywhere. In practice, we will use the following proposition, whose proof is close to that of the spatial mixing property Proposition \ref{prop:mix_fkg} (and also relies on the dynamical FKG inequality).
We say that a static event $A\subseteq\Omega_n$ is increasing in some set $V\subseteq\Lambda_n$ if $1_A(\psi)\ge1_A(\eta)$ for every spin configurations $\eta,\psi\in \Omega_n$ such that $\psi$ coincides with $\eta$ in $\Lambda_n\setminus V$ and $\psi(x)\ge \eta(x)$ for every $x\in V$. 
\begin{proposition}\label{prop:mix_fkg}
For every $\delta>0$, there exist two constants $c_\delta,C_\delta>0$ that do not depend on $n$ such that the following holds. Let $\Lambda$ be a rhombus such that $\Lambda^{(\delta)}\subseteq \Lambda_n$ and let $A$ and $B$ be two static events. If $A$ is increasing and depends only on $\Lambda$ and $B$ is increasing in $\Lambda^{(\delta)}$, then for every $t \in[0,\tau]$,
\[
\Pro(\sigma,\sigma_t\in A \cap B) \ge c_\delta \Pro(\sigma,\sigma_t\in A)\Pro(\sigma,\sigma_t \in B).
\]
\end{proposition}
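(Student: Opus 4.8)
Throughout I write $\Pro(\,\cdot\in C)$ for $\Pro(\sigma\in C,\sigma_t\in C)$. The plan is to follow the proof of Proposition~\ref{prop:mix}, simply replacing the product‑decoupling step (Lemma~\ref{lem:mix_Y}) by the dynamical FKG inequality of Section~\ref{ssec:dyn_fkg}. This can work even though $B$ is not globally increasing because the region in which $B$ \emph{is} increasing, namely $\Lambda^{(\delta)}$, contains a full annular neighbourhood of $\Lambda$: once a separating circuit \emph{inside} $\Lambda^{(\delta)}$ has been produced, everything happening in its interior is increasing. I would first dispose of the case in which the sides of $\Lambda$ are bounded by a constant $M_\delta$ (depending only on $\delta$), by a direct finite‑energy argument as at the end of the proof of Proposition~\ref{prop:mix}: assuming $A\neq\emptyset$, the all‑$+1$ configuration on $\Lambda$ lies in $A$, so $\{\sigma|_\Lambda\equiv+1,\ \sigma_t|_\Lambda\equiv+1\}\subseteq\{\sigma,\sigma_t\in A\}$; since $B$ is increasing in $\Lambda^{(\delta)}\supseteq\Lambda$, raising the $\Lambda$‑spins of a pair in $B\times B$ to $+1$ keeps it in $B\times B$, and by Corollary~\ref{cor:ins} (one application per site of $\Lambda$, the operation being at most $4^{|\Lambda|}$‑to‑one) one gets $\Pro(\,\cdot\in A\cap B)\ge(\cit/4)^{|\Lambda|}\Pro(\,\cdot\in B)\ge c_\delta\Pro(\,\cdot\in A)\Pro(\,\cdot\in B)$, using $|\Lambda|\le(2M_\delta+1)^2$.

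For rhombi whose sides exceed $M_\delta$, set $\mathrm{Ann}=\Lambda^{(\delta)}\setminus\Lambda^{(\delta/2)}$ and let $\mathcal C$ be the event that there is a non‑green circuit (a circuit of sites whose clock has not rung before time $\tau$) in $\mathrm{Ann}$ surrounding $\Lambda^{(\delta/2)}$. For $M_\delta$ large a standard planar‑duality argument gives $\mathsf{dec}(\partial\Lambda^{(\delta/2)})\subseteq\mathcal C$, since on $\mathsf{dec}(\partial\Lambda^{(\delta/2)})$ every green component meeting $\partial\Lambda^{(\delta/2)}$ has size at most $\log|\partial\Lambda^{(\delta/2)}|$, which is far below the width of $\mathrm{Ann}$ (of order the side of $\Lambda$), so green cannot cross $\mathrm{Ann}$. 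On $\mathcal C$ let $\gamma_0$ be the \emph{outermost} such circuit, and let $\mathrm{int}(\gamma)$ denote the region strictly enclosed by a circuit $\gamma$. For a fixed admissible $\gamma$, the event $\{\gamma_0=\gamma\}$ is measurable with respect to the clocks in $\Lambda_n\setminus\mathrm{int}(\gamma)$. I would then condition on $(\sigma,Y,U)$ restricted to $\Lambda_n\setminus\mathrm{int}(\gamma)$, on the event $\{\gamma_0=\gamma\}$; denote by $\mathcal G_\gamma$ this $\sigma$‑field. Since $\gamma$ is non‑green and $t\le\tau$, the spins on $\gamma$ stay frozen on $[0,t]$, and combined with the spatial Markov property~\eqref{eq:smp} this yields that, conditionally on $\mathcal G_\gamma$, the pair $(\sigma,\sigma_t)$ restricted to $\mathrm{int}(\gamma)$ is the Glauber dynamics in $\mathrm{int}(\gamma)$ with boundary condition $\xi:=\sigma|_{\partial_{\mathrm{ext}}\mathrm{int}(\gamma)}$ (a function of $\mathcal G_\gamma$, since $\partial_{\mathrm{ext}}\mathrm{int}(\gamma)\subseteq\gamma$), run up to time $t$ from $\mu^\xi_{\mathrm{int}(\gamma)}$, while $(\sigma,\sigma_t)$ outside $\mathrm{int}(\gamma)$ is $\mathcal G_\gamma$‑measurable.

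Now comes the FKG step. Because $\Lambda\subseteq\Lambda^{(\delta/2)}\subseteq\mathrm{int}(\gamma)\subseteq\Lambda^{(\delta)}$, conditionally on $\mathcal G_\gamma$ both $1_A(\sigma)1_A(\sigma_t)$ and $1_B(\sigma)1_B(\sigma_t)$ are increasing functions of $(\sigma,\sigma_t)|_{\mathrm{int}(\gamma)}$ — for $A$ because $A$ is increasing and depends only on $\Lambda$, and for $B$ precisely because $\mathrm{int}(\gamma)\subseteq\Lambda^{(\delta)}$, so that the restriction of $B$ to the interior is increasing even though $B$ itself need not be — hence the dynamical FKG inequality with boundary conditions (Remark~\ref{rem:dyn_fkg}) gives $\Pro(\,\cdot\in A\cap B\mid\mathcal G_\gamma)\ge\Pro(\,\cdot\in A\mid\mathcal G_\gamma)\,\Pro(\,\cdot\in B\mid\mathcal G_\gamma)$. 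It remains to bound $\Pro(\,\cdot\in A\mid\mathcal G_\gamma)$ from below uniformly in $(\gamma,\xi)$: this conditional probability is the probability of $\{\sigma,\sigma_t\in A\}$ for the Glauber dynamics in $\mathrm{int}(\gamma)$ with boundary condition $\xi$, and since $A$ depends only on $\Lambda$ with $\Lambda^{(\delta/2)}\subseteq\mathrm{int}(\gamma)$, the same green‑circuit decoupling as in Proposition~\ref{prop:mix} together with the static spatial mixing~\eqref{eq:stat_mix_initial} applied inside $\Lambda^{(\delta/2)}$ shows it is $\asymp_\delta\Pro(\,\cdot\in A)$ (this is a "dynamical spatial mixing for $A$ with general boundary conditions", which I would record as a separate lemma, its proof copying that of Proposition~\ref{prop:mix}). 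Plugging this in, summing over $\gamma$, and using $\mathcal C\supseteq\mathsf{dec}(\partial\Lambda^{(\delta/2)})$ with Lemma~\ref{lem:mix_dec} to get $\Pro(\{\sigma,\sigma_t\in B\}\cap\mathcal C)\ge\tfrac12\Pro(\,\cdot\in B)$, I obtain
\[
\Pro(\,\cdot\in A\cap B)\ \ge\ \Pro\big(\{\sigma,\sigma_t\in A\cap B\}\cap\mathcal C\big)\ \ge\ c_\delta\,\Pro(\,\cdot\in A)\,\Pro\big(\{\sigma,\sigma_t\in B\}\cap\mathcal C\big)\ \ge\ \tfrac{c_\delta}{2}\,\Pro(\,\cdot\in A)\,\Pro(\,\cdot\in B).
\]

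The hard part is, as everywhere in Sections~\ref{sec:ins_tol}--\ref{sec:mix_sub}, the lack of a spatial Markov property for $(\sigma,\sigma_t)$: one cannot just condition on the values of the pair outside $\Lambda^{(\delta)}$ and invoke FKG. The device that replaces it is the conditioning on the outermost non‑green circuit $\gamma_0$ inside $\mathrm{Ann}$, which simultaneously (i) freezes a genuine boundary condition for the interior dynamics, so that Remark~\ref{rem:dyn_fkg} applies verbatim there, and (ii) keeps $\mathrm{int}(\gamma_0)$ inside $\Lambda^{(\delta)}$, which is exactly what makes the restriction of $B$ to the interior increasing. The only ingredient genuinely beyond the toolbox of Proposition~\ref{prop:mix} is the uniform‑in‑$(\gamma,\xi)$ comparison of $\Pro(\,\cdot\in A\mid\mathcal G_\gamma)$ with $\Pro(\,\cdot\in A)$; this is routine given~\eqref{eq:stat_mix_initial} and the green‑set estimates, but it is the step that produces the dependence of $c_\delta$ on $\delta$.
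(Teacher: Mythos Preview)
Your proposal is correct and follows essentially the same approach as the paper: both condition on the outermost non-green circuit in $\Lambda^{(\delta)}\setminus\Lambda^{(\delta/2)}$, apply the dynamical FKG inequality of Remark~\ref{rem:dyn_fkg} inside, and then remove the conditioning on the $A$-factor via a green-circuit decoupling combined with static spatial mixing (the paper does this inline using Lemma~\ref{lem:mix_Y} and the event $\mathsf{dec}(\partial\Lambda)$ rather than as a separate lemma). The small-rhombus case and the use of Lemma~\ref{lem:mix_dec} to control the cost of the circuit event are also handled identically.
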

\begin{proof}
Let $\delta>0$ and let $\Lambda$ be a rhombus such that $\Lambda^{(\delta)}\subseteq\Lambda_n$. We first observe that if the sides of $\Lambda$ are less than some constant then  the proposition is a consequence of Corollary \ref{cor:ins} (applied to every $x \in \Lambda$ -- like in the proof of Proposition \ref{prop:mix}), so we assume that these sides are larger than some $M_\delta>0$ that we will fix during the proof. We define $\mathsf{dec}^{(\delta)}$ and $\mathsf{dec}$ as in the previous section, i.e.\ $\mathsf{dec}=\mathsf{dec}( \partial\Lambda)$ and $\mathsf{dec}^{(\delta)}=\mathsf{dec}(\partial(\Lambda^{(\delta)}))$. Moreover, we let $\Gamma$ be the outermost circuit in $\Lambda^{(\delta)}\setminus\Lambda^{(\delta/2)}$ that surrounds $\Lambda^{(\delta/2)}$ and is made of non-green sites (recall that a green site is a site whose clock has rung before time $\tau$). This circuit exists on the event $\mathsf{dec}^{(\delta)}$ as soon as $M_\delta$ is large enough, which we assume. If this circuit does not exist, we let $\Gamma=\emptyset$ (this is a convention that has no importance). If $\gamma$ is a circuit that surrounds $\Lambda^{(\delta/2)}$, we let $\gamma_{\mathrm{out}}$ denote the union of $\gamma$ and of all the points of $\Lambda_n$ that are separated from $\Lambda^{(\delta/2)}$ by $\gamma$ (and we use the convention $\gamma_\mathrm{out}=\Lambda_n$ if $\gamma=\emptyset$). We have
\begin{align}\label{eq:mixfkg0}
\Pro(\sigma,\sigma_t \in A\cap B) & = \E \Big[  \Pro\big( \sigma,\sigma_t \in A\cap B \; \big| \; \Gamma, (\sigma,\sigma_t)_{|\Gamma_\mathrm{out}} \big)\Big]\nonumber\\
& \ge \E \Big[  \Pro\big( \sigma,\sigma_t \in A\cap B \; \big| \; \Gamma, (\sigma,\sigma_t)_{|\Gamma_\mathrm{out}} \big)1_{\{\Gamma \neq \emptyset\}} \Big].
\end{align}
Let $\gamma$ and $(\eta,\psi)$ be possible values for $\Gamma$ and $(\sigma,\sigma_t)_{|\Gamma_\mathrm{out}}$ respectively, with $\gamma \neq \emptyset$, and let
\[
\mathcal E_{\gamma,\eta,\psi}=\{\Gamma=\gamma \} \cap \big\{ (\sigma,\sigma_t)_{|\Gamma_\mathrm{out}}=(\eta,\psi)\big\}.
\]
We claim that we have the following due to the spatial Markov property of the static Ising model: Let $\gamma_\mathrm{int}=\Lambda_n\setminus\gamma_\mathrm{out}$ be the region surrounded by $\gamma$. Then, conditionally on $\mathcal{E}_{\gamma,\eta,\psi}$, the law of $(\sigma_s)_{s\in[0,\tau]}$ in $\gamma_\mathrm{int}$ is that of the Glauber dynamics in this set with boundary conditions given by $\eta_{|\gamma}$, and starting from its invariant probability measure.

To prove this claim, we first observe that it is true if we rather condition on the event $\mathcal{E}'_{\gamma,\eta,\psi}$ that $\sigma$ coincides with $\eta$ in $\gamma$ and that no clock in $\gamma$ has rung before time $\tau$. The event $\mathcal{E}_{\gamma,\eta,\psi}$ equals the event $\mathcal{E}_{\gamma,\eta,\psi}'$ intersected with an event of the $\sigma$-algebra generated $(\sigma,Y,U)_{|\gamma_\mathrm{out}}$. Moreover, conditionally on $\mathcal{E}'_{\gamma,\eta,\psi}$, the process $(\sigma_s)_{s\in[0,\tau]}$ in $\gamma_\mathrm{int}$ is independent of $(\sigma,Y,U)_{|\gamma_\mathrm{out}}$. As a result, the law of $(\sigma_s)_{s\in[0,\tau]}$ in $\gamma_\mathrm{int}$ conditionally on $\mathcal{E}_{\gamma,\eta,\psi}$ is the same as conditionally on~$\mathcal{E}'_{\gamma,\eta,\psi}$. This proves the claim.

\medskip

We are now going to apply the dynamical FKG inequality stated in Section~\ref{ssec:dyn_fkg}: by the claim just above and the dynamical FKG inequality for the Glauber dynamics in $\gamma_\mathrm{int}$ with boundary conditions given by $\eta_{|\gamma}$ (see Remark \ref{rem:dyn_fkg}), we have\footnote{Here we also use that there exist $A',B',A'',B''\subseteq \{-1,+1\}^{\gamma_\mathrm{int}}$ that are increasing, whose definition depends on $\gamma$, $\eta$ and $\psi$, and that satisfy that, on the event $\mathcal E_{\gamma,\eta,\psi}$, $\{\sigma,\sigma_t \in A\} = \{ (\sigma,\sigma_t)_{|\gamma_\mathrm{int}}\in A'\times A''\}$ and  $\{\sigma,\sigma_t \in B\} = \{ (\sigma,\sigma_t)_{|\gamma_\mathrm{int}}\in B'\times B''\}$.}
\begin{equation}\label{eq:mixfkg1}
\Pro\big( \sigma,\sigma_t \in A\cap B \; \big| \; \mathcal E_{\gamma,\eta,\psi} \big) \ge \Pro\big( \sigma,\sigma_t \in A \; \big| \; \mathcal E_{\gamma,\eta,\psi} \big) \Pro\big( \sigma,\sigma_t \in B \; \big| \; \mathcal E_{\gamma,\eta,\psi} \big).
\end{equation}


Now, we are going to erase the conditioning in the first term of the right-hand-side of \eqref{eq:mixfkg1}. 
 We observe that, if $M_\delta$ large enough -- which we assume --, then the event $\{ \sigma,\sigma_t \in A\} \cap \mathsf{dec}$ is measurable with respect to $(\sigma,Y,U)$ restricted to $\Lambda^{(\delta/4)}$. Moreover, $\mathcal E_{\gamma,\eta,\psi}$ is measurable with respect to $(\sigma,Y,U)$ restricted to the complement of $\Lambda^{(\delta/2)}$. As a result (by using Lemma~\ref{lem:mix_Y} in the second inequality), there exists a constant $c_\delta>0$ such that
\begin{align}\label{eq:mixfkg2}
\Pro\big( \sigma,\sigma_t \in A \; \big| \; \mathcal E_{\gamma,\eta,\psi} \big) & \ge \frac{\Pro\big(\{ \sigma,\sigma_t \in A\} \cap \mathsf{dec} \cap \mathcal{E}_{\gamma,\eta,\psi} \big)}{\Pro\big( \mathcal E_{\gamma,\eta,\psi}\big)}\nonumber\\
& \ge c_\delta\Pro\big( \{\sigma,\sigma_t \in A\}\cap \mathsf{dec}\big).
\end{align}
By combining \eqref{eq:mixfkg0}, \eqref{eq:mixfkg1} and \eqref{eq:mixfkg2}, we obtain that
\begin{align*}
\Pro\big(\sigma,\sigma_t \in A\cap B\big)  & \ge c_\delta\Pro\big( \{\sigma,\sigma_t \in A\}\cap \mathsf{dec}\big) \cdot \Pro\big(\{ \sigma,\sigma_t \in B \}\cap \{\Gamma \neq \emptyset\} \big)\\
&  \ge c_\delta\Pro\big( \{\sigma,\sigma_t \in A\}\cap \mathsf{dec}\big) \cdot \Pro\big(\{ \sigma,\sigma_t \in B \}\cap \mathsf{dec}^{(\delta)}  \big).
\end{align*}
We conclude by applying Lemma \ref{lem:mix_dec}.
\end{proof}
\subsection{Quasi-invariance by translation}\label{ssec:inv_dyn}
As explained at the end of Section \ref{ssec:1.2}, since we work in finite volume, we do not have translation invariance but we rather have a quasi-invariance by translation property that we state and prove now.
Note that in the following statement, contrary to the previous sections, $(\sigma_s)_{s \ge 0}$ is the Glauber dynamics in $\Lambda_{2n}$ starting from $\sigma\sim\mu_{2n}$ (so we work in $\Lambda_{2n}$ rather than $\Lambda_n$). Recall that if $m\ge 1$, $\eta \in \{-1,1\}^{\Lambda_{2m}(x)}$ and $x\in \T$, we let $T_{-x}\eta = \eta(x+\cdot) \in \Omega_{2m}$.
\begin{proposition}\label{prop:quasi-inv}
There exist $c,C>0$ such that the following holds. Let $n\ge m \ge 1$, let $(\sigma_s)_{s \ge 0}$ be the Glauber dynamics in $\Lambda_{2n}$ starting from $\sigma\sim\mu_{2n}$ and let $(\sigma_s')_{s\ge 0}$ denote the Glauber dynamics in $\Lambda_{2m}$ starting from $\sigma'\sim\mu_{2m}$. Moreover, let $A\subseteq\Omega_{2m}$ be a static event that depends only on $\Lambda_m$ and let $x \in \T$ such that $\Lambda_m(x)\subseteq \Lambda_n$. Then, for any $t \in[0,\tau]$,
\begin{equation}\label{eq:inv_glauber}
c\Pro(\sigma',\sigma_t' \in A) \le \Pro\big(\sigma,\sigma_t \in T_xA\big) \le C\Pro(\sigma',\sigma'_t \in A),
\end{equation}
where $T_xA$ is the event $A$ translated by $x$ i.e.\ $T_xA=\{ \eta \in \Omega_{2n} : T_{-x}(\eta_{|\Lambda_{2m}(x)}) \in A \}$.
\end{proposition}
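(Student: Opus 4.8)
The plan is to compare the Glauber dynamics $(\sigma_s)$ in $\Lambda_{2n}$ with the Glauber dynamics $(\sigma'_s)$ in $\Lambda_{2m}$ by relating each of $\Pro(\sigma,\sigma_t\in T_xA)$ and $\Pro(\sigma',\sigma'_t\in A)$ to a common intermediate quantity, using the decoupling lemmas established in the previous sections. First I would fix $\delta>0$ small (say $\delta = 1/2$, so that $\Lambda^{(\delta)}_m \subseteq \Lambda_{1.5m}$) and consider the rhombus $\Lambda_m(x)$ inside $\Lambda_{2n}$. Since $A$ depends only on $\Lambda_m$, the event $T_xA$ depends only on $\Lambda_m(x)$. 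The key idea, exactly as in the proof of Proposition~\ref{prop:mix}, is that on the decoupling event $\mathsf{dec}(\partial \Lambda_m(x))$ there is a circuit of non-green sites surrounding $\Lambda_m(x)$ inside $\Lambda_{1.25m}(x)$; conditioning on the outermost such circuit $\Gamma$ and on $(\sigma,\sigma_t)_{|\Gamma_{\mathrm{out}}}$, the process $(\sigma_s)_{s\in[0,\tau]}$ inside $\gamma_{\mathrm{int}}$ is (by the spatial Markov property of the static Ising model together with the fact that no clock on $\Gamma$ has rung, as argued in the proof of Proposition~\ref{prop:mix_fkg}) a Glauber dynamics in $\gamma_{\mathrm{int}}$ with boundary condition $\eta_{|\Gamma}$ started from its invariant measure.

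The next step is to remove the dependence on the random environment outside. Using Lemma~\ref{lem:mix_dec} I would first insert $\mathsf{dec}(\partial\Lambda_m(x))$ into the probability at a multiplicative cost in $[1/2,1]$. Then, conditionally on the circuit $\Gamma=\gamma$ and boundary data $(\eta,\psi)$, the quantity $\Pro(\sigma,\sigma_t\in T_xA\mid \mathcal E_{\gamma,\eta,\psi})$ is the probability, for the Glauber dynamics in $\gamma_{\mathrm{int}}\supseteq\Lambda_m(x)$ with boundary condition $\eta_{|\gamma}$ started from equilibrium, that $T_xA$ holds at times $0$ and $t$. Here I invoke two comparisons. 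On the static side, the spatial mixing estimate \eqref{eq:stat_mix_initial} (applied to the rhombus $\Lambda_m(x)$ thickened by $\delta$, which sits inside $\gamma_{\mathrm{int}}$) shows that the law of the initial configuration restricted to $\Lambda_m(x)$, under any such boundary condition, is comparable up to constants to the free-boundary-condition law $\mu_{2m}$ restricted to $\Lambda_m$ after translating by $-x$ — this is exactly \eqref{eq:stat_mix_box}. On the dynamical side, once the circuit of non-green sites is fixed, the dynamics inside is frozen at the circuit during $[0,\tau]$, so by the same decoupling argument the conditional probability of $\{\sigma,\sigma_t\in T_xA\}\cap \mathsf{dec}$ is comparable, up to multiplicative constants depending only on $\delta$ (hence absolute here), to $\Pro(\sigma',\sigma'_t\in A)$ where $(\sigma'_s)$ is the free-boundary Glauber dynamics in $\Lambda_{2m}$; indeed one runs the identical argument — inserting $\mathsf{dec}(\partial\Lambda_m)$, conditioning on the surrounding non-green circuit — for the dynamics in $\Lambda_{2m}$ itself, reducing both sides to the same ``dynamics inside a random circuit with random boundary data'' and then applying \eqref{eq:stat_mix_box} and Lemma~\ref{lem:mix_Y} to match the environments. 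Averaging over $\Gamma$ and the boundary data, and using Lemma~\ref{lem:mix_dec} once more, yields \eqref{eq:inv_glauber}. As usual there is the separate easy case where $m$ is smaller than an absolute constant, handled by applying Corollary~\ref{cor:ins} finitely many times to move between $\mu_{2m}$-dynamics and $\mu_{2n}$-dynamics restricted to a bounded region.

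The main obstacle I anticipate is the bookkeeping needed to show that \emph{both} $\Pro(\sigma,\sigma_t\in T_xA)$ and $\Pro(\sigma',\sigma'_t\in A)$ reduce to the \emph{same} intermediate object — namely the pair-probability for a Glauber dynamics inside a random domain $\gamma_{\mathrm{int}}$ with a random boundary condition coming from a static Ising configuration — so that the static spatial mixing \eqref{eq:stat_mix_box} and the dynamical decoupling Lemma~\ref{lem:mix_Y} can be applied symmetrically on the two sides. One must be careful that the comparison constants in \eqref{eq:stat_mix_initial} and \eqref{eq:stat_mix_box} are uniform in the domain and boundary condition (which they are, by the statements in Section~\ref{ssec:inv}), and that the ``no clock rang on the circuit'' event, whose probability is $e^{-|\Gamma|\tau}$ and depends on $\Gamma$, cancels between numerator and denominator when one writes the conditional probabilities; this is precisely the mechanism already used in the proof of Proposition~\ref{prop:mix_fkg}, so no genuinely new difficulty arises — only a careful repetition of that scheme with $\Lambda_m(x)\subseteq\Lambda_{2n}$ on one side and $\Lambda_m\subseteq\Lambda_{2m}$ on the other.
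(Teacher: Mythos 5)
Your proposal is in the right spirit — insert decoupling events, exploit the locality of the dynamics on those events, and invoke static spatial mixing — and would in all likelihood work, but it is substantially more roundabout than the paper's proof, and the extra detour is exactly where your anticipated ``bookkeeping'' difficulties live. You propose to condition on the outermost non-green circuit $\Gamma$ and on the boundary data $(\sigma,\sigma_t)_{|\Gamma_{\mathrm{out}}}$, on \emph{both} sides (for the $\Lambda_{2n}$ dynamics and for the $\Lambda_{2m}$ dynamics), and then match the two. But the two random pairs (circuit, boundary data) have genuinely different laws, so ``matching environments'' requires either a uniform (over admissible $(\gamma,\eta,\psi)$) comparison of conditional probabilities — which would in turn need analogues of Lemmas~\ref{lem:ins_tol}--\ref{lem:mix_dec} for Glauber dynamics in a general domain with general boundary conditions, none of which is stated in the paper — or some additional averaging argument. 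None of this is wrong, but it is more machinery than the statement demands.

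The paper's proof sidesteps circuit conditioning altogether by a cleaner observation. Define the single function
\[
f(\eta)=\Pro\big(\{\sigma',\sigma'_t\in A\}\cap\mathsf{dec}'(\Lambda_m)\;\big|\;\sigma'=\eta\big),\qquad \eta\in\Omega_{2m},
\]
i.e.\ the conditional probability given the \emph{initial configuration} (not given a circuit). Two facts, both immediate from the decoupling event, do all the work. First, $f$ depends only on $\eta_{|\Lambda_{1.5m}}$ once $m$ is large, because on $\mathsf{dec}'(\Lambda_m)$ the full dynamical event is measurable with respect to $(\sigma',Y',U')_{|\Lambda_{1.5m}}$ and $(Y',U')$ is independent of $\sigma'$. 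Second — and this is the point you are re-deriving the hard way — the conditional probability of $\{\sigma,\sigma_t\in T_xA\}\cap\mathsf{dec}(\Lambda_m(x))$ given $\sigma$ is exactly $T_xf(\sigma)$: the same deterministic function of the local initial configuration, because the clocks and uniforms inside $\Lambda_{1.5m}(x)$ have the same i.i.d.\ law as those inside $\Lambda_{1.5m}$, and the update rates at interior sites are given by identical formulas in both boxes. Hence $\E[T_xf(\sigma)]=\Pro(\{\sigma,\sigma_t\in T_xA\}\cap\mathsf{dec}(\Lambda_m(x)))$ with no conditioning on circuits, and a single application of static spatial mixing \eqref{eq:stat_mix_box} gives $\E[f(\sigma')]\asymp\E[T_xf(\sigma)]$; Lemma~\ref{lem:mix_dec} removes the decoupling events. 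In short, once you fix the initial configuration, there is no random boundary environment left to match, so the entire circuit apparatus of Proposition~\ref{prop:mix_fkg} is unnecessary here. Your handling of the small-$m$ case via Corollary~\ref{cor:ins} is fine and matches what the paper implicitly does.
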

\begin{proof}
Let $M>0$ be a constant that we will fix during the proof. We first notice that it is sufficient to prove the proposition under the assumption that $m$ is larger than $M$.
So let $m\ge M$. If $f:\Omega_{2m}\rightarrow \R$, we let $T_xf : \Omega_{2n} \rightarrow \R$ be defined by $T_xf(\eta)=f(T_{-x}(\eta_{|\Lambda_{2m}(x)}))$. By spatial mixing of the static Ising model, for every function $f:\Omega_{2m}\to \mathbb R_+$ that only depends on $\Lambda_{1.5m}$ we have 
\begin{equation}
\label{eq:8}
\mathbb E[f(\sigma')] \asymp \mathbb E\big[T_xf(\sigma)\big].
  \end{equation}
(More precisely, one can obtain the above by applying \eqref{eq:stat_mix_box} to the event $\{$the spin configuration restricted to $\Lambda_{1.5m}$ equals $\eta \} \subseteq \Omega_{2m}$ for every $\eta \in\Omega_{1.5m}$.)
\smallskip
We choose $f : \eta \in \Omega_{2m} \mapsto \mathbb P(\{\sigma',\sigma_t'\in A\}\cap \mathsf{dec}'(\Lambda_{m})|\sigma'=\eta)$, where the notation $\mathsf{dec}'$ is used to denote the decoupling events from Definition \ref{defi:dec} for the Glauber dynamics $(\sigma'_s)_{s\ge 0}$. The function $f$ indeed only depends on $\Lambda_{1.5m}$ as soon as $M$ is large enough (which we assume). Applying the displayed equation above gives
  \begin{equation}
        \mathbb P\big(\{\sigma',\sigma'_t\in A\}\cap \mathsf{dec'}(\Lambda_{m})\big)\asymp   \mathbb P\big(\{\sigma,\sigma_t\in T_xA\}\cap \mathsf{dec}(\Lambda_{m}(x))\big).
      \end{equation}
The proof then follows from Lemma~\ref{lem:mix_dec}.
\end{proof}
\section{Percolation arguments for $\beta\le \beta_c$}
\label{sec:quasimulti}
Fix $\beta \le \beta_c$ for the whole section. Let $\tau$ be the time constant as defined in Section~\ref{ssec:defin-time-insert}. 

\medskip

As in \cite{TV23}, the following `dynamical four-arm probability' is at the core of our analysis:
\begin{equation}\label{eq:pi}
\pi_n(t)=\Pro\big(\sigma,\sigma_t \in A_4(n)\big),
\end{equation}
where $(\sigma_s)_{s \ge 0}$ is the Glauber dynamics in $\Lambda_{2n}$ starting from $\sigma\sim \mu_{2n}$. (Recall that $A_4(n)$ is the four-arm event defined in Section \ref{ssec:sharp}.)

\medskip

In this section, we prove the quasi-multiplicativity property for $\pi_n(t)$ and the multiscale analogous quantities $\pi_{m,n}(t)$ that we define just below. Then, we use this property and the differential formula Proposition \ref{prop:diff} to prove differential inequalities for $\pi_n(t)$ and $\Pro(\sigma,\sigma_t \in \mathrm{Cross}_n)$.
\subsection{Quasi-multiplicativity}
The quasi-multiplicativity property  is a key tool in the study of arm events. It was first established by Kesten \cite{Kes87} in the context of Bernoulli percolation, to prove  scaling relations between critical and near-critical exponents. See also  \cite{Wer07,Nol08,SS10, DMT21} for alternative proofs and additional applications in this context.  For dynamical arm events (i.e.\ events of the kind $\{$a $4$-arm event holds at times $0$ and $t\}$), quasi-multiplicativity was established by Garban, Pete and Schramm \cite{GPS10} for Bernoulli percolation under i.i.d.\ dynamics (see also the appendix of \cite{RT24}). In this section, we prove the analogue of the quasi-multiplicativity of  \cite{GPS10}   in the case of the (high temperature) Ising model,  following the approach  of \cite{DMT21,RT24} and combining it with the Cauchy--Schwarz trick (Lemma \ref{lem:CS}).
\begin{definition}
Let $n \ge m \ge 2$. We let $A_4(m,n) \subset \Omega_{2n}$ denote the event that there are four paths of alternating signs from $\partial \Lambda_m$ to $\partial\Lambda_n$ included in the annulus $\Lambda_n\setminus\Lambda_{m-1}$ (Remark: We can -- and we will -- see $A_4(m,n)$ as a subset of $\Omega_N$ for any $N\ge n$). We let
\[
\alpha_{m,n}=\mu_{2n}(A_4(m,n)) \quad \text{and} \quad \pi_{m,n}(t)=\Pro\big(\sigma,\sigma_t \in A_4(m,n)\big),
\]
where $(\sigma_s)_{s \ge 0}$ is the Glauber dynamics in $\Lambda_{2n}$ starting from $\sigma\sim \mu_{2n}$.
For $n\ge 1$, we set
\[
\alpha_{1,n}=\alpha_n \quad \text{and} \quad \pi_{1,n}(t)=\pi_n(t),
\]\end{definition}
\begin{proposition}[Quasi-multliplicativity property]\label{prop:quasi_multi}
There exist $c,C>0$ such that, for any $n\ge m \ge k \ge 1$ and $t \in[0,\tau]$,
\[
c\pi_{k,n}(t) \le \pi_{k,m}(t)\pi_{m,n}(t) \le C\pi_{k,n}(t).
\]
\end{proposition}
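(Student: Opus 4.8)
The plan is to follow the standard two-step architecture for quasi-multiplicativity — first the easy direction, then the hard direction using a "separation of arms" statement — but everywhere replacing the usual independence and gluing arguments by their dynamical analogues built in Sections 3--5: the Cauchy--Schwarz trick (Lemma \ref{lem:CS}), the dynamical finite-energy property (Corollary \ref{cor:ins}), the dynamical spatial mixing (Proposition \ref{prop:mix}), the dynamical FKG-type decoupling (Proposition \ref{prop:mix_fkg}), and the decoupling events $\mathsf{dec}(S)$ of Definition \ref{defi:dec}. By the quasi-multiplicativity at two consecutive scales it suffices (up to a harmless loss of constants, absorbing small annuli) to treat the case $k\le m/2\le m\le 2m\le n$, so I may freely insert buffer annuli of fixed aspect ratio.

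First, the \emph{upper bound} $\pi_{k,m}(t)\pi_{m,n}(t)\le C\pi_{k,n}(t)$. The event $A_4(k,n)$ contains the intersection of $A_4(k,m)$ and $A_4(m,n)$ together with a "gluing" of the two families of arms in the annulus $\Lambda_{2m}\setminus\Lambda_{m}$; in the static model this gluing has a cost bounded below by a constant (four well-separated arms can always be joined through a fixed-size annulus, FKG plus box-crossing \eqref{eq:bxp}), and the three pieces are quasi-independent by the static spatial Markov property. To make this work for the pair $(\sigma,\sigma_t)$, I first intersect with $\mathsf{dec}(\partial\Lambda_{2m})$ using Lemma \ref{lem:mix_dec} so that the restriction of $(\sigma,Y,U)$ to a slightly fattened inner region is independent of its restriction to a slightly fattened outer region (Lemma \ref{lem:mix_Y}); then I use Proposition \ref{prop:mix} / Proposition \ref{prop:mix_fkg} to decouple $\{\sigma,\sigma_t\in A_4(k,m)\}$, the (constant-probability) gluing event in the middle annulus, and $\{\sigma,\sigma_t\in A_4(m,n)\}$; finally the gluing event itself is realized with probability bounded below uniformly in $t\le\tau$ by applying finite-energy (Corollary \ref{cor:ins}) to a bounded number of sites in the middle annulus at times $0$ and $t$. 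This yields $\pi_{k,n}(t)\ge c\,\pi_{k,m}(t)\pi_{m,n}(t)$.

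Second, the \emph{lower bound} $\pi_{k,n}(t)\le C\,\pi_{k,m}(t)\pi_{m,n}(t)$, which is the hard direction and requires the dynamical \emph{separation of arms} statement (the "Proposition \ref{prop:sep}" referred to in the text): conditionally on a $4$-arm event, with probability bounded below the four arms can be taken to land on $\partial\Lambda_m$ (resp.\ $\partial\Lambda_{2m}$) at well-separated, "free" locations with crossings extending a bit into the adjacent annuli. Granting this, one decomposes $A_4(k,n)$ by conditioning on the lowest/first realizations of the four arms near $\partial\Lambda_m$ and $\partial\Lambda_{2m}$; on the separation event the inner part in $\Lambda_m$, the middle part in $\Lambda_{2m}\setminus\Lambda_m$, and the outer part in $\Lambda_n\setminus\Lambda_{2m}$ become decoupled (again via $\mathsf{dec}$, Lemma \ref{lem:mix_Y}, Proposition \ref{prop:mix}), and each factors into $\pi_{k,m}(t)$ and $\pi_{m,n}(t)$ up to constants, with the middle contributing only a bounded factor. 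The genuinely new difficulty here — flagged in the excerpt's Section \ref{sec:quasimulti} — is that the "arms" of $\{\sigma,\sigma_t\in A_4\}$ are $+1$-paths for $\sigma$ at time $0$ \emph{and} $+1$-paths for $\sigma_t$ at time $t$, and a $-1$-path for $\sigma$ can interact with a $+1$-path for $\sigma_t$; so the classical separation argument, which relies on monotone (FKG) surgery of same-sign arms, must be run carefully: one uses Proposition \ref{prop:mix_fkg} (events increasing only in the relevant region), the Cauchy--Schwarz trick Lemma \ref{lem:CS} to reduce $\Pro(\sigma\in A,\sigma_t\in B)$-type quantities to $\pi$'s at a single time, and the fact that on $\mathsf{dec}(S)$ the relevant $(\sigma,\sigma_t)$-data localizes. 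I expect essentially all the real work — and the main obstacle — to be in establishing this dynamical separation-of-arms lemma and in handling the $-1$/$+1$ interactions in the surgery; once it is in hand, assembling the lower bound is routine multiscale bookkeeping of the same flavour as in \cite{DMT21,RT24}.
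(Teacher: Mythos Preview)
Your proposal correctly identifies the relevant dynamical tools (Propositions \ref{prop:mix}, \ref{prop:mix_fkg}, \ref{prop:sep}, Lemma \ref{lem:CS}, Corollary \ref{cor:ins}) and rightly flags the $-1$/$+1$ interaction between times $0$ and $t$ as the new obstacle in the separation argument. However, you have the two directions swapped in difficulty, and this produces a genuine gap.

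The inequality $\pi_{k,n}(t)\le C\,\pi_{k,m}(t)\pi_{m,n}(t)$, which you call the hard direction, is in fact the easy one. From the deterministic inclusion $A_4(k,n)\subseteq A_4(k,m/4)\cap A_4(4m,n)$, the dynamical spatial mixing (Proposition \ref{prop:mix}, together with Proposition \ref{prop:quasi-inv}) gives $\pi_{k,n}(t)\lesssim \pi_{k,m/4}(t)\,\pi_{4m,n}(t)$ in one line; the scales are then readjusted from $m/4,4m$ back to $m$ using Proposition \ref{prop:sep} and Lemma \ref{lem:ext}. No conditioning on arm endpoints, no surgery, no separation is needed for this direction.

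Conversely, the inequality $\pi_{k,m}(t)\pi_{m,n}(t)\le C\,\pi_{k,n}(t)$, which you treat as the easy ``upper bound'', is the hard direction, and it is precisely here that separation of arms is indispensable. Your proposed argument --- decouple inner, middle and outer annuli, then realize the gluing ``by applying finite-energy (Corollary \ref{cor:ins}) to a bounded number of sites in the middle annulus'' --- cannot work: the annulus $\Lambda_{2m}\setminus\Lambda_m$ has order $m^2$ sites, so finite-energy gives a cost exponential in $m$, not a constant; and without control on where the arms land on $\partial\Lambda_m$, there is no fixed ``bounded number of sites'' whose modification forces the gluing. The parenthetical remark that ``four well-separated arms can always be joined'' is exactly the point, but well-separation is not free: it is the content of Proposition \ref{prop:sep} (at the outer scale) and its inner-scale analogue. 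In the paper one first replaces $\pi_{k,m}(t)$ and $\pi_{m,n}(t)$ by $\pi^{\mathrm{sep}}_{k,m/4}(t)$ and $\pi^{\mathrm{sep,in}}_{4m,n}(t)$ at constant cost via these separation statements, and only then glues through fixed corridors as in Lemma \ref{lem:ext}, using Proposition \ref{prop:mix_fkg} and the box-crossing property. In short, the separation-of-arms machinery you carefully set aside for the splitting direction is exactly what is missing from your treatment of the gluing direction.
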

\begin{remark}
By Proposition \ref{prop:quasi_multi} and the box-crossing property, there exists $c>0$ such that for every $n\ge m \ge 2$ and $t\in[0,\tau]$, $\pi_{m/2,2n}(t) \ge c\pi_{m,n}(t)$.
\end{remark}
\subsubsection{Separation of arms}
The main ingredient in the proof of the quasi-multiplicativity property is the proof of the so-called separation of arms property.
\begin{definition}
Let $n \ge 100$ and let $x_1,\dots,x_4$ denote the centers of the left, bottom, right and top sides of $\Lambda_n$. We let $A_4^{\textup{sep}}(m,n) \subset A_4(m,n)$ denote the (static) event that for every $i \in \{1,\dots,4\}$ odd (resp.\ even) there exists a path from $\partial \Lambda_m$ to $\partial \Lambda_n$ included in the annulus $\Lambda_n \setminus \Lambda_{m-1}$ that (a) does not intersect the three rhombii $\Lambda_{n/10}(x_j)$, $j \neq i$, (b) is only made of $+1$ (resp.\ $-1$) spins and (c) ends on $\Lambda_{n/20}(x_i)\cap\partial \Lambda_n$. Moreover, we let
\[
\pi_{m,n}^\mathrm{sep}(t)=\Pro \big( \sigma,\sigma_t \in A_4^{\textup{sep}}(m,n) \big),
\]
where $(\sigma_s)_{s\ge 0}$ is the Glauber dynamics in $\Lambda_{2n}$ starting from $\sigma\sim\mu_{2n}$.
\end{definition}
\begin{proposition}[Separation of arms]\label{prop:sep}
There exists $c>0$ such that, for every $m\ge 1$,  every $n \ge (4m)\vee 100$ and every $t \in[0,\tau]$,
\[
\pi_{m,4n}^\mathrm{sep}(t) \ge c \pi_{m,n}(t).
\]
\end{proposition}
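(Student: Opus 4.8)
The plan is to follow the standard separation-of-arms strategy (as in \cite{DMT21,RT24}), adapting it to the dynamical pair $(\sigma,\sigma_t)$ by means of the finite-energy property (Corollary~\ref{cor:ins}), the Cauchy--Schwarz trick (Lemma~\ref{lem:CS}), the dynamical FKG inequality, and the spatial mixing / quasi-multiplicativity machinery established in Sections~\ref{sec:mix_gen}--\ref{sec:mix_sub}. The overall skeleton: start from the event $\{\sigma,\sigma_t\in A_4(m,n)\}$; at a sequence of dyadic scales between $n$ and $4n$, use a ``gluing'' argument to force the four arms (at both times $0$ and $t$) to be well-separated near $\partial\Lambda_{4n}$, landing in the prescribed neighbourhoods $\Lambda_{(4n)/20}(x_i)\cap\partial\Lambda_{4n}$ of the side-midpoints, while keeping them away from the ``forbidden'' rhombii at the other three sides. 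Each gluing step should cost only a multiplicative constant, which is exactly what finite-energy and box-crossing (BXP) provide.

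First I would reduce to a one-scale statement: it suffices to show that, at a fixed scale, if the four (dynamical) arms reach $\partial\Lambda_n$ in some arbitrary configuration, then with probability at least a constant times $\pi_{m,n}(t)$ they can be extended, in the annulus $\Lambda_{4n}\setminus\Lambda_n$, to a \emph{separated} configuration at $\partial\Lambda_{4n}$. The extension is built by hand: in each of the four ``quadrant'' regions of the annulus $\Lambda_{4n}\setminus\Lambda_n$ one prescribes a monochromatic path (of the appropriate sign, $+1$ for odd indices, $-1$ for even) that (a) connects $\partial\Lambda_n$ to the target window $\Lambda_{(4n)/20}(x_i)\cap\partial\Lambda_{4n}$, (b) avoids the rhombii $\Lambda_{(4n)/10}(x_j)$ for $j\neq i$, and (c) is forced to land within a thin ``landing strip''. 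The existence of such paths for the \emph{static} Ising model at $\beta\le\beta_c$ is a consequence of the box-crossing property \eqref{eq:bxp} (together with FKG); one first produces the four constant-sign paths in the static measure with a boundary-condition that is worst-case (using \eqref{eq:mon} / \eqref{eq:stat_mix_initial}), getting a constant lower bound for the probability of the separating configuration. One must also insert, by finite-energy, a monochromatic ``barrier'' circuit in a narrow sub-annulus to prevent the four arms from merging, i.e.\ to guarantee that the arms we produce really do separate the four windows.

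The dynamical upgrade is where the real work lies. I would condition on the restriction of $(\sigma,\sigma_t)$ to $\overline{\Lambda_n}$ (i.e.\ to $\Lambda_n$ together with the clocks/uniforms inside, as in the proofs of Propositions~\ref{prop:mix} and~\ref{prop:mix_fkg}) and on a decoupling event $\mathsf{dec}(\partial\Lambda_n)$ (Lemma~\ref{lem:mix_dec}) so that outside $\Lambda_n^{(1/4)}$ the dynamics in the annulus is, conditionally, a Glauber dynamics with some boundary condition on a circuit $\Gamma\subset\Lambda_n^{(\delta)}\setminus\Lambda_n$ of non-green sites, started from its invariant measure — exactly as argued in the proof of Proposition~\ref{prop:mix_fkg}. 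Inside that conditioned annular dynamics, the ``separating'' event at times $0$ and $t$ decomposes into four events, one per quadrant, each of which is increasing in the natural sign (positive arms want $+1$, negative arms want $-1$ — here one uses the reflection $\sigma\mapsto-\sigma$ on the negative sectors to make everything increasing, using that the Ising measure and the dynamics are symmetric under global spin flip). Hence the dynamical FKG inequality (Remark~\ref{rem:dyn_fkg}) lets one lower bound the joint probability by a product of four single-quadrant probabilities. Each single-quadrant dynamical probability $\Pro(\sigma,\sigma_t\in\{\text{constant-sign path landing in a window, avoiding forbidden rhombii}\})$ is then bounded below by a constant: either directly by finite-energy (Corollary~\ref{cor:ins}) applied site-by-site along a fixed reference path of bounded-in-$n$... no — the path has length $\asymp n$, so instead one bounds it below by the static box-crossing probability via the inequality $\Pro(\sigma,\sigma_t\in E)\ge\mu_n(E)^2$ (from the end of Section~\ref{ssec:generator}, reversibility), noting that the static separating path event has probability $\ge c>0$ by \eqref{eq:bxp}. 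The Cauchy--Schwarz trick (Lemma~\ref{lem:CS}) is used whenever we must glue a ``$+1$ at time $0$'' event to a ``$+1$ at time $t$'' event that are not literally the same static event, replacing $\Pro(\sigma\in A,\sigma_t\in B)$ by $\max\{\Pro(\sigma,\sigma_t\in A),\Pro(\sigma,\sigma_t\in B)\}$.

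I expect the main obstacle to be precisely the interaction between $-1$ paths at time $0$ and $+1$ paths at time $t$ alluded to in the text below Proposition~\ref{prop:sep}: the four arms at time $0$ and the four arms at time $t$ live in the same annulus and a priori cross each other, so one cannot simply run the static separation argument twice independently. The resolution is to run the gluing in the conditioned annular dynamics and rely on the dynamical FKG inequality \emph{after} a global-spin-flip normalization so that all eight arm-pieces become increasing events of the same pair — but checking that the relevant events really are (after flipping the two negative sectors) increasing functions of $(\sigma,\sigma_t)$ restricted to the annulus, and that the conditioning on $\Gamma$ and $(\sigma,\sigma_t)_{|\Gamma_{\mathrm{out}}}$ really reduces to a Glauber dynamics with a fixed boundary condition (so that Remark~\ref{rem:dyn_fkg} applies), requires care. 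A secondary technical point is the passage from scale $n$ to scale $4n$ in finitely many constant-cost steps without the constants degrading, and handling the boundary cases $m=1$ and small $n$ separately (for bounded $n$ the statement is immediate from Corollary~\ref{cor:ins} applied to every site of $\Lambda_{4n}$, as in the proof of Proposition~\ref{prop:mix}).
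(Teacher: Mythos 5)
Your proposal captures several genuine ingredients of the paper's proof (the Cauchy--Schwarz trick to synchronize the landing positions of arms at times $0$ and $t$, the use of decoupling events and conditioning on a circuit of non-green sites to reduce to a Glauber dynamics with fixed boundary condition, and the bound $\Pro(\sigma,\sigma_t\in E)\ge\mu_n(E)^2$ to import static box-crossing). However, there is a genuine gap in the overall structure: you reduce to a ``one-scale'' statement of the form ``given four dynamical arms crossing $\Lambda_n\setminus\Lambda_m$ in an \emph{arbitrary} configuration, extend them across $\Lambda_{4n}\setminus\Lambda_n$ to a separated configuration with constant multiplicative cost.'' That statement is false as stated, and this is precisely the difficulty that separation-of-arms arguments must overcome. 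If the arms arrive at $\partial\Lambda_n$ nearly merged (e.g.\ two arms landing within microscopic distance of one another), one cannot force them apart inside the annulus $\Lambda_{4n}\setminus\Lambda_n$ at bounded cost by box-crossing or finite-energy; the cost diverges as the arms are more and more entangled. What rescues the argument in the paper is a renormalization/bootstrap: one introduces the ``typical'' static event $\mathrm{Sep}_n^\delta$ (no three-arm configuration near any boundary point at scale $\delta n$), proves via a boundary $3$-arm estimate that $\mu_{2n}(\mathrm{Sep}_n^\delta)\ge 1-C\delta$ (Lemma~\ref{lem:3arm}), shows via the Cauchy--Schwarz trick that $\Pro(\sigma,\sigma_t\in A_4(m,n)\cap\mathrm{Sep}_n^\delta)\lesssim_\delta\pi^{\mathrm{sep}}_{m,4n}(t)$ (Lemma~\ref{lem:ext_sep}), and then closes the loop with the inequality
\begin{equation}
\pi_{m,n}(t)\le C_\delta\,\pi^{\mathrm{sep}}_{m,4n}(t)+C'\delta\,\pi_{m,n/4}(t),
\end{equation}
which is iterated over scales with $\delta$ chosen small. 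Without this iteration you have no way to handle the (small but positive probability) event that the arms are badly separated at $\partial\Lambda_n$.

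A second, more localized issue: you suggest making all eight arm-pieces increasing by ``reflecting $\sigma\mapsto-\sigma$ on the negative sectors'' before applying dynamical FKG. For the Ising model this does not work: flipping the spins only on a subset of $\Lambda_n$ changes the Hamiltonian (it introduces antiferromagnetic couplings across the boundary of the flipped region) and therefore changes the measure, so the flipped pair is no longer a Glauber dynamics started from its Ising equilibrium and Remark~\ref{rem:dyn_fkg} no longer applies. The paper instead applies Proposition~\ref{prop:mix_fkg} one rhombus at a time, choosing for each rhombus $\Lambda_{n/15}(x_k)$ a pair of events that are simultaneously increasing (if $k$ odd) or simultaneously decreasing (if $k$ even) in that rhombus; decreasing-decreasing correlation is legitimate because the Ising measure and the Glauber dynamics are invariant under the \emph{global} spin flip. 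The distinction matters and your version as written would not go through.
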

Roughly speaking, we use standard arguments  to deduce the proposition above  from  the box-crossing property and the spatial mixing and FKG results Propositions~\ref{prop:mix} and~\ref{prop:mix_fkg}.  The main new difficulty that was not present in previous approaches and that we had to overcome is of combinatorial nature. We are considering 4 arms at time 0 and at time $t$. When we localize the arms at time 0 and at time $t$, there is no reason a priori that the 4 arms land at the same places at both times, and some combinatorial obstructions may appear. The Cauchy--Schwarz trick (Lemma~\ref{lem:CS}) solves this issue and asserts that the most likely case is when the  arms land roughly at the same place at time $0$ and at time $t$. This is detailed in the proof of Lemma~\ref{lem:ext_sep}.
We decompose the proof into three lemmas (Lemmas \ref{lem:ext}, \ref{lem:3arm} and \ref{lem:ext_sep}). 
The first lemma is concerned with one of the main advantage of  separated arms: since the arms are at macroscopic distance of each other, we can use standard box-crossing constructions to extend them to a larger scale.
\begin{lemma}\label{lem:ext}
There exists $c>0$ such that, for every $n,m\ge 1$ that satisfy $n \ge (4m)\vee 100$ and any $t \in[0,\tau]$,
\[
\pi_{m,4n}^\mathrm{sep}(t) \ge c \pi_{m,n}^\mathrm{sep}(t).
\]
\end{lemma}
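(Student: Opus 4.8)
The plan is to run a standard box-crossing extension argument, but on the pair $(\sigma,\sigma_t)$ rather than on a single percolation configuration, using the dynamical FKG inequality and the spatial mixing / FKG results (Propositions \ref{prop:mix}, \ref{prop:mix_fkg}) in place of the classical independence and FKG of Bernoulli percolation. First I would observe that on the event $\{\sigma,\sigma_t\in A_4^{\mathrm{sep}}(m,n)\}$, for each $i$ we have an arm of the correct sign landing on $\Lambda_{n/20}(x_i)\cap\partial\Lambda_n$ and avoiding the three rhombii $\Lambda_{n/10}(x_j)$, $j\neq i$, \emph{both} at time $0$ and at time $t$. The goal is to prolong these four arms from $\partial\Lambda_n$ out to $\partial\Lambda_{4n}$, simultaneously at times $0$ and $t$, while keeping the separation and landing-window properties at scale $4n$, and to do so at a cost that is bounded below by a constant.

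The key step is to build, in each of the four ``corridors'' between $\partial\Lambda_n$ and $\partial\Lambda_{4n}$ near the side center $x_i$, a deterministic family of $O(1)$ rhombii/elongated rhombii (of aspect ratio bounded away from $0$ and $\infty$) whose crossings, of the appropriate sign, glue the existing arm at $\partial\Lambda_n$ to a well-separated arm landing on $\Lambda_{4n/20}(x_i)\cap\partial\Lambda_{4n}$. Concretely: use a crossing of a long thin rectangle pushing the arm radially outward, plus a crossing of a transversal rectangle to steer the landing point into the prescribed window, plus (for the sign-alternation and separation) crossings of additional rectangles that confine the four extended arms to disjoint macroscopic corridors away from the $\Lambda_{4n/10}(x_j)$'s. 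Each such crossing event is increasing (for $+1$ arms) or decreasing (for $-1$ arms); by the box-crossing property \eqref{eq:bxp} each has $\mu$-probability bounded below, and by the dynamical FKG inequality (Section \ref{ssec:dyn_fkg}) together with Remark \ref{rem:dyn_fkg} the corresponding events on the pair $(\sigma,\sigma_t)$ — i.e.\ ``this rectangle is crossed by $+1$ at time $0$ and at time $t$'' — also have probability bounded below (apply the one-sided bound $\Pro(\sigma,\sigma_t\in E)\ge \mu_n(E)^2$ from the end of Section \ref{ssec:generator} for a single increasing $E$, or handle $-1$ crossings by the reversed inequality). Since all these extension events, for the four sides, involve regions at macroscopic distance from the already-existing configuration inside $\Lambda_n$ and from each other, Proposition \ref{prop:mix} (spatial mixing for the pair) and Proposition \ref{prop:mix_fkg} (FKG-type correlation for events that are increasing only in the relevant region) let us intersect all of them with $\{\sigma,\sigma_t\in A_4^{\mathrm{sep}}(m,n)\}$ losing only a multiplicative constant; the intersection is contained in $\{\sigma,\sigma_t\in A_4^{\mathrm{sep}}(m,4n)\}$, which gives $\pi_{m,4n}^{\mathrm{sep}}(t)\ge c\,\pi_{m,n}^{\mathrm{sep}}(t)$.

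The main obstacle I expect is bookkeeping rather than conceptual: one must design the finite gluing picture so that (a) the $+1$ arms and the $-1$ arms do not interfere — in the static setting $+1$ and $-1$ crossings of disjoint rectangles are automatically compatible, but here one must make sure the chosen rectangles for opposite-sign arms are genuinely vertex-disjoint (they can be taken so, since the four corridors and the separating regions are at macroscopic distance); (b) the events used are monotone in exactly the set of coordinates where Proposition \ref{prop:mix_fkg} requires monotonicity, which forces some care in ordering the applications of spatial mixing versus FKG; and (c) when applying Proposition \ref{prop:mix} one must thicken rhombii and choose the aspect-ratio parameter $\delta>0$ once and for all so that the constants $c_\delta,C_\delta$ and $c_{\rho,\delta}$ from \eqref{eq:bxp} combine into a single constant $c>0$ independent of $n$ and $m$. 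None of these involves any new idea beyond what is already available in the excerpt; it is the standard ``extend separated arms by box crossings'' argument transported to the pair $(\sigma,\sigma_t)$.
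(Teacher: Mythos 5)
Your proposal follows essentially the same strategy as the paper: extend the four separated arms from $\partial\Lambda_n$ to $\partial\Lambda_{4n}$ by intersecting with box-crossing events of the appropriate signs, then use the box-crossing property \eqref{eq:bxp}, the one-sided bound $\Pro(\sigma,\sigma_t\in E)\ge\mu(E)^2$, and Propositions~\ref{prop:mix} and~\ref{prop:mix_fkg} to peel off the auxiliary events at bounded cost (the paper uses circuits in $\Lambda_{n/15}(x_i)\setminus\Lambda_{n/20}(x_i)$ as bridges plus long thin rhombii $R_i$ as corridors, peeling off the circuits one at a time via Proposition~\ref{prop:mix_fkg} and then splitting off the corridors via Proposition~\ref{prop:mix}).

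One concrete point you should not file under ``bookkeeping'': the quantities $\pi_{m,n}^{\mathrm{sep}}(t)$ and $\pi_{m,4n}^{\mathrm{sep}}(t)$ are defined with Glauber dynamics in \emph{different finite volumes} ($\Lambda_{2n}$ and $\Lambda_{8n}$ respectively). Running your extension argument for the pair $(\sigma,\sigma_t)$ with $\sigma\sim\mu_{8n}$ yields a lower bound on $\pi_{m,4n}^{\mathrm{sep}}(t)$ in terms of $\Pro(\sigma,\sigma_t\in A_4^{\mathrm{sep}}(m,n))$ under $\mu_{8n}$, not under $\mu_{2n}$; to pass from one to the other you genuinely need the quasi-invariance by translation for the pair (Proposition~\ref{prop:quasi-inv}, applied with $x=o$), which your plan does not invoke. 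With that addition the argument closes exactly as in the paper.
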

\begin{proof}
We let $x_1,\dots,x_4$ (resp.\ $y_1,\dots,y_4$) denote the centers of the left, bottom, right and top sides of $\Lambda_n$ (resp.\ $\Lambda_{4n}$). Let $R_1= \{k+\ell e^{i\pi/3},-4n\le k\le -n-n/20, -n/20\le \ell \le n/20\}$ be a long horizontal rhombus  that intersects the left boundary of $\Lambda_{4n}$ and is exactly adjacent to the rhombus $\Lambda_{n/20}(x_1)$. Similarly, for each $i\in \{2,3,4\}$, consider a rhombus $R_i$  that connects the rhombus $\Lambda_{n/20}(x_i)$ to the boundary of $\Lambda_{4n}$.  Since we will apply the spatial mixing property, we emphasize that those rhombii  are at macroscopic distance (roughly  $n/20$) from $\Lambda_n$. For every  $i\in \{1,\ldots,4\}$,  we also define two static events $A_i$ and $B_i$ as follows: (In these two events, the circuits/paths are made of $+1$ spins if $i$ is odd an $-1$ spins if $i$ is even)
\begin{itemize}[noitemsep]
\item We let $A_i$ denote the event that there is a circuit in $\Lambda_{n/15}(x_i) \setminus \Lambda_{n/20}(x_i)$ that surrounds the inner rhombus;
\item We  let $B_i$ be the event that there is a path included in $R_i$ from $\partial\Lambda_{n/20}(x_i)$ to $\Lambda_{(4n)/20}(y_i) \cap \partial\Lambda_{4n} $.
\end{itemize}
The events above are constructed in such  a way that the following inclusion of static events holds:
\[
A_4^{\textup{sep}}(m,n) \cap \bigg( \bigcap_{i=1}^4 A_i \cap B_i \bigg) \subseteq A_4^{\textup{sep}}(m,4n).
\]
Let $(\sigma_s)_{s\ge 0}$ denote the Glauber dynamics in $\Lambda_{8n}$ starting from $\sigma\sim\mu_{8n}$. By applying four times Proposition \ref{prop:mix_fkg} (with $\Lambda=\Lambda_{n/15}(x_i)$, $i=1,\dots,4$ and $\delta=1/3$),\footnote{Here, we use that, for every $k=1,\dots,4$, $A_4^{\textup{sep}}(m,n) \cap \bigcap_{i=1}^4 B_i \cap \bigcap_{i=k+1}^4 A_i$ and $A_k$ are both increasing (resp.\ decreasing) in $\Lambda_{n/10}(x_k)$ if $k$ is odd (resp.\ even).} we obtain that
\begin{align*}
\pi_{m,4n}^\mathrm{sep}(t)&=\Pro \big( \sigma,\sigma_t \in A_4^{\textup{sep}}(m,4n) \big)\\
& \ge \Pro \big( \sigma,\sigma_t \in A_4^{\textup{sep}}(m,n) \cap \bigcap_{i=1}^4 B_i \cap \bigcap_{i=1}^4 A_i \big)\\
& \gtrsim \Pro \big( \sigma,\sigma_t \in A_4^{\textup{sep}}(m,n) \cap \bigcap_{i=1}^4 B_i \cap \bigcap_{i=2}^4 A_i \big) \Pro \big( \sigma,\sigma_t \in A_1 \big)\\
& \gtrsim \dots\\
& \gtrsim \Pro \big( \sigma,\sigma_t \in A_4^{\textup{sep}}(m,n) \cap \bigcap_{i=1}^4 B_i \big) \prod_{i=1}^4 \Pro \big( \sigma,\sigma_t \in A_i \big).
\end{align*}
By the above and Proposition \ref{prop:mix} (applied to $\Lambda=\Lambda_n$), we have
\begin{align*}
\pi_{m,4n}^\mathrm{sep}(t)& \gtrsim \Pro \big( \sigma,\sigma_t \in A_4^{\textup{sep}}(m,n) \big) \Pro \big(\sigma,\sigma_t \in \bigcap_{i=1}^4 B_i \big) \prod_{i=1}^4 \Pro \big( \sigma,\sigma_t \in A_i \big)\\
&\gtrsim \Pro \big( \sigma,\sigma_t \in A_4^{\textup{sep}}(m,n) \big),
\end{align*}
where in the last inequality we have used the box-crossing property, together with the inequality $\Pro(\sigma,\sigma_t\in A)\ge \mu_{8n}(A)^2$ (see the remarks at the end of Section \ref{ssec:generator}). The proof is not totally over, because $(\sigma_s)_{s\ge 0}$ is the Glauber in dynamics in $\Lambda_{8n}$ and not in $\Lambda_{2n}$. But by Proposition \ref{prop:quasi-inv} (with $x=o$), the above is at least a constant times~$\pi_{m,n}^\mathrm{sep}(t)$. This ends the proof.
\end{proof}
The main idea behind the proof of the separation of arms is that two interfaces land typically at macroscopic distance of each other at the boundary of a box.  Technically, this is due to the fact that two  interfaces landing close to each others imply the existence of 3 boundary arms, which is unlikely to occur.   In order to control this quantitatively, we introduce below a typical event excluding 3-arms at the boundary of the box.    
\begin{definition}
Given $n\ge 100$ and $\delta\in(0,1/100)$ such that $n\ge 1/\delta$, we let $\textup{Sep}_n^\delta \subset \Omega_{2n}$ be the static event defined by the fact that there is no $x\in \partial\Lambda_n$ such that there exist three paths of alternating sign from $\partial \Lambda_{16\delta n}(x)$ to $\partial \Lambda_{n/2}$ that are included in $\Lambda_n$ (Remark: We can -- and we will -- see $\textup{Sep}_n^\delta$ as a subset of $\Omega_N$ for any $N\ge n$). 
\end{definition}
We will use two important properties  of  $\textup{Sep}_n^\delta$. First, Lemma~\ref{lem:3arm} asserts that it is a typical event: its probability is closed to $1$ provided the separation parameter $\delta$ is small enough. Second, its occurrence    forces arms to separate at scale $n$. This is formally expressed in Claim~\ref{cl:sep} below, and -- together with the Cauchy--Schwarz trick -- is the central argument in the proof of Lemma~\ref{lem:ext_sep}.
   
\begin{lemma}\label{lem:3arm}
There exists $C>0$ such that, for every $\delta\in(0,1/100)$ and $n \ge 1/\delta$,
\[
\mu_{2n}(\textup{Sep}_n^\delta)\ge 1-C \delta.
\]
\end{lemma}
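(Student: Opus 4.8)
The plan is to reduce the statement to the classical \emph{half-plane three-arm estimate} — that three arms of alternating colours joining scale $r$ to scale $R$ near a straight boundary occur with probability of order $(r/R)^2$ — and then to run a union bound over a net of boundary points of mesh comparable to $\delta n$.

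Concretely, I would first fix points $x_1,\dots,x_N\in\partial\Lambda_n$ with $N\asymp 1/\delta$ such that the rhombii $\Lambda_{8\delta n}(x_j)$ cover $\partial\Lambda_n$. If $(\textup{Sep}_n^\delta)^c$ holds, witnessed by some $x\in\partial\Lambda_n$, pick $j$ with $x-x_j\in\Lambda_{8\delta n}$; then $\Lambda_{16\delta n}(x)\subseteq\Lambda_{24\delta n}(x_j)$ (Minkowski sum of rhombii), and since $\delta<1/100$ the rhombus $\Lambda_{24\delta n}(x_j)$ is disjoint from $\Lambda_{n/2}$, so restricting the three witnessing paths to the complement of $\Lambda_{24\delta n}(x_j)$ yields three disjoint paths of alternating sign from $\partial\Lambda_{24\delta n}(x_j)$ to $\partial\Lambda_{n/2}$ inside $\Lambda_n$ (the displacement $x-x_j$ lives well inside the inner scale, so the angular alternation around $x_j$ is inherited from that around $x$). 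Hence
\[
\mu_{2n}\big((\textup{Sep}_n^\delta)^c\big)\le\sum_{j=1}^{N}\mu_{2n}\big(\text{$3$ paths, alternating sign, from $\partial\Lambda_{24\delta n}(x_j)$ to $\partial\Lambda_{n/2}$ in $\Lambda_n$}\big),
\]
a sum of $N\asymp 1/\delta$ terms.

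For each $j$, the event inside the sum is contained in a three-arm alternating event across a half-annulus of inner radius $\asymp\delta n$ and outer radius $\asymp n$ sitting against $\partial\Lambda_n$ near $x_j$ (when $x_j$ is near a vertex of the rhombus the admissible region is a wedge of angle $\le 2\pi/3<\pi$, so that case is dominated by the straight-boundary one after fitting the region into a half-plane annulus of comparable scales and using \eqref{eq:mon}). The core estimate is then that each such term is $\le C\delta^2$: this is the standard half-plane three-arm bound, classical for critical Bernoulli percolation (see e.g.\ \cite{Nol08,Wer07}), whose proof uses only the RSW-type crossing bounds \eqref{eq:bxp}, the FKG inequality \eqref{eq:fkg}, comparison between boundary conditions \eqref{eq:mon}, and the spatial mixing statement \eqref{eq:stat_mix_used}, all available here — and the boundary condition inherited from $\Lambda_{2n}$ is immaterial by \eqref{eq:mon} together with \eqref{eq:stat_mix_initial}. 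Summing the $N\asymp 1/\delta$ terms then gives $\mu_{2n}((\textup{Sep}_n^\delta)^c)\le C\delta$.

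The main obstacle is that one genuinely needs the \emph{sharp} exponent $2$ here: a soft argument — e.g.\ iterating the fact that a three-arm alternating crossing of a dyadic half-annulus has probability bounded away from $1$, combined with quasi-independence from spatial mixing — only yields $O((r/R)^\gamma)$ for some unspecified $\gamma>0$, which gives $\mu_{2n}((\textup{Sep}_n^\delta)^c)=o(1)$ but not the quantitative bound $O(\delta)$ needed to absorb the $N\asymp1/\delta$ terms. So the real work is importing (or re-deriving, in the present percolation setting of $\pm$ spin clusters at $\beta\le\beta_c$) this classical half-plane three-arm estimate with its optimal exponent; the remaining ingredients — the choice of net, the reduction of the corner case, and the control of boundary conditions — are routine.
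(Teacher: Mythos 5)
Your proposal is correct and matches the paper's proof essentially line for line: both fix a net $D\subset\partial\Lambda_n$ of $\asymp 1/\delta$ points, observe that a failure of $\textup{Sep}_n^\delta$ forces a half-plane three-arm event from scale $\asymp\delta n$ to scale $n/2$ based at one of the net points, apply the exponent-$2$ upper bound on the half-plane three-arm probability (which the paper proves in the appendix, Section~\ref{app:3arm}), and take a union bound. Your extra remarks — that the corner case is dominated by monotonicity, and that a soft exponent would not absorb the $1/\delta$ union-bound terms, so the sharp exponent is genuinely needed — are accurate and consistent with the paper's reliance on its appendix lemma.
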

\begin{proof}
Let $D\subset \partial\Lambda_n$  be a subset of size $\asymp \frac1\delta$  such that, each point  $x\in D$ is at (Euclidean) distance less than $16\delta n$ from the other points of $D$. We observe that if $\textup{Sep}_n^\delta$ does not hold then there exits $x \in D$ such that a $3$-arm event holds in a half-plane around $x$ from distance $32\delta n$ to distance $n/2$. More precisely, if $\textup{Sep}_n^\delta$ does not hold then there exists $x \in D$ such that there exist three arms of alternating color included in $\Lambda_n$ from $\partial \Lambda_{32\delta n}(x)$ to $\partial \Lambda_{n/2}$. By using that the exponent of the $3$-arm event is (at least) $2$ (see Section \ref{app:3arm} in the appendix), we obtain that -- for fixed $x$ -- the probability of this event is less than a constant times $\delta^2$. (Remark: here, we have also used quasi-invariance by translation for the static model, i.e.\ \eqref{eq:stat_mix_box}.) We obtain the desired result by applying the union bound.
\end{proof}
Let us state and prove the third lemma of this subsection.
\begin{lemma}\label{lem:ext_sep}
  For every $\delta\in(0,1/100)$ there exists $c_\delta>0$ such that for any $m,n\ge 1$ that satisfy $n\ge (4m)\vee (1/\delta)$ and any $t\in[0,\tau]$,
\[
\pi_{m,4n}^\mathrm{sep}(t) \ge c_\delta \Pro\big( \sigma,\sigma_t \in A_4(m,n)\cap \textup{Sep}_n^\delta\big),
\]
where $(\sigma_s)_{s \ge 0}$ is the Glauber dynamics in $\Lambda_{2n}$ starting from $\sigma\sim\mu_{2n}$. 
\end{lemma}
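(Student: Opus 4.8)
The plan is to dissolve the combinatorial obstruction flagged above by means of the Cauchy--Schwarz trick (Lemma~\ref{lem:CS}): after that reduction, the four arms land at the \emph{same} well-separated places on $\partial\Lambda_n$ at times $0$ and $t$, and one extends these separated arms from scale $n$ to scale $4n$ with box-crossing constructions, exactly along the lines of the proof of Lemma~\ref{lem:ext}. As in Lemma~\ref{lem:ext}, one works with the Glauber dynamics in a large box (say $\Lambda_{8n}$), the passage from $\Lambda_{2n}$ to $\Lambda_{8n}$ costing only a multiplicative constant by Proposition~\ref{prop:quasi-inv} applied with $x=o$.

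First, partition $\partial\Lambda_n$ into $\asymp 1/\delta$ arcs of length $\asymp\delta n$. By Claim~\ref{cl:sep} (the geometric content of $\textup{Sep}_n^\delta$), every configuration in $A_4(m,n)\cap\textup{Sep}_n^\delta$ admits a \emph{landing pattern}: a choice of four of these arcs, with prescribed signs and cyclic order, pairwise at distance $\ge\delta n$ (hence admitting four pairwise disjoint corridors to $\partial\Lambda_{4n}$), such that the configuration belongs to the event $E_D$ that there are four arms of the prescribed signs from $\partial\Lambda_m$ to $\partial\Lambda_n$ with the $j$-th arm landing in the $j$-th arc of $D$. Crucially, the set $\mathcal{D}_\delta$ of admissible patterns has cardinality bounded in terms of $\delta$ only. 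Decomposing over the landing patterns of $\sigma$ and of $\sigma_t$ and then applying Lemma~\ref{lem:CS},
\[
\Pro\big(\sigma,\sigma_t\in A_4(m,n)\cap\textup{Sep}_n^\delta\big)\le \sum_{D,D'\in\mathcal{D}_\delta}\Pro\big(\sigma\in E_D,\ \sigma_t\in E_{D'}\big)\le |\mathcal{D}_\delta|^2\max_{D\in\mathcal{D}_\delta}\Pro\big(\sigma,\sigma_t\in E_D\big).
\]
After this step the two time-coordinates use the \emph{same} pattern, so it only remains to bound $\Pro(\sigma,\sigma_t\in E_D)\lesssim_\delta\pi_{m,4n}^{\mathrm{sep}}(t)$ for each fixed $D$.

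Fix such a $D$. Since its four arcs appear in the correct cyclic order and are pairwise $\ge\delta n$ apart, one builds in $\Lambda_{4n}\setminus\Lambda_n$ four pairwise disjoint extension events $\mathcal{C}_1,\dots,\mathcal{C}_4$: for each $j$, $\mathcal{C}_j$ is increasing (if $j$ is odd) or decreasing (if $j$ is even), depends only on spins in a region $T_j\subset\Lambda_{4n}\setminus\Lambda_n$, and consists of a short ``collector'' construction near the $j$-th arc (inside a rhombus of size $\asymp\delta n$) followed by a path routed through a corridor towards the side-center $x_j$ of $\Lambda_{4n}$ while avoiding the three rhombii $\Lambda_{4n/10}(x_i)$, $i\neq j$, so that $E_D\cap\bigcap_{j=1}^4\mathcal{C}_j\subseteq A_4^{\textup{sep}}(m,4n)$. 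One then peels off the $\mathcal{C}_j$ one at a time exactly as in Lemma~\ref{lem:ext}: the collector parts are handled with Proposition~\ref{prop:mix_fkg} (valid because, near $T_j$, both $\mathcal{C}_j$ and the remaining event have the common monotonicity dictated by the parity of $j$ — the remaining event not depending on $T_j$ at all), the corridor parts are decoupled from the scale-$n$ arm event via the spatial mixing Proposition~\ref{prop:mix} (applied with $\Lambda=\Lambda_n$), and each factor produced is $\gtrsim_\delta 1$ by the box-crossing property together with $\Pro(\sigma,\sigma_t\in A)\ge\mu_{8n}(A)^2$. This gives $\Pro(\sigma,\sigma_t\in E_D\cap\bigcap_j\mathcal{C}_j)\gtrsim_\delta\Pro(\sigma,\sigma_t\in E_D)$, hence $\pi_{m,4n}^{\mathrm{sep}}(t)\gtrsim_\delta\Pro(\sigma,\sigma_t\in E_D)$; combined with the displayed inequality this proves the lemma. (The decreasing cases $j$ even use the global spin-flip symmetry of the Ising model.)

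The probabilistic input is thus entirely standard (Lemma~\ref{lem:CS}, the box-crossing property, and Propositions~\ref{prop:mix} and~\ref{prop:mix_fkg}); the main obstacle is the geometric bookkeeping — extracting from $\textup{Sep}_n^\delta$ the bounded family $\mathcal{D}_\delta$ of separated landing patterns (Claim~\ref{cl:sep}) and routing the four disjoint extension corridors in $\Lambda_{4n}\setminus\Lambda_n$ — together with the correct use of Lemma~\ref{lem:CS} to reduce to a single pattern used at both times.
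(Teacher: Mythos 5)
Your proposal follows the same route as the paper: reduce to a bounded family of well-separated landing patterns via Claim~\ref{cl:sep}, force both times to use the \emph{same} pattern with the Cauchy--Schwarz trick (Lemma~\ref{lem:CS}), then extend the separated arms from scale $n$ to $4n$ with corridors and the box-crossing/mixing/FKG machinery of Lemma~\ref{lem:ext}, passing to $\Lambda_{8n}$ via Proposition~\ref{prop:quasi-inv}. This matches the paper's argument step for step, and the Cauchy--Schwarz reduction is applied in the same way.

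One detail in your extension step is over-simplified, and as written it would not go through. You describe $E_D$ as merely requiring the four arms to land in the four prescribed arcs, and you assert that the extension events $\mathcal{C}_j$ depend only on spins in regions $T_j\subset\Lambda_{4n}\setminus\Lambda_n$, so that the ``remaining event'' does not depend on $T_j$ at all. But a collector that can catch arm $j$ regardless of where its endpoint sits inside the $j$-th arc must be a circuit that straddles $\partial\Lambda_n$ (as in Lemma~\ref{lem:ext}, where the circuit annulus $\Lambda_{n/15}(x_i)\setminus\Lambda_{n/20}(x_i)$ is centered on $\partial\Lambda_n$); a one-sided collector living entirely outside $\Lambda_n$ is not guaranteed to be adjacent to every possible endpoint with probability bounded below. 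Once the collectors straddle $\partial\Lambda_n$, the peeling-off with Proposition~\ref{prop:mix_fkg} requires the remaining event to be increasing (resp.\ decreasing) \emph{in} the collector region, which in turn requires the arms of $E_D$ to \emph{avoid} the rhombi around the other three landing points — this is exactly the extra avoidance built into $A_4^{\delta,X}(m,n)$, and it is precisely what Claim~\ref{cl:sep} supplies. So you should replace your $E_D$ by the paper's $A_4^{\delta,X}(m,n)$ (landing \emph{and} avoidance), and accept that the $\mathcal{C}_j$ live partly inside $\Lambda_n$; then the monotonicity needed for Proposition~\ref{prop:mix_fkg} comes from the avoidance rather than from disjointness of supports. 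With this correction your argument coincides with the paper's.
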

In order to prove this lemma, we define the $\delta$-well-separated events $A_4^{\delta,X}(m,n)$.
\begin{definition}
Let $\delta \in (0,1/100)$ and $n \ge 1/\delta$. We let $D_n^\delta \subseteq \partial \Lambda_n$ be a set of points roughly equally spaced, of cardinality $\asymp 1/\delta$. More precisely, we just ask that each $x\in D_n^\delta$ is at distance between $\delta n$ and $2\delta n$ from $D_n^\delta \setminus \{x\}$. Moreover,
\begin{itemize}
\item If $X=(x_1,\dots,x_4) \in (D_n^\delta)^4$, we say that $X$ is $(\delta,n)$-well-separated if $x_1,\dots,x_4$ are in counterclockwise order and $\Lambda_{4\delta n}(x_i)\cap \Lambda_{4\delta n}(x_j)=\emptyset$ for every $i \neq j$;
\item We define the (static) events $A_4^{\delta,X}(m,n)\subset A_4(m,n)$ as follows. Let $m\ge 1$ such that $n\ge (4m)\vee(1/\delta)$ and let $X=(x_1,\dots,x_4)\in(D_n^\delta)^4$ be a $(\delta,n)$-well-separated tuple. We say that $A_4^{\delta,X}(m,n)$ holds if for every $i \in \{1,\dots,4\}$ odd (resp.\ even) there exists a path from $\partial \Lambda_m$ to $\partial \Lambda_n$ that is included in $\Lambda_n \setminus (\cup_{j\neq i}\Lambda_{4\delta n}(x_j) \cup \Lambda_{m-1})$, that is only made of $+1$ (resp.\ $-1$) spins, and whose endpoint belongs to $\Lambda_{2\delta n}(x_i)\cap\partial \Lambda_n$.
\end{itemize}
\end{definition}
\begin{proof}[Proof of Lemma \ref{lem:ext_sep}]
We start with the following static and deterministic claim. 
\begin{claim}\label{cl:sep}
Let $\delta\in(0,1/100)$ and $m,n\ge 1$ such that $n\ge (4m)\vee (1/\delta)$. Then,
\[
A_4(m,n)\cap \textup{Sep}_n^\delta \subseteq A_4^\delta(m,n):=\cup_X A_4^{\delta,X}(m,n),
\]
where the union is on all $X\in(D_n^\delta)^4$ that are $(\delta,n)$-well-separated.
\end{claim}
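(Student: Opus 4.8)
The plan is to prove Claim~\ref{cl:sep} as a purely deterministic statement about spin configurations: assuming a configuration belongs to $A_4(m,n)\cap\textup{Sep}_n^\delta$, I must produce a $(\delta,n)$-well-separated tuple $X=(x_1,\dots,x_4)\in(D_n^\delta)^4$ witnessing $A_4^{\delta,X}(m,n)$. First I would fix a configuration in $A_4(m,n)$ and choose four arms $\gamma_1,\dots,\gamma_4$ of alternating sign from $\partial\Lambda_m$ to $\partial\Lambda_n$ realizing the event; being on $A_4(m,n)$ these can be taken inside the annulus $\Lambda_n\setminus\Lambda_{m-1}$ and, by standard surgery, pairwise disjoint and ordered counterclockwise around the origin. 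For each $i$, let $z_i\in\partial\Lambda_n$ be the endpoint of $\gamma_i$ on the outer boundary, and let $x_i$ be a closest point of $D_n^\delta$ to $z_i$, so that $|x_i-z_i|\le 2\delta n$ (using the spacing property of $D_n^\delta$). I would then truncate each $\gamma_i$ at the first time it reaches $\partial\Lambda_{n/2}$ going outward (or rather use the portion from $\partial\Lambda_m$ to $\partial\Lambda_{n/2}$) and separately the outer portion from $\partial\Lambda_{n/2}$ to $z_i\in\Lambda_{2\delta n}(x_i)\cap\partial\Lambda_n$; the point is to verify the landing condition ``endpoint in $\Lambda_{2\delta n}(x_i)\cap\partial\Lambda_n$'' is immediate from the choice of $x_i$.

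The crux is to check the two remaining requirements in the definition of $A_4^{\delta,X}(m,n)$: that $X$ is $(\delta,n)$-well-separated, i.e.\ the rhombii $\Lambda_{4\delta n}(x_i)$ are pairwise disjoint, and that arm $i$ can be chosen to avoid $\bigcup_{j\ne i}\Lambda_{4\delta n}(x_j)$. Both of these follow from $\textup{Sep}_n^\delta$. Suppose two of the landing points, say $z_i$ and $z_j$, were within Euclidean distance $16\delta n$ of each other on $\partial\Lambda_n$; then the two arms $\gamma_i,\gamma_j$ (which have different signs since non-adjacent arms in a $4$-arm alternating configuration around the origin differ, and even adjacent ones differ) together with the boundary segment between $z_i$ and $z_j$ would produce three arms of alternating sign emanating from a neighborhood $\partial\Lambda_{16\delta n}(x)$ of some $x\in\partial\Lambda_n$ and reaching $\partial\Lambda_{n/2}$ inside $\Lambda_n$ --- which is exactly what $\textup{Sep}_n^\delta$ forbids. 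Hence all four landing points are pairwise at distance $>16\delta n$, so their nearest neighbors $x_i\in D_n^\delta$ are pairwise at distance $>16\delta n-4\delta n=12\delta n$, giving $\Lambda_{4\delta n}(x_i)\cap\Lambda_{4\delta n}(x_j)=\emptyset$; this is the well-separation. For the avoidance condition, I would argue that the arm $\gamma_i$, near the outer boundary, stays inside the ``corridor'' leading to $z_i$ and in particular does not enter $\Lambda_{4\delta n}(x_j)$ for $j\ne i$: if it did, one could again extract a forbidden triple of boundary $3$-arms near $x_j$, or one reroutes $\gamma_i$ slightly (using that in the inner region $\Lambda_{n/2}\setminus\Lambda_m$ the arms are already disjoint, and in the outer annulus one only needs to reach the correct $2\delta n$-neighborhood). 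Some care is needed to fix the outer portion: I would replace the outer part of each $\gamma_i$ by a path that, once it reaches $\partial\Lambda_{n/2}$, is pushed out to $z_i$ staying within distance $O(\delta n)$ of the straight radial direction toward $z_i$, and then invoke $\textup{Sep}_n^\delta$ to rule out the bad crossings.

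The main obstacle I anticipate is the geometric bookkeeping in the outer annulus $\Lambda_n\setminus\Lambda_{n/2}$: one needs the four arms not merely to land near distinct points $x_i$, but to do so while avoiding the $4\delta n$-rhombii around the other three $x_j$'s, and making sure the constants ($16\delta n$ in $\textup{Sep}_n^\delta$ versus $2\delta n,4\delta n$ in the well-separated events) line up so that the contradiction via a boundary $3$-arm is genuine. This is why the definition uses $16\delta n$: it leaves enough slack that, whenever a fourth arm strand comes close to an already-accounted landing region, one can encircle a $\partial\Lambda_{16\delta n}(x)$ neighborhood and extract three alternating arms to $\partial\Lambda_{n/2}$. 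I expect the write-up to proceed by: (i) cleaning up the arms to be disjoint in $\Lambda_{n/2}\setminus\Lambda_{m-1}$; (ii) defining $x_i$ as nearest $D_n^\delta$-point to the $\partial\Lambda_{n/2}$-crossing location (rather than the $\partial\Lambda_n$-endpoint, to keep the inner part untouched) and re-extending outward using a deterministic box-crossing-free radial corridor; (iii) checking well-separation and avoidance by contradiction with $\textup{Sep}_n^\delta$ as above; (iv) concluding $A_4^{\delta,X}(m,n)$ holds for this $X$. The rest of Lemma~\ref{lem:ext_sep} then combines the claim with a union bound over the $\asymp\delta^{-4}$ tuples $X$, the Cauchy--Schwarz trick (Lemma~\ref{lem:CS}) to reduce $\Pro(\sigma\in A_4^{\delta,X}, \sigma_t\in A_4^{\delta,X'})$-type terms to the diagonal $X=X'$, and Lemma~\ref{lem:ext} together with Propositions~\ref{prop:mix} and~\ref{prop:mix_fkg} to upgrade a well-separated $4$-arm configuration to a fully separated one at scale $4n$.
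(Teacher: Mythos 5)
Your overall plan — show the four arm endpoints must be pairwise far apart because a close pair would violate $\textup{Sep}_n^\delta$, then snap to the nearest points of $D_n^\delta$ — is the right high-level idea, but the execution has two genuine problems, and it misses the one trick that makes the paper's proof short.

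First, a factual slip: in a four-arm alternating configuration, \emph{non}-adjacent arms ($\gamma_1,\gamma_3$, or $\gamma_2,\gamma_4$) have the \emph{same} sign, not different signs as your parenthetical claims. This matters: if the two $+$ spin arms $\gamma_1$ and $\gamma_3$ both wiggle into a small neighbourhood near the boundary, you do \emph{not} automatically get three alternating arms there, so $\textup{Sep}_n^\delta$ gives you nothing directly, and your contradiction for the ``avoidance'' requirement breaks down precisely for same-sign pairs. Second, the ``reroute $\gamma_i$ slightly'' / ``push out along a radial corridor'' step has no meaning here: Claim~\ref{cl:sep} is a deterministic inclusion of static events, so the arms are whatever the fixed configuration $\eta$ contains; you may select among existing monochromatic paths but you cannot steer one. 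The obstacle you flag in your last paragraph (``the main obstacle I anticipate is the geometric bookkeeping in the outer annulus'') is real and your sketch does not resolve it.

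The paper sidesteps both issues by working with \emph{interfaces} rather than spin arms: on $A_4(m,n)$ there are at least four pairwise disjoint interfaces (dual hexagonal paths with $+$ on one side, $-$ on the other) crossing $\Lambda_n\setminus\Lambda_m$; one picks four consecutive ones $\gamma_1,\dots,\gamma_4$, lets $z_i\in\partial\Lambda_n$ be adjacent to the endpoint of $\gamma_i$ with signs $+,-,+,-$, and observes that if $\gamma_i$ entered $\Lambda_{8\delta n}(z_j)$ for some $j\neq i$, then the two disjoint interfaces $\gamma_i,\gamma_j$ both run from near $z_j$ out past $\partial\Lambda_{n/2}$, and the three regions they carve out carry constant-sign arms of alternating sign — a $3$-arm event at $z_j$ forbidden by $\textup{Sep}_n^\delta$. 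Because interfaces are disjoint and each carries an alternating pair of arms on its two sides, this single observation delivers both well-separation of the landing points \emph{and} the avoidance condition in one stroke, with no rerouting and no case analysis on signs. Setting $x_i\in D_n^\delta$ closest to $z_i$ then finishes as you intended. I'd suggest rewriting your argument in terms of interfaces; with spin arms alone the avoidance condition does not follow from $\textup{Sep}_n^\delta$.
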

Before proving the claim, let us prove the lemma. Let $(\sigma_s)_{s \ge 0}$ be the Glauber dynamics in $\Lambda_{2n}$ starting from $\sigma\sim\mu_{2n}$. Let $X\in(D_n^\delta)^4$ be the $(\delta,n)$-well-separated tuple that maximizes $W\mapsto\Pro(\sigma_,\sigma_t \in A_4^{\delta,W}(m,n))$. By the `Cauchy--Schwarz trick' Lemma \ref{lem:CS},
\[
\Pro \big( \sigma,\sigma_t \in A_4^{\delta,X}(m,n) \big) \ge \frac{1}{N_{\delta,n}} \sum_{W,Z} \Pro \big( \sigma \in A_4^{\delta,W}(m,n), \sigma_t \in A_4^{\delta,Z}(m,n)  \big),
\]
where the sum is on every $(\delta,n)$-well-separated tuples $W,Z\in(D_n^\delta)^4$ and $N_{\delta,n}\asymp1/\delta^8$ is the number of possible choices for $(W,Z)$. As a result (recall the definition of $A_4^\delta(m,n)$ in the statement of the claim),
\[
\Pro \big( \sigma,\sigma_t \in A_4^{\delta,X}(m,n) \big) \gtrsim\delta^8 \Pro\big(\sigma,\sigma_t \in A_4^\delta(m,n) \big) \overset{\textup{Claim \ref{cl:sep}}}{\ge} \delta^8 \Pro\big( \sigma,\sigma_t \in
A_4(m,n)\cap \textup{Sep}_n^\delta \big).
\]
Let $(\sigma_s')_{s\ge 0}$ be the Glauber dynamics in $\Lambda_{8n}$ starting from $\sigma'\sim\mu_{8n}$. By the above (and Proposition \ref{prop:quasi-inv} with $x=o$), it is sufficient to prove that
\[
\Pro\big(\sigma',\sigma_t'\in A_4^{\textup{sep}}(m,4n)\big) \ge c_\delta\Pro \big( \sigma',\sigma_t' \in A_4^{\delta,X}(m,n) \big).
\]
The proof of the latter is similar to the one of Lemma \ref{lem:ext} where we used four rhombi to extend four separated arms at scale $n$ to scale $4n$. Here we can repeat the argument. The only difference is that one needs to replace the four rhombi by four well-chosen disjoint `corridors' that connect the four boxes associated to $X$ at scale $n$ to the four centered boxes (as in the definition of $A_4^{\textup{sep}}$) at scale $n$. We omit the precise details of this construction, which is standard in separation arguments.

\medskip

Let us finish the proof of the lemma by showing the claim. To this purpose, we are going to consider the interfaces that cross $\Lambda_n \setminus \Lambda_m$. An interface is a continuous path in $\R^2$ that is included on the union of edges of the hexagonal lattice (which is dual to the triangular lattice) and that has only $+1$ sites on one of its side and only $-1$ sites on the other side. If $A_4(m,n)$ holds, then there are at least four interfaces that are disjoint (in the sense that the continuous paths are disjoint) and that cross this annulus. Assume that $A_4(m,n)$ holds, let $\gamma_1,\dots,\gamma_4$ be four consecutive interfaces and let $z_1,\dots,z_4\in\partial \Lambda_n$ be four vertices of the triangular lattice that are adjacent to the endpoints of $\gamma_1,\dots,\gamma_4$ respectively and whose signs are $+,-,+,-$ respectively.
We observe that if $\textup{Sep}^\delta_n$ holds, then for every $i\in\{1\dots,4\}$, $\gamma_i \cap \big(\cup_{j\neq i}\Lambda_{8\delta n}(z_j)\big)=\emptyset$ (by this, we mean that the set of sites adjacent to $\gamma_i$ does not intersect $\cup_{j\neq i}\Lambda_{8\delta n}(z_j)$). One concludes the proof of the claim by letting $x_i$ be the point of $D_n^\delta$ that is closest to $z_i$ and noting that $A_4^{\delta,X}(m,n)$ holds, where $X=(x_1,\dots,x_4)$. 
\end{proof}
We can now prove the arm separation property.
\begin{proof}[Proof of Proposition \ref{prop:sep}] Let $\delta\in(0,1/100)$ (that will be  chosen small enough at the  end of the proof) and let $m,n\ge 1$ such that $n \ge (4m) \vee (1/\delta)$. Let $(\sigma_s)_{s\ge 0}$ denote the Glauber dynamics in $\Lambda_{2n}$ starting from $\sigma\sim\mu_{2n}$. Either the separation event holds both at time $0$ and time $t$, or it must fail at time $0$ or at time $t$. By using the union bound and reversibility  of the Glauber dynamics, we get
\begin{align*}
\pi_{m,n}(t) & \le \Pro \big( \sigma,\sigma_t \in A_4(m,n) \cap \textup{Sep}_n^{\delta} \big) + 2\Pro \big( \sigma,\sigma_t \in A_4(m,n), \sigma\notin\textup{Sep}_n^\delta \big).\\
 & \le \Pro \big( \sigma,\sigma_t \in A_4(m,n) \cap \textup{Sep}_n^{\delta} \big) + 2\Pro \big( \sigma,\sigma_t \in A_4(m,n/4), \sigma\notin\textup{Sep}_n^\delta \big).
\end{align*}
By Lemma~\ref{lem:ext_sep}, the first term of the last line above is  at most $C_\delta\pi_{m,4n}^\mathrm{sep}(t)$, where $C_\delta$ is a constant depending on $\delta$ only. By the spatial mixing property Proposition \ref{prop:mixbis} and Lemma \ref{lem:3arm}, the second term is at most $ C'\delta\pi_{m,n/4}(t)$ for some constant $C'>0$. We obtain
  \begin{equation}
    \label{eq:5}
    \pi_{m,n}(t) \le C_\delta\pi_{m,4n}^\mathrm{sep}(t) + C'\delta \pi_{m,n/4}(t),
  \end{equation}
Writing $r_{m,n}=\pi_{m,n}(t)/\pi_{m,4n}^\mathrm{sep}(t)$,  the displayed equation  above and Lemma \ref{lem:ext} gives
\[
r_{m,n}\le C_\delta+C''\delta r_{m,n/4}
\]
for every $m\ge 1$ and $n\ge (4m) \vee (1/\delta)$ and for some constant $C''>0$. We choose $\delta$  small enough so that $C''\delta \le 1/2$. A direct induction implies that for every $n,m\ge 1$ such that $n \ge (4m)\vee(1/\delta)$, we have
\begin{align*}
r_{m,n}\le 2C_\delta + \max_{k\ge (4m)\vee(1/\delta)} r_{m,k} & \le 2C_\delta + \max_{k\ge (4m)\vee(1/\delta)} \frac{1}{\pi_{m,4k}^\mathrm{sep}(t)}\\
& \le 2C_\delta + \max_{k\ge (4m)\vee(1/\delta)} \frac{1}{\mu_{8k}\big(A_4^\mathrm{sep}(m,4k)\big)^2}.
\end{align*}
The above is less than some constant (thanks to the box-crossing property \eqref{eq:bxp}), which gives the desired result when $n\ge 4m\vee(1/\delta)$. The extension to $ n\ge 4m\vee 100$ follows by possibly adjusting the constant $c>0$.
\end{proof}
We end this section by noting that, thanks to the arm separation property Proposition \ref{prop:sep} (and by reasoning as in the proof of Lemma \ref{lem:ext}), we have the following result.
\begin{corollary}\label{cor:4arm}
There exists $c>0$ such that, for every $n \ge 1$ and every $x \in \Lambda_n$ that is at distance $\ge n/10$ from the sides of $\Lambda_n$, we have
\[
\Pro\big(\sigma,\sigma_t \in \mathsf{Piv}_x( \mathsf{Cross}_n) \big) \ge c\pi_n(t),
\]
where $(\sigma_s)_{s\ge 0}$ is the Glauber dynamics in $\Lambda_{2n}$ starting from $\mu_{2n}$.
\end{corollary}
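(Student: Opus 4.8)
\textbf{Proof proposal for Corollary~\ref{cor:4arm}.} The plan is to recognise $\mathsf{Piv}_x(\mathsf{Cross}_n)$ as a four‑arm event around $x$ with prescribed colours and landing sectors, to produce such an event at a macroscopic scale around $x$ by combining the separation‑of‑arms property with a standard gluing construction, and finally to transfer the resulting four‑arm probability at scale $\asymp n$ back to $\pi_n(t)$ via quasi‑invariance and quasi‑multiplicativity.

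\emph{Step 1: topology of pivotality.} On the triangular lattice, a site $x$ is pivotal for $\mathsf{Cross}_n$ in a configuration $\eta$ if and only if there are four paths emanating from the neighbours of $x$, contained in $\Lambda_n$, alternating in sign in cyclic order around $x$, such that the two $+1$ paths reach the left and right sides of $\Lambda_n$ while the two $-1$ paths reach the bottom and top sides. Indeed, with $\eta(x)=+1$ the two $+1$ arms join into a left–right $+1$ crossing, while with $\eta(x)=-1$ the two $-1$ arms together with $x$ form a $-1$ path from the bottom side to the top side of $\Lambda_n$, which by planar duality for site percolation on $\T$ blocks every $+1$ left–right crossing. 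Hence it suffices to exhibit, with probability $\gtrsim \pi_n(t)$, this coloured, sector‑constrained four‑arm configuration simultaneously for $\sigma$ and $\sigma_t$.

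\emph{Step 2: construction at scale $n/20$.} We may assume $n$ large. Since $x$ is at distance $\ge n/10$ from the sides of $\Lambda_n$, the rhombus $\Lambda_{n/20}(x)$ is contained in $\Lambda_n$ with its sides at distance $\ge n/20$ from $\partial\Lambda_n$. By the separation of arms (Proposition~\ref{prop:sep} applied with $m=1$) together with quasi‑invariance by translation (Proposition~\ref{prop:quasi-inv}), the probability that $(\sigma,\sigma_t)$ realises the separated four‑arm event $A_4^{\mathrm{sep}}(1,n/20)$ \emph{centred at $x$} — where, after identifying the centres of the four sides of $\Lambda_{n/20}(x)$, the odd (resp.\ even) arm is $+1$ (resp.\ $-1$) and lands in the central portion of its side — is at least $c\,\pi_{n/20}(t)$. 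On this event we glue, exactly as in the proof of Lemma~\ref{lem:ext}: four circuits of the appropriate sign in thin annuli around boxes of scale $\asymp n/20$ at the four side‑centres, to stabilise the landing points, and four \emph{pairwise disjoint corridors} in $\Lambda_n\setminus\Lambda_{n/20}(x)$ — one going left and one right carrying a $+1$ crossing to the left, resp.\ right, side of $\Lambda_n$, one going down and one up carrying a $-1$ crossing to the bottom, resp.\ top, side of $\Lambda_n$; these four corridors emanate in four different directions, have bounded aspect ratio, and can be taken disjoint. The separation built into $A_4^{\mathrm{sep}}$ and the disjointness of the corridors guarantee that the four extended arms stay disjoint and correctly coloured, so the resulting event is contained in $\mathsf{Piv}_x(\mathsf{Cross}_n)$.

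\emph{Step 3: probability estimate.} We bound $\Pro(\sigma,\sigma_t\in\mathsf{Piv}_x(\mathsf{Cross}_n))$ from below by the probability of the intersection of the separated four‑arm event with the circuit and corridor events, and we peel off the circuit and corridor events one at a time using the dynamical FKG‑type estimate Proposition~\ref{prop:mix_fkg} and the spatial‑mixing estimate Proposition~\ref{prop:mix}, precisely as in Lemma~\ref{lem:ext}. Each circuit/corridor event has probability bounded below by a constant because of the box‑crossing property \eqref{eq:bxp} and the inequality $\Pro(\sigma,\sigma_t\in A)\ge\mu_{2n}(A)^2$ from Section~\ref{ssec:generator}. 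This gives $\Pro(\sigma,\sigma_t\in\mathsf{Piv}_x(\mathsf{Cross}_n))\gtrsim \pi_{1,n/20}^{\mathrm{sep}}(t)\gtrsim \pi_{n/20}(t)$. Finally, $\pi_{n/20}(t)\asymp\pi_n(t)$: by quasi‑multiplicativity (Proposition~\ref{prop:quasi_multi}), $\pi_n(t)\asymp\pi_{n/20}(t)\,\pi_{n/20,n}(t)$, and $\pi_{n/20,n}(t)\gtrsim 1$ because $A_4(n/20,n)$ contains an intersection of $O(1)$ box‑crossings of bounded aspect ratio of $\mu_{2n}$‑probability bounded below, so again $\Pro(\sigma,\sigma_t\in A_4(n/20,n))\ge\mu_{2n}(A_4(n/20,n))^2\gtrsim1$. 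The case of small $n$ is handled by adjusting the constant.

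\emph{Main obstacle.} The substantive point is Step~1 — correctly characterising $\mathsf{Piv}_x(\mathsf{Cross}_n)$ by the coloured, sectored four‑arm event and checking, via planar duality, that the glued configuration indeed forces pivotality — together with the (standard but fiddly) verification in Step~2 that the separation encoded in $A_4^{\mathrm{sep}}$ really does allow the four arms to be extended to the four sides of $\Lambda_n$ through pairwise‑disjoint corridors without colour clashes. Once the geometry is in place, Step~3 is a routine repetition of the decoupling estimates already used in the proof of Lemma~\ref{lem:ext}.
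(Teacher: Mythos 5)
Your proposal is correct and follows exactly the route the paper intends: the paper's ``proof'' of this corollary is a single sentence pointing to the arm separation property (Proposition~\ref{prop:sep}) and ``reasoning as in the proof of Lemma~\ref{lem:ext}'', and your Steps~2 and~3 are precisely a careful spelling-out of that hint (separate the arms at a small scale around $x$ via Proposition~\ref{prop:sep} and quasi-invariance Proposition~\ref{prop:quasi-inv}, extend them through four disjoint corridors to the sides of $\Lambda_n$ peeling off events with Propositions~\ref{prop:mix_fkg} and~\ref{prop:mix}, and finish with quasi-multiplicativity and box-crossing). Your Step~1, identifying $\mathsf{Piv}_x(\mathsf{Cross}_n)$ with the coloured, sectored four-arm event via self-duality of site percolation on $\T$, is the implicit geometric ingredient the paper takes for granted, and you state it correctly.
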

\subsubsection{Proof of the quasi-multiplicativity property}
Before writing the proof, let us make three observations:
\begin{itemize}[noitemsep]
\item[(i)] Let $A_4^\mathrm{sep,in}(m,n)$ be an analogue of $A_4^\mathrm{sep}(m,n)$ but with arms well-separated at the inner scale $m$ (which is well defined for $n\ge 4m$ and $m \ge 100$, say). By the same proof as for Proposition~\ref{prop:sep},\footnote{Actually, there is one difference with the proof of Proposition \ref{prop:sep}. More precisely, for well-separation at the inner-scale, the $3$-arm event in the half-plane is not always sufficient to prove the analogue of Lemma~\ref{lem:3arm} because there is more room close to the corners. But by using the $3$-arm event in the half plane each time it is possible and using that the $1$-arm probability decays polynomially fast, one can obtain Lemma~\ref{lem:3arm} with $1-C\delta^c$ rather than $1-C\delta$, which is sufficient for our purpose.}, for every $n,m \ge 100$ that satisfy $n\ge 4m$ and for every $t\in[0,\tau]$, we have
\[
\pi^\mathrm{sep,in}_{m/4,n}(t) :=\Pro \big( \sigma,\sigma_t \in A_4^{\textup{sep,in}}(m/4,n) \big) \gtrsim \pi_{m,n}(t),
\]
where $(\sigma_s)_{s\ge 0}$ is the Glauber dynamics in $\Lambda_{2n}$ starting from $\sigma\sim\mu_{2n}$;
\item[(ii)] Moreover, for instance by finite-energy (Corollary \ref{cor:ins}), we have
\[
\pi_n(t) \gtrsim \pi^\mathrm{sep,in}_{100,n}(t)
\]
for every $n\ge 400$ and $t\in[0,\tau]$;
\item[(iii)] By the two points above and Proposition \ref{prop:sep}, we have $\pi_{m/4,4n}(t)\asymp \pi_{m,n}(t)$ for every $n \ge m \ge 4$ and $t\in[0,\tau]$.
\end{itemize}
\begin{proof}[Proof of Proposition \ref{prop:quasi_multi}]
We first note that, by these three items, it is sufficient to prove the quasi-multiplicativity property Proposition \ref{prop:quasi_multi} in the specific case where $n \ge 16m \ge 16^2k$ and $k$ is large enough. So let $k,m,n$ that satisfy these hypotheses and let $(\sigma_s)_{s\ge 0}$ be the Glauber dynamics in $\Lambda_{2n}$ starting from $\sigma\sim\mu_{2n}$.

\medskip

Let us first prove the first inequality of Proposition \ref{prop:quasi_multi}. By Proposition~\ref{prop:mix_fkg} (and by Proposition \ref{prop:quasi-inv} with $x=o$),
\[
\pi_{k,n}(t) \le \Pro \big( \sigma,\sigma_t \in A_4(k,m/4) \cap A_4(4m,n) \big) \lesssim \pi_{k,m/4}(t)\pi_{4m,n}(t).
\]
We conclude thanks to (iii) above.

\medskip

Let us now prove the second inequality of Proposition \ref{prop:quasi_multi}. We have
\[
\pi_{k,m}(t)\pi_{m,n}(t) \le \pi_{k,m/16}(t)\pi_{16m,n}(t) .
\]
As a result, by (i) above and Proposition \ref{prop:sep} (and by Proposition \ref{prop:quasi-inv} with $x=o$), it is sufficient to prove that
\[
\Pro \big( \sigma,\sigma_t \in A_4^{\textup{sep}}(k,m/4) \big) \Pro \big( \sigma,\sigma_t \in A_4^{\textup{sep,in}}(4m,n) \big) \lesssim \pi_{k,n}(t).
\]
This can be proven like Lemma \ref{lem:ext} (but by applying Proposition \ref{prop:mix_fkg} height times rather than four). This ends the proof.
\end{proof}
\subsection{Differential inequalities for percolation events}
In this section, we prove differential inequalities for (dynamical) percolation events by using the quasi-multiplicativity property Proposition \ref{prop:quasi_multi} and the differential formula Proposition \ref{prop:diff}.
\begin{lemma}\label{lem:diff_cross}
There exist $c,C>0$ such that, for any $n \ge 1$ and any $t\in[0,\tau]$,
\[
cn^2\pi_n(t)\le -\frac{d}{dt}\Pro(\sigma,\sigma_t \in \mathsf{Cross}_n) \le C \sum_{k=1}^n \frac{k^{1+c}}{n^c} \pi_k(t),
\]
where $(\sigma_s)_{s \ge 0}$ is the Glauber dynamics in $\Lambda_{2n}$ starting from $\sigma\sim\mu_{2n}$.
\end{lemma}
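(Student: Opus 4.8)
The plan is to feed the differential formula of Proposition~\ref{prop:diff} into an estimate of the pivotal probabilities $\Pro(\sigma,\sigma_t\in\mathsf{Piv}_x(\mathsf{Cross}_n))$ in terms of the $\pi_k(t)$. Applying Proposition~\ref{prop:diff} with ambient box $\Lambda_{2n}$ and the increasing event $A=\mathsf{Cross}_n\subseteq\Omega_{2n}$, and noting that $\mathsf{Piv}_x(\mathsf{Cross}_n)=\emptyset$ for $x\notin\Lambda_n$, we get for $t\in[0,\tau]$
\[
\frac1C\sum_{x\in\Lambda_n}\Pro\big(\sigma,\sigma_t\in\mathsf{Piv}_x(\mathsf{Cross}_n)\big)\ \le\ -\frac{d}{dt}\Pro(\sigma,\sigma_t\in\mathsf{Cross}_n)\ \le\ C\sum_{x\in\Lambda_n}\Pro\big(\sigma,\sigma_t\in\mathsf{Piv}_x(\mathsf{Cross}_n)\big),
\]
so it suffices to bound this sum from below by $c\,n^2\pi_n(t)$ and from above by $C\sum_{k=1}^n\frac{k^{1+c}}{n^c}\pi_k(t)$ (we may assume $n$ large, the bounded cases being trivial).

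For the lower bound I would simply discard all but the $\asymp n^2$ sites $x\in\Lambda_n$ lying at distance at least $n/10$ from the four sides of $\Lambda_n$ and apply Corollary~\ref{cor:4arm}, which gives $\Pro(\sigma,\sigma_t\in\mathsf{Piv}_x(\mathsf{Cross}_n))\ge c\,\pi_n(t)$ for each such $x$; this yields $-\frac{d}{dt}\Pro(\sigma,\sigma_t\in\mathsf{Cross}_n)\ge c\,n^2\pi_n(t)$ at once.

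For the upper bound, fix $x\in\Lambda_n$ and set $k=k(x):=1\vee\mathrm{dist}(x,\partial\Lambda_n)$. The key geometric input, standard for crossing events in planar percolation, is that on $\mathsf{Piv}_x(\mathsf{Cross}_n)$ there is, first, a four-arm event of alternating signs from a neighbour of $x$ to $\partial\Lambda_{k/100}(x)$, and, second, using the remaining macroscopic room towards the far sides of $\Lambda_n$, a boundary arm event between scale $\asymp k$ and a scale $\asymp n$: a three-arm event in a half-plane for $x$ away from the corners, and a polychromatic two-arm event in a wedge of opening $<\pi$ near one of the (convex) corners of the rhombus. The first event is measurable with respect to $\Lambda=\Lambda_{k/100}(x)$ and the second with respect to $\Lambda_{2n}\setminus\Lambda^{(\delta)}$ for a suitable fixed $\delta>0$, so Proposition~\ref{prop:mix} decouples them for the pair $(\sigma,\sigma_t)$; moreover $\Pro(\sigma,\sigma_t\in B)\le\mu_{2n}(B)$ for every static event $B$, so the probability of the boundary event at times $0$ and $t$ is at most its static probability. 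Since the static half-plane three-arm exponent is at least $2$ (see Section~\ref{app:3arm}) and the static wedge two-arm exponents are $>1$ (the wedge being more constrained than the half-plane, whose two-arm exponent equals $1$; the needed estimate follows from the same RSW-type arguments used for the static bounds), and using quasi-invariance by translation (Proposition~\ref{prop:quasi-inv}) together with $\pi_{k/100}(t)\asymp\pi_k(t)$ from quasi-multiplicativity (Proposition~\ref{prop:quasi_multi} and the remark after it), one obtains, for some universal exponent $a>1$,
\[
\Pro\big(\sigma,\sigma_t\in\mathsf{Piv}_x(\mathsf{Cross}_n)\big)\ \lesssim\ \pi_k(t)\,(k/n)^{a}.
\]

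It then remains to sum over $x\in\Lambda_n$. Grouping sites by the dyadic value of $k(x)$, there are $\asymp n\,2^j$ sites with $k(x)\asymp 2^j$ (a frame of width $\asymp 2^j$) and $\lesssim 2^{2j}$ further sites within distance $\asymp 2^j$ of each corner; using quasi-multiplicativity once more to see that $\pi_k(t)$ is comparable throughout a dyadic block, these counts give $\sum_{x\in\Lambda_n}\Pro(\sigma,\sigma_t\in\mathsf{Piv}_x(\mathsf{Cross}_n))\lesssim\sum_{k=1}^n n\,\pi_k(t)(k/n)^{a}=\sum_{k=1}^n n^{1-a}k^{a}\pi_k(t)$. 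Finally, since $a>1$ we may pick any $c\in(0,a-1)$, and then $n^{1-a}k^{a}\le k^{1+c}n^{-c}$ for all $k\le n$ (because $n^{-(a-1-c)}\le k^{-(a-1-c)}$), which gives $-\frac{d}{dt}\Pro(\sigma,\sigma_t\in\mathsf{Cross}_n)\le C\sum_{k=1}^n\frac{k^{1+c}}{n^c}\pi_k(t)$ and completes the proof. I expect the main obstacle to be the geometric claim above together with arranging the microscopic four-arm event and the macroscopic boundary arm event to live in well-separated regions so that Proposition~\ref{prop:mix} applies cleanly (and in particular getting exponent $>1$ for the corner contribution); once this is set up, the rest is bookkeeping.
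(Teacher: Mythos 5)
Your overall skeleton is the same as the paper's: Proposition~\ref{prop:diff} reduces both bounds to a control of $\sum_x\Pro(\sigma,\sigma_t\in\mathsf{Piv}_x(\mathsf{Cross}_n))$; the lower bound then follows from Corollary~\ref{cor:4arm} applied to the $\asymp n^2$ bulk sites; and for the upper bound one extracts a four-arm event at the scale $k=\mathrm{dist}(x,\partial\Lambda_n)$ and decouples it from a boundary arm event by Propositions~\ref{prop:mix} and~\ref{prop:quasi-inv}. Where you diverge is in how the boundary contribution is controlled, and this is where there is a genuine gap.

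You bound the boundary arm event by a single exponent $a>1$ uniformly over all $x$ at distance $k$ from the boundary, obtaining the pointwise estimate $\Pro(\sigma,\sigma_t\in\mathsf{Piv}_x(\mathsf{Cross}_n))\lesssim\pi_k(t)(k/n)^a$. The worst case is a site near a corner, for which the pivotal event forces a polychromatic two-arm event in the wedge from scale $\asymp k$ to $\asymp n$, and you invoke a wedge two-arm exponent strictly greater than $1$. That strict inequality is not proven in the paper and does not follow from ``the same RSW-type arguments'': monotonicity of the wedge in the half-plane only gives that the wedge exponent is \emph{at least} the half-plane one, and the half-plane two-arm exponent is not established in the paper either (the appendix only proves the half-plane three-arm bound $\lesssim(m/n)^2$ via a cluster-counting argument, and a one-arm bound $\lesssim(m/n)^{c_0}$ via box-crossings; neither of these yields a two-arm exponent $>1$ without an extra idea). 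Moreover, your dichotomy ``far from corners: three-arm half-plane from $k$ to $n$'' versus ``near corner: two-arm wedge from $k$ to $n$'' silently skips the intermediate range where the nearest boundary point is at distance $j$ from a corner with $k\ll j\ll n$: there the half-plane picture holds only up to scale $j$ and the corner intervenes after, so a three-scale decomposition is forced.

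The paper sidesteps both issues. Writing $j$ for the distance of the foot point $y$ to the nearest corner, it decomposes $\mathsf{Piv}_x(\mathsf{Cross}_n)$ into: a four-arm event in $\Lambda_{k/2}(x)$; a three-arm half-plane event in the annulus between scales $4k$ and $j/2$ around $y$; and a \emph{one-arm} event from scale $2j$ to $2n$. Spatial mixing then gives $\Pro(\sigma,\sigma_t\in\mathsf{Piv}_x)\lesssim\pi_k(t)(k/j)^2(j/n)^{c_0}$, and the sum $\sum_{j=k}^n(k/j)^2(j/n)^{c_0}\asymp k^{1+c_0}n^{-c_0}$ (using only $c_0\in(0,2)$) produces the factor $k^{1+c_0}/n^{c_0}$ directly — the sum over $j$ is what upgrades the weak one-arm exponent $c_0>0$ to an effective exponent $1+c_0>1$, with no need for any corner two-arm estimate. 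To repair your proof you should either (i) replace the uniform $(k/n)^a$ bound by the $j$-dependent bound and sum over $j$ as the paper does, keeping only the one-arm estimate at the outermost scale, or (ii) actually prove the strict wedge two-arm bound, which would require an additional argument beyond what is in the appendix. Your remaining bookkeeping (the count of $\asymp n$ sites at each distance $k$, the use of quasi-multiplicativity across dyadic blocks, and the final comparison $n^{1-a}k^a\le k^{1+c}n^{-c}$ for $c<a-1$) is correct once a valid per-annulus bound is in hand.
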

\begin{remark}
The presence of a sum on the right hand side comes from our estimate of contributions of the boundary vertices. However (at least when $\beta<\beta_c$, which is the case that will be useful for us), one also has that $-\frac{d}{dt}\Pro(\sigma,\sigma_t \in \mathsf{Cross}_n) \le C n^2\pi_n(t)$. This is not clear at this point but could be proven to be 
 via a more careful analysis of the boundary terms. We decided to omit this argument since the proof of the lemma above is simpler and is sufficient for our purpose.
 
\end{remark}
\begin{proof}
Now that we have proven the quasi-multiplicativity property (Proposition~\ref{prop:quasi_multi}) and interpreted differential formulas in terms of pivotal events (Proposition \ref{prop:diff}), the proof of this lemma is quite standard (see e.g.\ \cite[Chapter~6]{GS14}). We first notice that the lower bound is a direct consequence of Proposition \ref{prop:diff} and Corollary~\ref{cor:4arm}. So let us prove the upper bound. By Proposition~\ref{prop:diff}, it is sufficient to show that
\[
\sum_{x \in \Lambda_n} \Pro \big( \sigma,\sigma_t \in \mathsf{Piv}_x(\mathsf{Cross}_n) \big) \lesssim \sum_{k=1}^n \frac{k^{1+c}}{n^c} \pi_k(t).
\]
To deal with points that are quite close to the boundary, we need to consider the $3$-arm event in the half plane. We let $A_3^+(m,n) \subset \Omega_{2n}$ denote this (static) event. More precisely, this is the event that there are three paths of alternating sign from $\Lambda_m$ to $\partial \Lambda_n$, that are included in the (upper, say) half plane. Moreover, we let $\pi_{m,n}^{(3)}(t)=\Pro(\sigma,\sigma_t \in A_3^+(m,n))$.
It is known that one can compute the exponent of the $3$-arm event by using the box-crossing property \eqref{eq:bxp} (and \eqref{eq:smp} and \eqref{eq:mon}). Here we just need
\begin{equation}\label{eq:3arm}
\mu_{2n}\big(A_3^+(m,n)\big) \lesssim (m/n)^2,
\end{equation}
which we prove for completness in Section \ref{app:3arm} of the appendix.

\smallskip

We also let $A_1(m,n)\subset\Omega_{2n}$ denote the (static) $1$-arm event. More precisely, this is the event that there is a $+1$ path from $\Lambda_m$ to $\partial \Lambda_n$. We let $\pi_{m,n}^{(1)}(t)=\Pro(\sigma,\sigma_t\in A_1(m,n))$. By \eqref{eq:bxp}, there exists $c_0>0$ such that
\begin{equation}\label{eq:1arm}
\mu_{2n}\big(A_1(m,n)\big) \lesssim (m/n)^{c_0}.
\end{equation}
We fix such a constant $c_0>0$. We can (and we do) assume that $c_0<1$.    

\smallskip

Now, let $x \in \Lambda_n$, let $y$ denote the point on $\partial \Lambda_n$ closest to $x$ and let $z$ denote the corner of $\Lambda_n$ closest to $y$. Moreover, let $k$ be the distance between $x$ and $y$ and $j$ be the distance between $y$ and $z$. If $x$ is pivotal for $\mathsf{Cross}_n$ in some spin configuration, then the $4$-arm event holds from $x$ to $\partial \Lambda_k(x)$, the $3$-arm event holds in the `half-plane annulus' $(\Lambda_j(y)\setminus \Lambda_{2k}(y)) \cap \Lambda_n$ and the $1$-arm event holds in the annulus $\Lambda_{2n}(y)\setminus \Lambda_{2j}(y)$. Moreover, these three subsets are disjoint. As a result, the spatial mixing property Proposition \ref{prop:mix} (and the quasi-translation invariance property Proposition~\ref{prop:quasi-inv}) imply that
\[
\Pro \big( \sigma,\sigma_t \in \mathsf{Piv}_x(\mathsf{Cross}_n) \big) \lesssim \pi_{k/2}(t) \pi^{(3)}_{4k,j/2}(t) \pi^{(1)}_{2j,2n}(t).
\]
(Remark: the lengths $k/2$, $4k$, $j/2$, $2j$ appear to leave room between the box/annuli so that the spatial mixing property can indeed be applied.)

\medskip

By the quasi-multiplicativity property Proposition \ref{prop:quasi_multi}, \eqref{eq:3arm} and \eqref{eq:1arm} (and the fact that $\pi^{(3)}_{m,n}(t) \le \mu_{2n}(A_3^+(m,n))$ and similarly for the $1$-arm event), we have
\[
\Pro \big( \sigma,\sigma_t \in \mathsf{Piv}_x(\mathsf{Cross}_n) \big) \lesssim \pi_k(t) \bigg(\frac{k}{j}\bigg)^2 \bigg(\frac{j}{n}\bigg)^{c_0}.
\]
As a result,
\[
\sum_{x \in \Lambda_n} \Pro \big( \sigma,\sigma_t \in \mathsf{Piv}_x(\mathsf{Cross}_n) \big) \lesssim \sum_{k=1}^n \sum_{j=k}^n \pi_k(t) \bigg(\frac{k}{j}\bigg)^2 \bigg(\frac{j}{n}\bigg)^{c_0} \lesssim \sum_{k=1}^n \frac{k^{1+{c_0}}}{n^{c_0}} \pi_k(t). \qedhere
\]
\end{proof}
\begin{lemma}\label{lem:diff_arm}
There exists $C>0$ such that, for any $n \ge 1$ and $t\in[0,\tau]$,
\[
0\le -\pi_n'(t) \le C \pi_n(t) \sum_{k=1}^nk\pi_k(t).
\]
\end{lemma}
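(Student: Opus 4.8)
The plan is to follow, almost verbatim, the proof of Lemma~\ref{lem:diff_cross}; the only new ingredient is the way the pivotal sites are organised according to their distance to the origin. First, the non-negativity $-\pi_n'(t)\ge 0$ is immediate from Proposition~\ref{prop:diff} applied to $A=A_4(n)$ (equivalently, from the fact recalled at the end of Section~\ref{ssec:generator} that $t\mapsto\E[1_{A_4(n)}(\sigma)1_{A_4(n)}(\sigma_t)]$ is non-increasing), so the real work is the upper bound. I would apply Proposition~\ref{prop:diff} to $A=A_4(n)\subseteq\Omega_{2n}$, for the Glauber dynamics in $\Lambda_{2n}$; since $A_4(n)$ depends only on $\Lambda_n$ it then suffices to prove that $\sum_{x\in\Lambda_n}\Pro(\sigma,\sigma_t\in\mathsf{Piv}_x(A_4(n)))\lesssim\pi_n(t)\sum_{k=1}^n k\pi_k(t)$.

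The heart of the argument is a pointwise bound on each summand. For an ``interior'' site $x$ with $r:=|x|\le n/2$, the event $\mathsf{Piv}_x(A_4(n))$ forces, as in the standard theory of four-arm events, three arm events supported on pairwise disjoint regions: a four-arm event from a neighbour of $x$ to $\partial\Lambda_{r/4}(x)$, a four-arm event from a neighbour of $o$ to $\partial\Lambda_{r/2}$, and a four-arm event in the annulus $\Lambda_n\setminus\Lambda_{2r}$. Feeding this into the spatial mixing property (Proposition~\ref{prop:mix}, used in a couple of nested annuli) and the quasi-invariance by translation (Proposition~\ref{prop:quasi-inv}, to recentre at the origin the arm event around $x$) gives $\Pro(\sigma,\sigma_t\in\mathsf{Piv}_x(A_4(n)))\lesssim\pi_{r/4}(t)\,\pi_{r/2}(t)\,\pi_{2r,n}(t)$. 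Quasi-multiplicativity (Proposition~\ref{prop:quasi_multi}) together with the uniform lower bound $\pi_{r/2,2r}(t)\ge\mu_{4r}(A_4(r/2,2r))^2\gtrsim 1$ (box-crossing~\eqref{eq:bxp} and the inequality $\Pro(\sigma,\sigma_t\in B)\ge\mu_{4r}(B)^2$ from the end of Section~\ref{ssec:generator}) then gives $\pi_{r/2}(t)\pi_{2r,n}(t)\lesssim\pi_n(t)$, and since $\pi_{r/4}(t)\asymp\pi_r(t)$ we obtain $\Pro(\sigma,\sigma_t\in\mathsf{Piv}_x(A_4(n)))\lesssim\pi_r(t)\pi_n(t)$. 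Summing over the $\asymp r^2$ interior sites with $|x|\asymp r$, grouping dyadically, and using $\pi_k(t)\asymp\pi_{2^j}(t)$ and $\sum_{k\asymp 2^j}k\asymp 2^{2j}$ for $k\asymp 2^j$, the total contribution of the interior sites is $\lesssim\pi_n(t)\sum_j 2^{2j}\pi_{2^j}(t)\asymp\pi_n(t)\sum_{k=1}^n k\pi_k(t)$, which is exactly the target bound.

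It remains to show that the sites with $|x|>n/2$ contribute no more. If moreover $\mathrm{dist}(x,\partial\Lambda_n)\ge n/10$, then $\mathsf{Piv}_x(A_4(n))$ forces two disjoint four-arm events at scale $\asymp n$ (around $x$ and around $o$), so $\Pro(\sigma,\sigma_t\in\mathsf{Piv}_x(A_4(n)))\lesssim\pi_n(t)^2$, and summing over the $\lesssim n^2$ such sites gives $\lesssim n^2\pi_n(t)^2\le\pi_n(t)\sum_{k=1}^n k\pi_k(t)$, using that $\pi_k(t)\ge\pi_n(t)$ forces $\sum_{k=1}^n k\pi_k(t)\gtrsim n^2\pi_n(t)$. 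If instead $d:=\mathrm{dist}(x,\partial\Lambda_n)<n/10$, the site lies in a boundary layer and I would argue exactly as in the boundary analysis of Lemma~\ref{lem:diff_cross}: $\mathsf{Piv}_x(A_4(n))$ forces a four-arm event around $x$ at scale $d$, a half-plane three-arm event from scale $\asymp d$ up to scale $\asymp n$, and a four-arm event from $o$ up to scale $\asymp n$; with the three-arm bound $\mu_{2n}(A_3^+(2d,n/10))\lesssim(d/n)^2$ of~\eqref{eq:3arm} this is $\lesssim\pi_d(t)(d/n)^2\pi_n(t)$, and the $\asymp n$ sites at distance $d$ from a given side contribute $\lesssim\frac{\pi_n(t)}n\sum_{d=1}^n d^2\pi_d(t)\le\pi_n(t)\sum_{k=1}^n k\pi_k(t)$; sites near the corners of $\Lambda_n$ are handled identically, now with a quarter-plane two-arm event (whose exponent is again at least $2$), and contribute even less. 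I expect this last boundary/corner bookkeeping to be the only genuinely fiddly part of the proof — one must choose the various scales so that the regions carrying the different arm events are genuinely disjoint (so that Proposition~\ref{prop:mix} applies) and keep track of the metric distortion of the rhombi — but none of it is new, as the same considerations already appear in Lemma~\ref{lem:diff_cross}. Adding up the three contributions completes the argument.
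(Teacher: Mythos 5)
Your proof is correct and follows essentially the same route as the paper: you invoke Proposition~\ref{prop:diff}, decompose the pivotal probability $\Pro(\sigma,\sigma_t\in\mathsf{Piv}_x(A_4(n)))$ into disjoint arm events around $x$, around $o$, and in the outer annulus, and combine spatial mixing, quasi-invariance, quasi-multiplicativity and the half-plane three-arm bound~\eqref{eq:3arm} exactly as the paper's sketch does, recovering the two sums $\sum_k k\pi_k(t)$ (interior, $k\asymp|x|$) and $\sum_k n(k/n)^2\pi_k(t)$ (boundary layer, $k\asymp\mathrm{dist}(x,\partial\Lambda_n)$). The only cosmetic differences are your separate treatment of sites with $|x|>n/2$ (which the paper folds into the first sum) and of corner sites (which the paper explicitly remarks need no special treatment for $A_4(n)$, though your extra verification that they contribute even less is of course harmless).
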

\begin{proof}
The proof is very similar to the proof of (the upper bound from) Lemma~\ref{lem:diff_cross}. The main difference is that the description of the pivotal events must take into account the multi-scale nature of the four arm event (see for instance \cite{Nol08} or \cite[proof of Lemmas~5.3 and 5.5]{TV23} for similar observations). Such a pivotal analysis implies that $-\pi_{m,n}'(t)$ is less than a constant times
\begin{equation}\label{eq:proof_diff_arm}
\pi_n(t) \bigg( \sum_{k=1}^n k\pi_k(t) + \sum_{k=1}^n n\pi_k(t)(k/n)^2 \bigg).
\end{equation}
We leave the details to the reader but we still describe briefly the two sums of the left-hand-side:
\begin{enumerate}
\item The first sum is the contribution of the vertices at distance $>n/10$ from $\partial \Lambda_n$. Here, $k$ can be thought of as the distance from $0$ to $x$;
\item The second sum is the contribution of the vertices that are close to $\partial \Lambda_n$. Here, $k$ can be thought of as the distance between $x$ and $\partial \Lambda_n$ and $(k/n)^2$ as the probability of a three-arm event in a half-space. (N.B: Contrary to the pivotal event for $\mathsf{Cross}_n$ -- see the proof of Lemma \ref{lem:diff_cross} --, corners do not play a special role here; this is why we obtain a bound of the kind $n\pi_k(t)(k/n)^2$ rather than $\sum_{j = k}^n\pi_k(t)(k/j)^2(j/n)^c$.)
\end{enumerate}
This ends the proof since \eqref{eq:proof_diff_arm} is less than $2\pi_n(t)\sum_{k=1}^nk\pi_k(t)$.
\end{proof}
\section{Sharp noise sensitivity for $\beta< \beta_c$}\label{sec:noise_sens}
Fix $\beta <\beta_c$ for the whole section (N.B: this implies that the `exponent' of the $4$-arm event is less than $2$, see \eqref{eq:4arm}). Let $\tau$ be the time constant as defined in Section~\ref{ssec:defin-time-insert}. 

\medskip

We now have all the intermediate results needed to adapt the proof of \cite{TV23} and prove the sharp noise sensitivity theorem (Theorem \ref{thm:sharp}). The only difference is that we have made our analysis only for times $t \in [0,\tau]$. We will use the dynamical mixing property (see Section \ref{ssec:dyn_mix}) to overcome this (as well as some monotonicity property in times). Let us finish the proof step by step, but by referring to \cite{TV23} when there is no difference at all.
\subsection{Superquadratic decay above the sensitivity length}
As in \cite{TV23}, we define the `noise sensitivity length' $\ell(t)$ as follows:
\begin{equation}\label{eq:sens_length}
\ell(t)=\min\{ n \ge 1 \, : \, n^2\alpha_n \geq 1/t \}.
\end{equation}
We refer to \cite[Sections 1.2 and 2.1]{TV23} for more discussions about this quantity. The general idea is that sharp noise sensitivity means that the percolation events are sensitive to an amount $t$ of noise at all scales $\gg \ell(t)$ but are stable under this amount of noise at all scales $\ll \ell(t)$. This is essentially the content of Theorem \ref{thm:sharp}, which is the main result of the present paper.

\medskip

In this section, we first notice that we have the following three properties, which are the analogues of Properties \textbf{(P1)}, \textbf{(P2)} and \textbf{(P3)} from \cite[Section 6]{TV23}. More precisely, there exists $C>0$ such that
\begin{itemize}
\item For every $n\ge 1$ and $0\le t\le u\le \tau$,
\begin{equation}\label{eq:P1}
  \tag{\textbf{P1}} 1\le \frac{\pi_n(t)}{\pi_n(u)}\le \exp\left(C\int_t^u\sum_{k=1}^n k \pi_k(s)ds\right);
\end{equation}
\item For every $n \ge 1$,
\begin{equation}\label{eq:P2}
  \int_0^\tau n^2 \pi_n(t) dt \le C;
\end{equation}
\item For every $n\ge m \ge 1$ and $0\le t\le u\le \tau$,
\begin{equation}\label{eq:P3}
\pi_n(u)\pi_m(t) \le C \pi_m(u)\pi_n(t).
\end{equation}
\end{itemize}
\begin{proof}
The first and second properties come from Lemmas \ref{lem:diff_arm} and \ref{lem:diff_cross} respectively. The third one is a consequence of the quasi-multiplicativity property Proposition~\ref{prop:quasi_multi} and the monotonicity of $s \mapsto \pi_{m,n}(s)$ (see the end of Section \ref{ssec:generator}):
\[
\frac{\pi_n(u)}{\pi_m(u)} \asymp \pi_{m,n}(u) \le \pi_{m,n}(t) \asymp  \frac{\pi_n(t)}{\pi_m(t)}. \qedhere
\]
\end{proof}

The following proposition is the analogue of \cite[Proposition 6.1]{TV23}, which is the main intermediate result from \cite{TV23}. 
\begin{proposition}\label{prop:superquadra}
There exist $c,C>0$ such that, for every $t \in [0,\tau]$ and every $n \ge m \ge \ell(t)$,
\[
\frac{\pi_n(t)}{\pi_m(t)} \le C\Big( \frac{m}{n} \Big)^{2+c}.
\]
\end{proposition}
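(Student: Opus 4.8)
The plan is to adapt the proof of \cite[Proposition~6.1]{TV23}, now that the three differential relations \eqref{eq:P1}, \eqref{eq:P2}, \eqref{eq:P3} are available. First I would observe that it suffices to treat the case $m=\ell(t)$: for general $n\ge m\ge\ell(t)$, let $t_m\le t$ be a time with $\ell(t_m)=m$ (which exists since $\ell$ is non-increasing and $m\ge\ell(t)$); the case $m=\ell(t_m)$ applied at time $t_m$ gives $\pi_n(t_m)/\pi_m(t_m)\le C(m/n)^{2+c}$, and \eqref{eq:P3} (which says that $s\mapsto\pi_n(s)/\pi_m(s)$ is non-increasing up to the constant $C$) transfers this bound to time $t$. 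So fix $t\in[0,\tau]$, write $L=\ell(t)$, and the goal becomes $\pi_n(t)\le C(L/n)^{2+c}\pi_L(t)$ for all $n\ge L$.

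\emph{Base estimate.} I would first show $\pi_k(s)\asymp\alpha_k$ for every $s\le\tau$ and every $k\le\ell(s)$; in particular $\pi_L(t)\asymp\alpha_L\asymp 1/(tL^2)$. Indeed, \eqref{eq:P1} between times $0$ and $s$ at scale $k$ gives $\alpha_k/\pi_k(s)\le\exp\big(C\int_0^s\sum_{j\le k}j\pi_j(r)\,dr\big)$; bounding $\pi_j(r)\le\pi_j(0)=\alpha_j$ in the integrand and using the elementary consequence of \eqref{eq:4arm} that $\sum_{j\le k}j\alpha_j\lesssim k^2\alpha_k$, the exponent is $\lesssim s\,k^2\alpha_k$, which is $O(1)$ exactly when $k\le\ell(s)$ (by the definition of $\ell$, and since $r\mapsto r^2\alpha_r$ changes by at most a bounded factor per unit step).

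\emph{Quadratic bound.} Since $s\mapsto\pi_n(s)$ is non-increasing, \eqref{eq:P2} gives $(t/2)\,n^2\pi_n(t)\le\int_{t/2}^t n^2\pi_n(s)\,ds\le C$, hence $\pi_n(t)\le C/(tn^2)$ for every $n$; combined with the base estimate this yields $\pi_n(t)/\pi_L(t)\le C(L/n)^2$ for all $n\ge L$.

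\emph{Superquadratic improvement.} Upgrading the exponent from $2$ to $2+c$ is the technical heart, carried out exactly as in \cite[Proposition~6.1]{TV23}. The extra decay is extracted from \eqref{eq:P1}: integrating the pivotal differential inequalities of Lemmas~\ref{lem:diff_cross}--\ref{lem:diff_arm} produces, at the fixed time $t$, a renewal-type inequality of the shape $n^{2+c}\pi_n(t)\lesssim\sum_{k=1}^n k^{1+c}\pi_k(t)$; one splits off the scales $k\le L$ (where $\pi_k(t)\asymp\alpha_k$ and, by \eqref{eq:4arm}, $\sum_{k\le L}k^{1+c}\alpha_k\lesssim L^{2+c}\alpha_L\asymp L^{2+c}\pi_L(t)$) and then bootstraps the inequality against the quadratic bound, using \eqref{eq:P3} and the quasi-multiplicativity property (Proposition~\ref{prop:quasi_multi}) to keep the estimates uniform across base scales. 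Heuristically the gain reflects that above the sensitivity length $L$ the configurations at times $0$ and $t$ decorrelate scale by scale, so that $\pi_{m,n}(t)\lesssim\alpha_{m,n}^2$ for $n\ge m\ge L$, which is strictly stronger than quadratic because the polychromatic four-arm exponent exceeds $1$. I expect this to be the main obstacle: a single naive application of the renewal inequality only reproduces exponent $2$, so obtaining a strict improvement requires iterating the three differential relations carefully and controlling the constants through the iteration, exactly as in \cite{TV23}.
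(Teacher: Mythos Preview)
Your approach is the same as the paper's: both defer to \cite[Section~6]{TV23} once \eqref{eq:P1}, \eqref{eq:P2}, \eqref{eq:P3} and \eqref{eq:4arm} are in hand. Your reduction to $m=\ell(t)$ via \eqref{eq:P3}, the base estimate $\pi_k(s)\asymp\alpha_k$ for $k\le\ell(s)$, and the quadratic bound from \eqref{eq:P2} are all correct and useful elaborations of what the paper leaves implicit.

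However, your sketch of the superquadratic improvement is misleading on two points. First, the ``renewal-type inequality'' $n^{2+c}\pi_n(t)\lesssim\sum_{k\le n}k^{1+c}\pi_k(t)$ (which does follow from combining the two sides of Lemma~\ref{lem:diff_cross}) does \emph{not} bootstrap: plugging the quadratic bound into the right-hand side for $k>L$ gives $\sum_{L<k\le n}k^{1+c}(L/k)^2\pi_L(t)\asymp L^2 n^{c}\pi_L(t)$, which only reproduces exponent~$2$. Iterating does not help either, since the same exponent is returned at every step. The actual mechanism in \cite{TV23} exploits \eqref{eq:P1}--\eqref{eq:P3} jointly as functions of \emph{time} (not just at the single time $t$), and this is where the gain comes from. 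Second, your heuristic that $\pi_{m,n}(t)\lesssim\alpha_{m,n}^2$ would require the four-arm exponent to exceed $1$; the paper never establishes this (only that it is strictly below $2$, via \eqref{eq:4arm}), and it is not the route taken in \cite{TV23}. Since you correctly defer the formal argument to \cite{TV23} and flag it as the main obstacle, these are not gaps in your proof, but you should not rely on either heuristic if you ever need to unpack that step.
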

\begin{proof}
Thanks to the three properties stated just before the proposition (and thanks to \eqref{eq:4arm}), the proof is exactly the same as in \cite[Section 6]{TV23} (Remark: The fact that the times are less than $\tau$ in these three properties does not change anything at all to the proof).
\end{proof}
\subsection{Proof of sharp noise sensitivity}
We can now prove the sharp noise sensitivity theorem. Recall that $\alpha_n=\mu_{2n}(A_4(n))$ and $\varepsilon_n=1/(n^2\alpha_n)$.
\begin{proof}[Proof of Theorem \ref{thm:sharp}]
Convention about the constants: In this proof, $c>0$ is a constant that may depend only on $\beta$ and that can change from one side of an inequality to the other.
Let $(\sigma_s)_{s\ge 0}$ denote the Glauber dynamics in $\Lambda_{2n}$ starting from $\sigma\sim\mu_{2n}$. 
 Let us first concentrate on the second part of the theorem, which is a consequence of Lemma~\ref{lem:diff_cross}. Indeed, if we assume that $t_n/\varepsilon_n \rightarrow 0$, then by this lemma we have
\begin{align}\label{eq:calcul_4arm}
0 \le 1/2-\Pro(\sigma,\sigma_t \in \mathsf{Cross}_n ) &\lesssim \int_0^{t_n}\sum_{k=1}^n \frac{k^{1+c}}{n^c}\pi_k(s)ds \nonumber\\
&\le t_n\sum_{k=1}^n \frac{k^{1+c}}{n^{c}}\alpha_k\nonumber\\
& \overset{\eqref{eq:4arm}}{\lesssim} t_n\alpha_n\sum_{k=1}^n \frac{k^{1+c}}{n^{c}}\Big(\frac{n}{k}\Big)^{2-c} \lesssim t_nn^2\alpha_n \underset{n\rightarrow +\infty}{\longrightarrow} 0.
\end{align}
Let us now concentrate on the first part of Theorem \ref{thm:sharp}, which is the main result of this paper and is a consequence of Lemma \ref{lem:diff_cross}, Proposition~\ref{prop:superquadra} and the dynamical mixing property \eqref{eq:dyn_mix}. Consider a sequence $(t_n)_n$ such that $t_n/\varepsilon_n\rightarrow +\infty$. By \eqref{eq:dyn_mix},
\begin{align*}
&\big| \Pro(\sigma,\sigma_t \in \mathsf{Cross}_n ) - 1/4 \big|\\
& \le \int_{t_n}^{\log n} -\frac{d}{ds} \Pro(\sigma,\sigma_s \in \mathsf{Cross}_n ) ds + n^{-c}\\
& \le \int_{t_n}^\tau -\frac{d}{ds} \Pro(\sigma,\sigma_s \in \mathsf{Cross}_n ) ds -\log n \frac{d}{dt} \Pro(\sigma,\sigma_t \in \mathsf{Cross}_n )_{|t=\tau} + n^{-c},
\end{align*}
because $t \mapsto \Pro(\sigma,\sigma_t \in \mathsf{Cross}_n )$ is convex (see the paragraph at the end of Section~\ref{sec:infin-gener}). By Lemma \ref{lem:diff_cross} and Proposition \ref{prop:superquadra} (applied to $t=\tau$ and $m=\ell(\tau)$), we have
\[
-\frac{d}{dt} \Pro(\sigma,\sigma_t \in \mathsf{Cross}_n )_{|t=\tau} \lesssim \sum_{k=1}^n \frac{k^{1+c}}{n^c}\pi_k(\tau) \lesssim \sum_{k=1}^n \frac{k^{1+c}}{n^{c}}k^{-(2+c)} \lesssim n^{-c}.
\]
(Remember that the constant $c$ can change from one side of an inequality to the other.)

As a result, and by using Lemma \ref{lem:diff_cross} once again, it is sufficient to prove that
\[
\int_{t_n}^\tau \sum_{k=1}^n \frac{k^{1+c}}{n^{c}} \pi_k(s)ds \underset{n\rightarrow +\infty}{\longrightarrow} 0.
\]
We show this as follows (our convention is that $\sum_{k=\ell(s)}^n$ equals $0$ if $n<\ell(s)$; we refer to the three remarks (a)--(b)--(c) just below for details about this computation):
\begin{align}\label{eq:last_term_of_the_paper}
\int_{t_n}^\tau \sum_{k=1}^n \frac{k^{1+c}}{n^{c}} \pi_k(s)ds  &\lesssim \int_{t_n}^{\tau} ds \bigg( \sum_{k=1}^{\ell(s)} \frac{k^{1+c}}{n^{c}} \pi_k(s) + \sum_{k=\ell(s)}^{n} \frac{k^{1+c}}{n^{c}} \pi_k(s) \bigg)\nonumber\\
& \hspace{-0.47cm}\overset{\textup{Prop. \ref{prop:superquadra}}}{\lesssim} \int_{t_n}^{\tau} ds \bigg( \sum_{k=1}^{\ell(s)} \frac{k^{1+c}}{n^{c}} \alpha_k + \sum_{k=\ell(s)}^{n} \frac{k^{1+c}}{n^{c}} \alpha_{\ell(s)} \Big( \frac{\ell(s)}{k} \Big)^{2+c} \bigg) \nonumber\\
& = \int_{t_n}^{\tau} ds \bigg( \frac{\ell(s)^c}{n^c} \sum_{k=1}^{\ell(s)} \frac{k^{1+c}}{\ell(s)^{c}} \alpha_k + \sum_{k=\ell(s)}^{n} \frac{k^{1+c}}{n^{c}} \alpha_{\ell(s)} \Big( \frac{\ell(s)}{k} \Big)^{2+c} \bigg) \nonumber\\
& \lesssim \int_{t_n}^{\tau} \frac{\ell(s)^{2+c} \alpha_{\ell(s)}}{n^c} ds.
\end{align}
Let us make three remarks about this computation: (a) In the second inequality we have also used that $\pi_k(s)\le \alpha_k$; (b) In the last inequality we have done the same computation as in \eqref{eq:calcul_4arm}; (c) It is important to remember that the constant $c$ can change from one side of an inequality to the other.

\smallskip

By definition of $\ell(s)$, we have $(\ell(s)-1)^2\alpha_{\ell(s)-1}\le\frac1 s \le \ell(s)^2\alpha_{\ell(s)}$, hence by the finite-energy property of static  Ising model, we get
\begin{equation}
  \label{eq:presque_defi_ell}
  \ell(s)^2\alpha_{\ell(s)}\asymp \frac 1s.
\end{equation}
Moreover, $\ell(\varepsilon_n)\asymp n$ by \eqref{eq:4arm}. As a result, \eqref{eq:last_term_of_the_paper} is of the same order as
\begin{equation}\label{eq:really_last_term_of_the_paper}
\int_{t_n}^\tau \frac{1}{s} \frac{\ell(s)^c}{\ell(\varepsilon_n)^c} ds.
\end{equation}
We now use that
\begin{equation}\label{eq:pol_ell}
\forall t \le s  \quad  \frac{\ell(s)}{\ell(t)} \lesssim \left(\frac t s\right) ^c.
\end{equation}
This can be proven for instance with $c=1/2$ by using \eqref{eq:presque_defi_ell} together with the fact that $\alpha_{\ell(t)}\le\alpha_{ \ell(s)}$. If we apply this to $t=\varepsilon_n$, we obtain that \eqref{eq:really_last_term_of_the_paper} is, up to constant, at most
\[
\int_{t_n}^{\tau} \frac{1}{s} \left( \frac{\varepsilon_n}{s} \right)^c ds \lesssim \left( \frac{\varepsilon_n}{t_n} \right)^c,
\]
which goes to $0$ as $n$ goes to $+\infty$ if $t_n/\varepsilon_n\rightarrow+\infty$.
\end{proof}
\appendix
\section{Appendix}
\subsection{The Ising model with boundary conditions}\label{sec:app_bc}
In this appendix, we recall the definition of the Ising model in a finite set with some boundary conditions $\xi$.
\begin{definition}\label{dfi:gen_ising}
Let $V$ be a finite subset of $\T$, let $E(V)$ denote the set of edges whose endpoints are both in $V$ and let $\xi$ be a boundary condition on $V$, i.e.\ $\xi \in \{-1,0,+1\}^{\partial_{\mathrm{ext}} V}$. The energy of a configuration $\eta\in\Omega_V:=\{-1,+1\}^V$ is defined by
\[
H^\xi(\eta)=-\bigg\{\sum_{\{x,y\}\in E(V)} \eta(x)\eta(y) + \sum_{x \in V, y \notin V \, : \, x \sim y} \eta(x)\xi(y) \bigg\}.
\]
Given some $\beta\in(0,+\infty)$, the Ising measure at inverse temperature $\beta$ on $V$ with boundary condition $\xi$ is the probability measure $\mu_V^\xi=\mu_{V,\beta}^\xi$ on $\Omega_V$ defined by
\[
\forall \eta\in\Omega_V, \quad \mu_V^\xi(\eta)=\frac{1}{Z_V^\xi}\exp\big(-\beta H^\xi(\eta)\big),
\]
where $Z_V^\xi=Z_{V,\beta}^\xi$ is the renormalizing constant $\sum_{\eta\in\Omega_V}\exp(-\beta H^\xi(\eta))$. We let $\langle\cdot\rangle_V^\xi$ denote the corresponding expectation.
\end{definition}
The critical inverse temperature is
\[
\beta_c = \sup \{ \beta \in (0,+\infty) : \, \<{\eta(o)}^+_{\Lambda_n} \rightarrow 0 \},
\]
where $o$ is the origin $(0,0)$ and $\langle\cdot\rangle_V^+=\langle\cdot\rangle_V^\xi$ with $\xi \equiv +1$. It is known that $\beta_c\in(0,+\infty)$ (see e.g.\ \cite{FV17}).
\subsection{Static percolation: the box-crossing property and two consequences}
\subsubsection{The box-crossing property}\label{sec:app_box}
In this section, we prove the box-crossing property \eqref{eq:bxp}, i.e.\ we prove that if $\beta\le \beta_c$, then for every $\rho>0$ and $\delta>0$, there exists $c_{\rho,\delta}>0$ such that, for every $n\ge 1$,
\[
\mu_{\Lambda^{(\delta)}_{\rho n,n}}^-(\mathsf{Cross}_{\rho n, n}) \ge c_{\rho,\delta},
\]
where $\Lambda^{(\delta)}_{\rho n,n}$ is the thickened elongated rhombus $\Lambda_{(1+\delta)\rho n,(1+\delta)n}$.
\begin{proof}[Proof in the case $\beta=\beta_c$]
When $\beta=\beta_c$, this is a consequence of the coupling between the Ising model (with $-1$ boundary conditions) and the FK--Ising model (with wired boundary conditions). We refer for instance to \cite{Gri06} for the definition of the FK--Ising model (also called random cluster model of parameter $q=2$) and for details about the coupling between Ising and FK--Ising. The following short proof is written as if the reader were familiar with this coupling.

To prove the result, we rely on the fact that the box-crossing-property (i.e.\ the analogue of \eqref{eq:bxp}) holds for critical FK--Ising percolation,  see e.g.\ \cite{DT19}. This implies that, for the FK--Ising model on $\Lambda_{\rho n,n}^{(\delta)}$ with wired boundary conditions, with non-negligible probability there exists a cluster that crosses $\Lambda_{\rho n,n}$ but does not touch the boundary of $\Lambda^{(\delta)}_{\rho n,n}$. If this cluster exists and that the sign $+1$ is assigned to it (which happens with conditional probability $1/2$), then the crossing event holds for the Ising model. This implies the desired result.
\end{proof}
\begin{proof}[Proof in the case $\beta<\beta_c$]
By the general Russo--Seymour--Welsh theory developed in \cite{KT23}, the result holds if the probability measure is replaced by the (unique) infinite volume Gibbs measure at inverse temperature $\beta$. Indeed, this probability measure satisfies the FKG inequality, is invariant under the symmetries of the triangular lattice, and the probability of $\mathsf{Cross}_n$ equals $1/2$ under this measure; as a result, \cite{KT23} implies the box-crossing property under this measure, see Theorem~1 therein.\footnote{The only difference is that \cite{KT23} considers measures on the edge configurations of $\Z^2$ that are invariant under the symmetries of $\Z^2$, but the proof in the case of site configurations on the triangular lattice is exactly the same.} By \eqref{eq:smp} and \eqref{eq:mon}, this implies the box-crossing property under $\mu_{\Lambda^{(\delta)}_{\rho n,n}}^+$. The following exponentially fast spatial mixing result ends the proof.
\end{proof}
\paragraph{Exponentially fast spatial mixing.} Assume that $\beta<\beta_c$. Then, there exists $C>0$ such that the following holds for every finite set $V\subset\T$ and every $k \ge 1$. Let $W$ be the set of vertices at (Euclidean) distance less than $k$ from $W$. Then,
\[
\sup_A \big| \mu_W^+(A) - \mu_W^-(A) \big| \le C|V|e^{-ck},
\]
when the supremum is on every $A \subseteq \Omega_W=\{-1,+1\}^W$ that depends only on $V$.
\begin{proof}
This result is (for instance) a consequence of (a) the coupling between the random cluster model of parameter $q=2$ and the Ising model (see e.g.\ \cite{Gri06}) and (b) the sharpness of the phase transition of the random cluster model (see e.g.\ \cite[Proposition~16]{DT19}). We leave the details of the proof to the reader.
\end{proof}
\begin{remark}\label{rem:fractal}
The box-crossing property is most likely true with $-1$ boundary conditions on $\Lambda_{\rho n,n}$, and even more generally for quads with `fractal' boundary (see \cite{DMT20} for such a result for random cluster models) but we have chosen to only prove \eqref{eq:bxp}, for simplicity and because it is enough for our purpose.
\end{remark}

\subsubsection{The $3$-arm event in the half-plane}\label{app:3arm}

In this section, we consider some $\beta\le\beta_c$. For every $n \ge m \ge 1$, let $A_3^+(m,n) \subset \Omega_{2n}$ denote the (static) $3$-arm event in the half-plane, that is, the event that there exist three paths of alternating sign from $\partial \Lambda_m$ to $\partial \Lambda_n$, that are included in the intersection of the annulus $\Lambda_n\setminus\Lambda_{m-1}$ with the upper-half-plane. It is quite standard that box-crossing properties and spatial Markov properties imply that the exponent of this event equals $2$. In this paper, we only use that the exponent is at least two, which is the following result, that we prove for completeness.
\begin{lemma}
There exists $C>0$ such that, for every $n \ge m \ge 1$,
\[
\mu_{2n}\big(A_3^+(m,n) \big)\le C\Big(\frac{m}{n}\Big)^2.
\]
\end{lemma}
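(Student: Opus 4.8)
The plan is to prove the equivalent statement that the polychromatic half-plane $3$-arm event has exponent at least $2$. This is a classical arm-exponent estimate, and the point is that all the inputs it needs are available here: the box-crossing property \eqref{eq:bxp}, the spatial Markov property \eqref{eq:smp}, monotonicity in boundary conditions \eqref{eq:mon}, the FKG inequality \eqref{eq:fkg}, and the two-sided spatial-mixing and quasi-translation-invariance estimates \eqref{eq:stat_mix_used} and \eqref{eq:stat_mix_box}. So I would follow the usual route (see \cite{Nol08} for the analogous argument in Bernoulli percolation, which transfers once these inputs are in place), adapting only the steps where the lack of monotonicity of $A_3^+$ forces a change.

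First I would record the (easy direction of the) quasi-multiplicativity estimate for the half-plane $3$-arm event: for dyadic scales $m=\rho_0<\rho_1<\dots<\rho_J=n$ with $\rho_{i+1}\asymp 2\rho_i$, one has $\mu_{2n}(A_3^+(m,n))\le C\prod_i \mu_{2\rho_{i+1}}(A_3^+(\rho_i,\rho_{i+1}'))$ for slightly shrunk scales $\rho_{i+1}'$. This follows from the inclusion $A_3^+(m,n)\subseteq\bigcap_i A_3^+(\rho_i,\rho_{i+1}')$ and the decoupling estimate \eqref{eq:stat_mix_used} applied scale by scale; here it is important that \eqref{eq:stat_mix_used} is two-sided and holds for arbitrary (non-monotone) events, since one cannot use \eqref{eq:mon} as one would for a monotone event. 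One also uses \eqref{eq:stat_mix_box} to translate the arm events so that they are centred as in the definition of $\pi_{m,n}$.

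The exponent-$2$ bound then comes from the standard global (counting) argument: one bounds by a constant the expected number of half-plane $3$-arm centres situated along the boundary line, at a fixed auxiliary scale, using \eqref{eq:bxp}, \eqref{eq:smp} and \eqref{eq:fkg} together with a bootstrap on the number of arms (three alternating arms reaching a given scale which are clustered near the boundary force extra arms, hence a rarer event), and then one feeds this back through the quasi-multiplicativity estimate above to obtain $\mu_{2n}(A_3^+(m,n))\le C(m/n)^2$. I expect this last step to be the genuine obstacle: a single dyadic half-annulus only contributes a factor bounded away from $1$ (a $+$ or $-$ barrier produced by \eqref{eq:bxp} blocks three arms with positive probability uniformly in the scale), not a factor of order $(\rho_i/\rho_{i+1})^2$, so the improvement from exponent $>0$ to exponent $2$ cannot be obtained scale by scale and really requires the global counting input; the non-monotonicity of $A_3^+$ is a secondary nuisance, handled throughout by invoking \eqref{eq:stat_mix_initial}/\eqref{eq:stat_mix_used} in place of FKG/monotonicity arguments.
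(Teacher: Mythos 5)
Your proposal and the paper's proof diverge in a way that leaves a genuine gap at the decisive step.

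The paper's argument is a single-pass counting argument, with no quasi-multiplicativity of the $3$-arm event at all. It first strengthens $A_3^+(m,n)$ (at a $O(1)$ multiplicative cost, via \eqref{eq:bxp}, \eqref{eq:smp}, \eqref{eq:mon}) to an event $\widetilde A_3^+(m,n)$ that additionally requires the $+1$ arm to be pinned to the horizontal axis inside $\Lambda_{2m}$, and the two $-1$ arms to be joined by a $-1$ path in $\Lambda_{4m}\setminus\Lambda_{2m}$ escaping the half-plane. The key counting is then over a genuinely two-dimensional grid $D$ of $\asymp (n/m)^2$ points in $\Lambda_{n/4}$, with disjoint boxes $\Lambda_{4m}(x)$: the extra structure in $\widetilde A_3^+$ guarantees that the events $\widetilde A_3^+(x;m,n)$ at distinct $x\in D$ produce distinct $+1$ clusters crossing the annulus $\Lambda_{n/2}\setminus\Lambda_{n/3}$, and the expected number of such clusters is $O(1)$ (a standard consequence of \eqref{eq:bxp}). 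Combined with the quasi-translation invariance \eqref{eq:stat_mix_box}, this yields $|D|\cdot\mu_{2n}(\widetilde A_3^+(m,n))\lesssim 1$, i.e.\ the exponent-$2$ bound in one step.

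Your counting, by contrast, is over the $\asymp n/m$ boundary points along a single line; this can at best give $\mu_{2n}(A_3^+(m,n))\lesssim m/n$, which is exponent $1$, not $2$. You try to make up the missing factor by a ``bootstrap on the number of arms,'' but this is not spelled out, and I do not see how clustering near the boundary forces extra arms in a way that closes the gap; you yourself flag this step as the genuine obstacle. The quasi-multiplicativity preamble in your write-up also cannot rescue this, since it is only a sub-multiplicative recombination of the same quantity and cannot upgrade exponent $1$ to exponent $2$. The missing idea is precisely the two-dimensional grid together with the reinforced event $\widetilde A_3^+$ (attachment to the horizontal axis plus the $-1$ cap behind) which injects grid points into disjoint crossing clusters; that is what buys the factor $(m/n)^2$ directly.
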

\begin{proof}
We first notice that it is sufficient to consider the case $n \ge 100m$ and assume that this is indeed the case.
We let $\widehat{A}_3^+(m,n)$ denote the same event as $A_3^+(m,n)$ except that (a) we ask that the signs of the three paths are, when we go from left to right, $-$, $+$ and $-$ (in the definition of $A_3^+(m,n)$, this could also be $+$, $-$, and $+$) and (b) we just ask the two $-1$ paths to go from $\partial\Lambda_{2m}$ (rather than $\partial \Lambda_m$) to $\partial\Lambda_n$. We note that $\mu_{2n}(\widehat{A}_3^+(m,n))\ge \frac{1}{2}\mu_{2n}(A_3^+(m,n) )$.

\smallskip

We let $\widetilde{A}_3^+(m,n)$ denote the event $\widehat{A}_3^+(m,n)$ with the further property that there is a realization $\gamma_1$ of the $+1$ path and a realization $\gamma_0,\gamma_2$ of the two $-1$ paths such that (a) $\gamma_1$ is connected to the horizontal axis $\Z \times \{0\}$ by a $+1$ path included in $\Lambda_{2m}$ and (b) $\gamma_0$ is connected to $\gamma_2$ by a $-1$ path included in $\Lambda_{4m}\setminus \Lambda_{2m}$ (N.B: contrary to all the other paths in this definition, this path must escape the upper half-plane).

\smallskip

By using \eqref{eq:bxp}, \eqref{eq:smp} and \eqref{eq:mon}, one can prove that
\[
\mu_{2n}\big(\widetilde{A}_3^+(m,n)\;\big|\;\widehat{A}_3^+(m,n)\big) \gtrsim 1
\]
(we leave the details to the reader).

\smallskip

Next, for every $x \in \T$, we let $\widetilde{A}_3^+(x;m,n)$ denote the event $\widetilde{A}_3^+(m,n)$ but centered at $x$ instead of the origin $o$ and we let $D$ denote the set $\Lambda_{n/4} \cap \{k+\ell e^{i\pi/3}:(k,\ell) \in (8m+1)\Z^2\}$ (whose cardinality is of order $(n/m)^2$). Note that, if $x,y \in D$ are distinct, then $\Lambda_{4m}(x)\cap\Lambda_{4m}(y)=\emptyset$. Moreover, note that $\sum_{x\in D}1_{\widetilde{A}_3^+(x;m,n)}$ is at most the number of $+1$ clusters that cross the annulus $\Lambda_{n/2}\setminus\Lambda_{n/3}$ in the configuration restricted to $\Lambda_{n/2}$. By \eqref{eq:bxp} (and \eqref{eq:smp} and \eqref{eq:mon}), the expected number of such clusters (under $\mu_{2n}$) is uniformly bounded (see e.g.\ the arguments in \cite[Lemma A.2]{SS10}).
As a result, $\sum_{x\in D}\mu_{2n}(\widetilde{A}_3^+(x;m,n)) \lesssim 1$. The quasi-invariance by translation property~\eqref{eq:stat_mix_box} then implies that $|D|\mu_{2n}(\widetilde{A}_3^+(m,n))\lesssim 1$, which implies the desired result.
\end{proof}
\subsubsection{The `exponent' of the $4$-arm event is less than $2$}\label{sec:app_4arm}
In this section, we prove \eqref{eq:4arm}, i.e.\ we prove that if $\beta<\beta_c$ then there exists $c>0$ such that, for every $n\ge m \ge 1$,
\[
\frac{\alpha_n}{\alpha_m} \ge c\Big(\frac{m}{n}\Big)^{2-c}.
\]
\begin{proof}
We first note that it suffices to find $C_0>0$ such that \eqref{eq:4arm} holds for every $m,n\ge 1$ such that $m \ge C_0\log n$. So let $C_0>0$ to be chosen later and let $m,n\ge 1$ such that $m \ge C_0\log n$. We are inspired by \cite[Section~2.5 of the introduction]{Tas14}.
We tile $\Lambda_n$ with $\asymp (n/m)^2$ copies of $\Lambda_m$ and we let $(R_i)_{i\in I}$ denote the family of all those rhombii which are distance $>n/4$ from the sides of $\Lambda_n$. For every $i\in I$ we let $R_i'$ denote the rhombus whose center is the same $R_i$ and whose sides are twice larger. Moreover, for every $i\in I$,
\begin{itemize}
\item We let $\widetilde{A}_4(R_i',n)$ denote the event that there are four disjoint paths $\gamma_1,\dots,\gamma_4$ from $R_i'$ to $\partial \Lambda_n$, in counterclockwise order, whose endpoint belongs to the left, bottom, right and top side of $\Lambda_n$ respectively and such that $\gamma_j$ is made of $+1$ spins (resp.\ $-1$ spins) if $j$ is odd (resp.\ even);
\item We let $\widetilde{A}_5(R_i',n)$ denote the event that there exist $\gamma_1,\dots,\gamma_4$ as above as well as a fifth arm $\gamma_5$, made of $+1$ spins, also from $R_i'$ to $\partial \Lambda_n$, such that $\gamma_1,\dots,\gamma_5$ are still in counterclockwise order and such that the endpoint of $\gamma_5$ belongs to the top side of $\Lambda_n$ and the distance between $\gamma_1$ and $\gamma_5$ is larger than $C_0\log n$.
\end{itemize}
The desired result is a consequence of the following three observations, that hold if $C_0$ is large enough:
\begin{itemize}
\item By using the box-crossing property (and \eqref{eq:smp}, \eqref{eq:mon} and the exponentially fast spatial mixing property stated at the end of Section \ref{sec:app_box}), one can prove that $\mu_{2n}( \cup_{i\in I} \widetilde{A}_5(R_i',n) ) \gtrsim 1$.\footnote{To this purpose, one can condition on the lowest crossing of $\Lambda_n$ and on the event that this crossing is at distance $>n/3$ from the top and bottom sides of $\Lambda_n$, and then construct a $+1$ path and a $-1$ path from the top side of $\Lambda_n$ to the $C_0\log n$ neighbourhood of this lowest crossing, that are both at distance $>n/3$ from the left and right sides of $\Lambda_n$. See \cite[Section 2.5 of the introduction]{Tas14} for a similar construction, without the $C_0\log n$ buffer zone.} As a result,
\[
\sum_{i \in I} \mu_{2n}\big( \widetilde{A}_5(R_i',n) \big) \gtrsim 1.
\]
\item By conditioning on the lowest possible realization of $\gamma_1$ and the right-most realization of $\gamma_2$, and then using the box-crossing property at all scales between $m$ and $n$, one can prove that there exists $c>0$ such that\footnote{As above, we also use \eqref{eq:smp}, \eqref{eq:mon} and the exponentially fast spatial mixing property stated at the end of Section \ref{sec:app_box} -- here it is important that $\gamma_1$ and $\gamma_5$ are at distance $>C_0\log n$ from each other in the definition of $\widetilde{A}_5(R_i',n)$ to use the exponentially fast spatial mixing.}
\[
\mu_{2n}\big(\widetilde{A}_5(R_i',n) \; \big| \; \widetilde{A}_4(R_i',n) \big) \lesssim (m/n)^c.
\]
\item By the (static) quasi-translation invariance result (see Section~\ref{ssec:inv}) and the (static) separation of arms and quasi-multiplicativity properties (i.e.\ Propositions~\ref{prop:sep} and \ref{prop:quasi_multi} with $t=0$), we have
\[
\mu_{2n}\big(\widetilde{A}_4(R_i',n)\big) \asymp \alpha_n/\alpha_m. \qedhere
\]
\end{itemize}
\end{proof}
\subsection{Formulas related to the infinitesimal generator}
\subsubsection{The infinitesimal generator}\label{sec:app_gen}
In this section, we prove the formula for the infinitesimal generator from Section~\ref{ssec:generator}.
\begin{proof}
For every $t>0$ and every $x\in \Lambda_n$, we let $\mathcal{C}_t(x)$ denote the event that the clock at $x$ has rung exactly one time in the time interval $[0,t]$ (i.e.\ $\#\{Y_x\cap ([0,t]\times[0,1])\}=1$), that no other clock has rung in this time interval, and that when the clock has rung, the spin at $x$ has changed. For every $t>0$ and $\eta\in\Omega_n$, we have
\begin{align*}
\E\big[f(\sigma_t) \; \big| \; \sigma=\eta \big]-f(\eta) & = \sum_{x \in \Lambda_n} \E\big[1_{\mathcal{C}_t(x)}\big(f(\eta^x)-f(\eta) \big) \; \big| \; \sigma=\eta  \big] + O_{t\rightarrow 0}(t^2)\\
& = \sum_{x\in \Lambda_n} \Pro\big(\mathcal{C}_t(x)    \; \big| \; \sigma=\eta  \big) \big(f(\eta^x)-f(\eta)\big)+O_{t\rightarrow 0}(t^2).
\end{align*}
We conclude by observing that $\Pro(\mathcal{C}_t(x) \mid \sigma=\eta)=t c_x(\eta)+O_{t\rightarrow 0}(t^2)$.
\end{proof}
\subsubsection{A general differential formula}\label{sec:app_diff}
In this section, we prove the general differential formula stated in Section \ref{ssec:diff}, i.e.\
\[
-\frac{d}{dt}\E \big[f(\sigma)g(\sigma_t)\big] = \frac{1}{2}\sum_{x \in \Lambda_n} \E \Big[ c_x(\sigma) \big( f(\sigma^x)-f(\sigma) \big) \big( g(\sigma^{(x)}_t)-g(\sigma_t) \big) \Big],
\]
where $(\sigma_s)_{s\ge 0}$ is the Glauber dynamics starting from $\sigma\sim\mu_n$.
\begin{proof}
We let $(P_t)_{t\ge 0}$ denote the semi group of the Glauber dynamics in $\Lambda_n$, $L$ denote its infinitesimal generator (see Section \ref{ssec:generator}) and let $\langle\cdot\rangle_n$ denote the expectation with respect to $\mu_n$. We have
\begin{align*}
\frac{d}{dt}\E\big[f(\sigma)g(\sigma_t)\big] & = \frac{d}{dt}\big\langle fP_tg \big\rangle_n\\
& = \Big\{ \frac{d}{ds}\big\langle P_sfP_tg \big\rangle_n \Big\}_{|s=0},
\end{align*}
where in the second equality we used that, by reversibility of the dynamics, we have $\langle fP_{t+s}g \rangle_n=\langle P_sfP_tg \rangle_n$. As a result,
\begin{equation}\label{eq:diff2}
\frac{d}{dt}\E\big[f(\sigma)g(\sigma_t)\big] = \big\langle Lf P_t g \big\rangle_n  = \sum_{x\in\Lambda_n}\E\big[ c_x(\sigma) (f(\sigma^x)-f(\sigma)) g(\sigma_t) \big].
\end{equation}
We are now going to do an integration by parts. Fix some $x\in \Lambda_n$ and, for every $(\eta,\eta') \in \Omega_n\times\Omega_n$, let
\[
F(\eta,\eta')=c_x(\eta)(f(\eta^x)-f(\eta)) g(\eta').
\]
We first observe that (by applying the involution $\eta\mapsto\eta^x$ in the third equality):
\begin{align}\label{eq:diff3}
\E\big[ c_x(\sigma) (f(\sigma^x)-f(\sigma)) g(\sigma_t) \big] & = \E\big[F(\sigma,\sigma_t)\big]\nonumber\\
&=\sum_{\eta\in\Omega_n}\mu_n(\eta)\E\big[F(\sigma,\sigma_t)\; \big|\; \sigma=\eta\big]\nonumber\\
&= \sum_{\eta\in\Omega_n}\mu_n(\eta^x)\E\big[F(\sigma,\sigma_t)\; \big|\; \sigma=\eta^x\big]\nonumber\\
&= \sum_{\eta\in\Omega_n}\mu_n(\eta^x)\E\big[F(\sigma^x,\sigma_t^{(x)})\; \big|\; \sigma=\eta\big]\nonumber\\
&=\E\bigg[\frac{\mu_n(\sigma^x)}{\mu_n(\sigma)} F\big(\sigma^x,\sigma_t^{(x)}\big)\bigg]\nonumber\\
& = -\E\bigg[c_x(\sigma)\big(f(\sigma^x)-f(\sigma)\big) g(\sigma_t^{(x)}) \bigg]
\end{align}
where in the last equality we used that $\mu_n(\sigma^x)c_x(\sigma^x)=\mu_n(\sigma)$. As a result,
\begin{align*}
&\E\big[ c_x(\sigma) (f(\sigma^x)-f(\sigma)) g(\sigma_t) \big]\\
&\overset{\eqref{eq:diff3}}{=}\frac{1}{2} \Big\{ \E \big[ c_x(\sigma) (f(\sigma^x)-f(\sigma)) g(\sigma_t) \big] - \E \big[ c_x(\sigma) (f(\sigma^x)-f(\sigma)) g(\sigma_t) \big] \Big\}\\
& = \frac{1}{2} \E \Big[ c_x(\sigma) \big( f(\sigma^x)-f(\sigma) \big) \big( g(\sigma^{(x)}_t)-g(\sigma_t) \big) \Big].
\end{align*}
We conclude by combining the above with \eqref{eq:diff2}.
\end{proof}
\footnotesize
\bibliographystyle{alpha}
\bibliography{ref}
\end{document}